\theoremstyle{plain}
\newtheorem{thm}{Theorem}[section]
\newtheorem{proposition}[thm]{Proposition}
\newtheorem{lemma}[thm]{Lemma}
\newtheorem{corollary}[thm]{Corollary}
\theoremstyle{definition}
\newtheorem{definition}[thm]{Definition}
\newtheorem{example}[thm]{Example}
\newtheorem{remark}[thm]{Remark}
     \def\Ker{\operatorname{Ker}}
\def\Hom{\operatorname{Hom}}   \def\ri{\operatorname{ri}}
\def\Supp{\operatorname{Supp}} \def\Ann{\operatorname{Ann}}
\def\HF{\operatorname{HF}}     \def\HP{\operatorname{HP}}
\def\rank{\operatorname{rank}} 
\newcommand{\bbQ}{\ensuremath{\mathbb Q}}
\newcommand{\bbZ}{\ensuremath{\mathbb Z}}
\newcommand{\bbN}{\ensuremath{\mathbb N}}
\newcommand{\bbP}{\ensuremath{\mathbb P}}
\newcommand{\bbX}{\ensuremath{\mathbb X}}
\newcommand{\bbY}{\ensuremath{\mathbb Y}}
\newcommand{\bbW}{\ensuremath{\mathbb W}}
\newcommand{\fq}{\mathfrak{q}}
\newcommand{\fm}{\mathfrak{m}}
\newcommand{\fC}{\mathfrak{C}}
\newcommand{\fQ}{\mathfrak{Q}}
\newcommand{\fF}{\mathfrak{F}}
\newcommand{\calO}{{\mathcal{O}}}
\newcommand{\calZ}{{\mathcal{Z}}}
\newcommand{\calA}{{\mathcal{A}}}
\newcommand{\calE}{{\mathcal{E}}}
\newcommand{\calV}{{\mathcal{V}}}
\newcommand{\calW}{{\mathcal{W}}}
\newcommand{\calC}{{\mathcal{C}}}
\newcommand{\calY}{{\mathcal{Y}}}
\def\ideal#1{\langle\, {#1}\,\rangle}
\begin{document}

\title[The K\"ahler different of a 0-dimensional scheme]
{The K\"ahler different of a 0-dimensional scheme}

\author{Le~Ngoc~Long}
\address[Le~Ngoc~Long]{
Department of Mathematics,
University of Education - Hue University\\
34 Le Loi, Hue, Vietnam
\newline
\hspace*{.5cm} \textrm{and}
Fakult\"{a}t f\"{u}r Informatik und Mathematik \\
Universit\"{a}t Passau, D-94030 Passau, Germany }
\email{lelong@hueuni.edu.vn}

\subjclass[2020]{Primary 13C13, 14M05, Secondary 13D40, 14M10}

\keywords{K\"ahler different, 0-dimensional scheme, 
generic position,  Cayley-Bacharach property,
complete intersection}

\date{\today}

\dedicatory{}

\commby{L.N. Long}

\begin{abstract}
Given a 0-dimensional scheme $\bbX$ in the projective $n$-space 
$\mathbb{P}^n_K$ over a field $K$, we are interested in studying 
the K\"ahler different of~$\bbX$ and its applications.
Using the K\"ahler different, we characterize the generic position and 
Cayley-Bacharach properties of $\bbX$ in several certain cases. 
When $\bbX$ is in generic position, we prove a generalized version of the
Ap\'{e}ry-Gorenstein-Samuel theorem about arithmetically Gorenstein schemes. 
We also characterize 0-dimensional complete intersections 
in terms of the K\"ahler different and the Cayley-Bacharach property. 
\end{abstract}
\maketitle

\section{Introduction}

Let $K$ be a field, and let $\bbX$ be a non-empty 0-dimensional subscheme 
$\bbX$ of the projective $n$-space $\bbP^n_K$ over $K$.
The homogeneous coordinate ring of $\bbP^n_K$ is the polynomial ring 
$P=K[X_0,...,X_n]$ equipped with the standard grading, and the homogeneous
vanishing ideal of $\bbX$ in~$P$ is denoted by~$I_\bbX$. Then
the homogeneous coordinate ring of $\bbX$ is given by~$R=P/I_\bbX$.
Once and for all, we assume that the characteristic of $K$ satisfies 
either ${\rm char}(K)=0$ or ${\rm char}(K)>\deg(\bbX)$. 
Consequently, the coordinates of $\bbP^n_K$ are chosen such
that no closed point of $\bbX$ lies on the hyperplane 
$\calZ(X_0)$ and every reduced point of $\bbX$ is also smooth.
In particular, the image $x_0$ of $X_0$ in $R$ is a non-zerodivisor,
$\deg(\bbX)=\dim_K(R/\ideal{x_0-1})$, and 
$R$ is a graded-free $K[x_0]$-algebra of rank $\deg(\bbX)$.
The purpose of this paper is to study the algebraic structure of 
the K\"ahler different $\vartheta_\bbX$ of $\bbX$ 
(or of the algebra $R/K[x_0]$), and then use it to look at geometrical
properties of $\bbX$ such as generic position, Cayley-Bacharach 
and complete intersection properties. 
The K\"ahler different $\vartheta_\bbX$ is known as the initial 
Fitting ideal of the module of K\"ahler differentials of $R/K[x_0]$. 
It is a homogeneous ideal of $R$ generated by all maximal minors of the
Jacobian matrix of a homogeneous system of generators of $I_\bbX$. 

The K\"ahler different is a classical algebraic invariant which has been
studied in different contexts. Many structural properties
of an algebra are encoded in this invariant (see for instance 
\cite{EU2019, DK1999, IT2021, Kun1986, KLL2015, KL2017, Noe1950, SS1975}).
In \cite{Kun1960}, Kunz showed that the K\"ahler different is a subideal
of the Noether different and these differents agree with 
the Dedekind different for reduced complete intersections.
These three differents have resurfaced in the work of 
Eisenbud and Ulrich \cite{EU2019} on residual intersections.
Recently, Iyengar and Takahashi \cite{IT2021} have used the 
K\"ahler different to study annihilators of cohomology. 

In this paper, we initiate our study by determining the Hilbert polynomial
of the K\"ahler different $\vartheta_\bbX$ and giving bounds 
for its regularity index.
It turns out that one can detect a 0-dimensional locally complete 
intersection by looking at the value of the Hilbert polynomial 
of~$\vartheta_\bbX$. 
We say that $\bbX$ is \textit{in generic position} if 
$\HF_{\bbX}(i) = \min\{\, \deg(\bbX), \binom{n+i}{n} \,\}$
for all $i \in \bbZ$. This notion was introduced by 
Geramita and Orecchia in \cite{GO1981} for finite sets of $K$-rational 
points in $\bbP^n_K$ and has been investigated in different contexts 
(see \cite{HT2004, DK1999, KR2005, KS2016}). By looking at the 
Hilbert function of the K\"ahler different, 
we provide characterizations of the generic position and 
Cayley-Bacharach properties of a reduced 0-dimensional scheme $\bbX$. 
In particular, we show that $\bbX$ is in generic position 
with $\deg(\bbX)=\binom{n+r_{\bbX}}{n}$ if and only if 
$\vartheta_\bbX = \oplus_{i\ge nr_\bbX}R_i$,
which in turn is equivalent to the condition that 
$\bbX$ is a CB-scheme and $\vartheta_\bbX = \fF^n_{\widetilde{R}/R}$,
where $r_\bbX$ is the regularity index of $\HF_\bbX$, where 
$\widetilde{R}$ is the integral closure of $R$ 
in its full quotient ring, and where $\fF_{\widetilde{R}/R}$ 
is the conductor of $R$ in $\widetilde{R}$.
The notion of a Cayley-Bacharach scheme (CB-scheme) 
has a long and rich history (see \cite{KLR2019}).
Classically, a finite set of $K$-rational points $\bbX\subseteq\bbP^n_K$ 
is called a \textit{CB-scheme} if every hypersurface of degree $r_\bbX-1$ 
which contains all points of~$\bbX$ but one automatically contains 
the last point. 
Later, this notion was generalized for 0-dimensional schemes
over an arbitrary field in \cite{KLL2019, KLR2019}. 
Examples of work on this property can be seen in  
\cite{DGO1985, GH1978, GKR1993, GMT2010, Kre1992, KLL2019, KLLT2020, KLR2019}. 
Furthermore, it is well-known that every arithmetically Gorenstein scheme
(i.e., $R$ is a Gorenstein ring) is also a CB-scheme. 
An interesting result on arithmetically Gorenstein 
schemes is the Ap\'{e}ry-Gorenstein-Samuel theorem (see \cite{Bas1963})
that states: \textit{A reduced 0-dimensional scheme $\bbX\subseteq\bbP^n_K$ 
is arithmetically Gorenstein if and only if 
$\ell(\widetilde{R}/R)=\ell(R/\mathfrak{F}_{\widetilde{R}/R})$}.
In \cite{GO1981}, Geramita and Orrechia showed that
this theorem plays an important role for describing 
the occurrence of Gorenstein singularities of finite generic sets 
of $K$-rational points in $\bbP^n_K$. One of our main results is the
following generalized version of the Ap\'{e}ry-Gorenstein-Samuel theorem. 
\begin{thm}[Theorem~\ref{thm:Apery-Gorenstein-Samual}]
	Let $\bbX\subseteq \bbP^n_K$ be a $0$-dimensional locally Gorenstein 
	scheme in generic position.
	Then $\bbX$ is arithmetically Gorenstein if and only if 
	$\ell(\widetilde{R}/R) = \ell(R/\mathfrak{F}_{\widetilde{R}/R})$.
\end{thm}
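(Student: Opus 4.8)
The plan is to descend to the one-dimensional graded Cohen--Macaulay ring $R$ (the homogeneous coordinate ring of $\bbX$) and to prove the two implications by dualizing against the graded canonical module $\omega_R$, the generic position hypothesis serving to keep the integral closure $\widetilde R$ under control. Recall that $\bbX$ is arithmetically Gorenstein precisely when $R$ is Gorenstein, i.e.\ when $\omega_R$ is free of rank one. Since $\bbX$ is locally Gorenstein, $R$ is Gorenstein at each of its minimal primes, so $\omega_R$ can be represented by a fractional ideal $\mathfrak c$ with $R\subseteq\mathfrak c\subseteq\widetilde R$; with this normalisation $\mathfrak c/R$ has finite length (its support avoids the minimal primes of $R$, where $\mathfrak c$ is already principal), $R$ is Gorenstein if and only if $\mathfrak c=R$, and $\omega_{\widetilde R}=\Hom_R(\widetilde R,\omega_R)=(\mathfrak c:_{Q(R)}\widetilde R)\subseteq\mathfrak c$ is the canonical module of $\widetilde R$. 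I shall use freely that $\Hom_R(-,\omega_R)$ is a length-preserving exact duality on finite-length $R$-modules, with $\operatorname{Ext}^1_R(M,\omega_R)$ the Matlis dual of $M$, while $\operatorname{Ext}^i_R(N,\omega_R)=0$ for $i>0$ when $N$ is maximal Cohen--Macaulay.

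For the implication ``$\bbX$ arithmetically Gorenstein $\Rightarrow$ the length equality'', suppose $R$ is Gorenstein and take $\mathfrak c=R$ (up to a degree shift, irrelevant for lengths). Then $\omega_{\widetilde R}=(R:_{Q(R)}\widetilde R)=\mathfrak F_{\widetilde R/R}$, and applying $\Hom_R(-,\omega_R)$ to $0\to R\to\widetilde R\to\widetilde R/R\to 0$ yields an exact sequence $0\to\omega_{\widetilde R}\to\omega_R\to(\widetilde R/R)^{\vee}\to 0$ (the other terms of the long exact sequence vanish by the statements just recalled). Comparing lengths gives $\ell(\widetilde R/R)=\ell(\omega_R/\omega_{\widetilde R})=\ell(R/\mathfrak F_{\widetilde R/R})$. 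This half uses neither generic position nor, beyond defining the objects, the local Gorenstein hypothesis.

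For the converse I would reduce everything to the single identity
\[
\ell(\widetilde R/R)=\ell\bigl(R/\mathfrak F_{\widetilde R/R}\bigr)+\ell(\mathfrak c/R),
\]
from which the equality $\ell(\widetilde R/R)=\ell(R/\mathfrak F_{\widetilde R/R})$ forces $\ell(\mathfrak c/R)=0$, i.e.\ $\mathfrak c=R$, i.e.\ $R$ Gorenstein. To obtain the identity, dualise the chain $\mathfrak F_{\widetilde R/R}\subseteq R\subseteq\mathfrak c\subseteq\widetilde R$ against $\mathfrak c$: this gives $\ell(\widetilde R/R)=\ell\bigl(\mathfrak c/(\mathfrak c:_{Q(R)}\widetilde R)\bigr)=\ell(\mathfrak c/\omega_{\widetilde R})$, after which it remains to identify $\omega_{\widetilde R}$, as a subideal of $Q(R)$, with the conductor $\mathfrak F_{\widetilde R/R}$; granting that, $\ell(\mathfrak c/\omega_{\widetilde R})=\ell(\mathfrak c/R)+\ell(R/\mathfrak F_{\widetilde R/R})$ and we are done. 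The identification $\omega_{\widetilde R}=\mathfrak F_{\widetilde R/R}$ exploits that $\mathfrak F_{\widetilde R/R}$ is at once an ideal of $R$ and of $\widetilde R$, together with the description of the conductor via the K\"ahler different $\vartheta_\bbX$ established earlier in the paper; this is where the hypothesis of generic position is needed, since it is what makes $\widetilde R$ (hence $\omega_{\widetilde R}$) behave as in the classical reduced situation.

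The step I expect to be the main obstacle is precisely this last identification and, underneath it, transporting the entire duality bookkeeping to the possibly non-reduced setting: when $\bbX$ is not reduced the extension $R\subseteq\widetilde R$ is considerably more delicate than in the classical Ap\'{e}ry--Gorenstein--Samuel theorem, and the generic position assumption, routed through the structural results on $\vartheta_\bbX$, is exactly the input that brings it back under control. Once that identification is available, the two displayed relations close the argument in both directions simultaneously.
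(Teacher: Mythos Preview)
Your forward implication is fine, and indeed the paper proves it by a direct Hilbert-function count once one knows that an arithmetically Gorenstein scheme is a CB-scheme with symmetric $\HF_\bbX$. The problem is the converse.

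The crux of your argument is the identity
\[
\ell(\widetilde R/R)=\ell(R/\mathfrak F_{\widetilde R/R})+\ell(\mathfrak c/R),
\]
which you obtain from $\ell(\widetilde R/R)=\ell(\mathfrak c/\omega_{\widetilde R})$ together with the identification $\omega_{\widetilde R}=\mathfrak F_{\widetilde R/R}$ as fractional ideals inside $Q(R)$. But with your normalisation $\omega_{\widetilde R}=(\mathfrak c:_{Q(R)}\widetilde R)$ while $\mathfrak F_{\widetilde R/R}=(R:_{Q(R)}\widetilde R)$; since $R\subseteq\mathfrak c$ one only has $\mathfrak F_{\widetilde R/R}\subseteq\omega_{\widetilde R}$, and equality is essentially the statement $\mathfrak c=R$, i.e.\ that $R$ is Gorenstein --- exactly what you are trying to prove. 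The appeal to ``the description of the conductor via the K\"ahler different established earlier in the paper'' does not help: the only such statement (the equivalence $\vartheta_\bbX=\mathfrak F_{\widetilde R/R}^n$ for reduced schemes in generic position with $\deg(\bbX)=\binom{n+\alpha_\bbX-1}{n}$) applies to a single degree value, is about $\vartheta_\bbX$ rather than $\omega_{\widetilde R}$, and already presupposes the CB-property. So the identification you need is not available, and without it the displayed identity --- and hence the whole converse --- collapses. (If the identity held for every locally Gorenstein $\bbX$, the converse would hold without the generic-position hypothesis, which it does not.)

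The paper's route is genuinely different and is where generic position actually enters. One first proves the purely numerical Lemma: for $\bbX$ in generic position, $r_\bbX\cdot\deg(\bbX)\ge 2\sum_{i=0}^{r_\bbX-1}\HF_\bbX(i)$. Writing $\ell(\widetilde R/R)=r_\bbX\deg(\bbX)-\sum_i\HF_\bbX(i)$ and bounding $\ell(\widetilde R/\mathfrak F_{\widetilde R/R})$ from above by $r_\bbX\deg(\bbX)$ via the explicit description $\mathfrak F_j=\langle ax_0^{\nu(a)}\rangle$, the hypothesis $\ell(\widetilde R/R)=\ell(R/\mathfrak F_{\widetilde R/R})$ forces equality throughout; this yields $\deg_\bbX(p_j)=r_\bbX$ for every $j$, i.e.\ $\bbX$ is a CB-scheme. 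A separate argument with the canonical module $\omega_R$ then produces an element $\varphi\in(\omega_R)_{-r_\bbX+1}$ with $\Ann_R(\varphi)=0$, giving $\HF_\bbX(i)\le\deg(\bbX)-\HF_\bbX(r_\bbX-1-i)$, and the already-established equality $r_\bbX\deg(\bbX)=2\sum_i\HF_\bbX(i)$ upgrades this to symmetry of $\HF_\bbX$, hence $\bbX$ is arithmetically Gorenstein. If you want to salvage a duality-style argument, you would need an independent reason --- specific to generic position --- for $\omega_{\widetilde R}$ and $\mathfrak F_{\widetilde R/R}$ to coincide inside $Q(R)$; the paper does not supply one, and I do not see one either.
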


Furthermore, we look at the question: 
\textit{if $\bbX$ is a CB-scheme, when is it a complete intersection?}
posed by Griffiths and Harris \cite{GH1978}. 
In $\bbP^2_K$, David and Maroscia \cite{DM1984} gave a possible answer 
to the question that $\bbX$ is a complete intersection if and only if 
it is a CB-scheme with symmetric Hilbert function. 
However, in the general case $n\ge 2$, the condition that $\bbX$ 
is a CB-scheme with symmetric Hilbert function is equivalent to that
$\bbX$ is arithmetically Gorenstein, as was showed in \cite{Kre1992}
for the scheme $\bbX$ over an algebraically closed field 
and in \cite{KLR2019} for $\bbX$ over an arbitrary field.
By looking more closely at the relation between 
the K\"ahler, Noether and Dedekind differents, we provide 
a positive answer to the above question for arbitrary $n\ge 1$:
\begin{thm}[Theorem~\ref{thm:CharacterizationCI-KDiff}]
Let $\bbX$ be a 0-dimensional scheme in $\bbP^n_K$. Then
the following conditions are equivalent:
\begin{enumerate}
\item[(a)] $\bbX$ is a complete intersection.
\item[(b)] $\bbX$ is a locally Gorenstein CB-scheme and 
$\HF_{\vartheta_\bbX}(r_\bbX) \ne 0$.
\item[(c)] $\bbX$ is a CB-scheme and 
$\HP_{\vartheta_\bbX} = \sum_{j=1}^s\dim_K K(p_j)$ and
$\HF_{\vartheta_\bbX}(r_\bbX) \ne 0$.
\end{enumerate}
\end{thm}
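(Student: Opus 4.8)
The plan is to prove the easy implications $(a)\Rightarrow(b)$ and $(a)\Rightarrow(c)$ by hand, and then to obtain $(b)\Rightarrow(a)$ and $(c)\Rightarrow(a)$ from a single argument. The tools are: the inclusion chain $\vartheta_\bbX\subseteq\vartheta^{(N)}_\bbX\subseteq\vartheta^{(D)}_\bbX$ between the K\"ahler, Noether and Dedekind differents of $R/K[x_0]$; the criterion (from the comparison of the three differents) that $\bbX$ is a complete intersection precisely when $\vartheta^{(N)}_\bbX=\vartheta^{(D)}_\bbX$; and the Hilbert-polynomial criterion proved earlier, namely that $\HP_{\vartheta_\bbX}=\sum_{j=1}^s\dim_K K(p_j)$ holds if and only if $\bbX$ is a locally complete intersection.

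For $(a)\Rightarrow(b)$ and $(a)\Rightarrow(c)$, write $\bbX=\calZ(f_1,\dots,f_n)$ with $\deg f_i=d_i$. The Koszul complex on $f_1,\dots,f_n$ is a self-dual resolution of $R$, so $\bbX$ is arithmetically Gorenstein and therefore both locally Gorenstein and a CB-scheme; being a local complete intersection, it satisfies $\HP_{\vartheta_\bbX}=\sum_{j=1}^s\dim_K K(p_j)$ by the criterion above. Finally $\vartheta_\bbX$ is the principal ideal generated by the Jacobian determinant $J=\det\bigl(\partial f_i/\partial X_j\bigr)_{1\le i,j\le n}$, which has degree $\sum_{i=1}^n(d_i-1)=r_\bbX$ and does not vanish because $\mathrm{char}(K)=0$ or $\mathrm{char}(K)>\deg(\bbX)$; hence $\HF_{\vartheta_\bbX}(r_\bbX)=1\ne 0$. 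This establishes (b) and (c) at once.

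For the converses, in case (b) the scheme is locally Gorenstein by assumption, while in case (c) the equality $\HP_{\vartheta_\bbX}=\sum_{j=1}^s\dim_K K(p_j)$ forces $\bbX$ to be a locally complete intersection, hence locally Gorenstein; in either case the Dedekind different $\vartheta^{(D)}_\bbX$ is defined and $\vartheta_\bbX\subseteq\vartheta^{(N)}_\bbX\subseteq\vartheta^{(D)}_\bbX$. The core claim is that a locally Gorenstein CB-scheme with $\HF_{\vartheta_\bbX}(r_\bbX)\ne 0$ must be arithmetically Gorenstein. To prove it, one analyzes the complementary module $\fC_\bbX\cong\Hom_{K[x_0]}(R,K[x_0])$: for a CB-scheme $\fC_\bbX$ is generated in its least degree $-r_\bbX$, and from the description $\vartheta^{(D)}_\bbX=\Ann_R(\fC_\bbX/R\sigma)$ (with $\sigma\colon R\to K[x_0]$ the trace) a degree count together with the minimality of the chosen generators of $\fC_\bbX$ shows that $\vartheta^{(D)}_\bbX$ lives in degrees $\ge r_\bbX$ and that $(\vartheta^{(D)}_\bbX)_{r_\bbX}\ne 0$ can occur only when $\fC_\bbX$ is principal, i.e.\ only when $\bbX$ is arithmetically Gorenstein; since $\HF_{\vartheta_\bbX}(r_\bbX)\ne 0$ and $\vartheta_\bbX\subseteq\vartheta^{(D)}_\bbX$ force $(\vartheta^{(D)}_\bbX)_{r_\bbX}\ne 0$, the claim follows. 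Granting it, $\fC_\bbX=R\,c$ with $\deg c=-r_\bbX$, so $\sigma=w\,c$ for a unique $w\in R_{r_\bbX}$ with $w\ne 0$ (as $\sigma\ne 0$ because $\mathrm{tr}(1)=\deg(\bbX)\ne 0$), and $\vartheta^{(D)}_\bbX=\langle w\rangle$. Then $(\vartheta_\bbX)_{r_\bbX}$ is a nonzero subspace of the one-dimensional space $\langle w\rangle_{r_\bbX}=Kw$, so $w\in\vartheta_\bbX$; hence $\langle w\rangle\subseteq\vartheta_\bbX\subseteq\vartheta^{(N)}_\bbX\subseteq\vartheta^{(D)}_\bbX=\langle w\rangle$, so $\vartheta^{(N)}_\bbX=\vartheta^{(D)}_\bbX$ and $\bbX$ is a complete intersection.

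The main obstacle is the core claim, i.e.\ deducing arithmetic Gorensteinness from the CB property together with $\HF_{\vartheta_\bbX}(r_\bbX)\ne 0$: this requires pinning down the graded structure of the complementary module of a CB-scheme precisely enough to control the initial degree and the degree-$r_\bbX$ component of $\vartheta^{(D)}_\bbX=\Ann_R(\fC_\bbX/R\sigma)$, and it must be carried out while the scheme may be non-reduced — which is exactly why the ``locally Gorenstein'' hypothesis is imposed in (b), and why in (c) one assumes the Hilbert-polynomial value (equivalently, that $\bbX$ is a locally complete intersection) rather than merely that $\bbX$ is a CB-scheme, since both guarantee that $\vartheta^{(D)}_\bbX$ and the inclusion chain behave well. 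A smaller point is the nonvanishing of the Jacobian determinant in $(a)\Rightarrow(b),(c)$, which again uses the assumption on $\mathrm{char}(K)$.
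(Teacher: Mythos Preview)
Your overall architecture is the same as the paper's: first establish that $\bbX$ is arithmetically Gorenstein, then upgrade to complete intersection. The ``core claim'' (a locally Gorenstein CB-scheme with $\HF_{\vartheta_\bbX}(r_\bbX)\ne 0$ is arithmetically Gorenstein) is indeed the content of the paper's Proposition~\ref{CharGorNoetDiff}, and once it is granted you correctly get $\vartheta_N=\langle w\rangle$ with $\deg w=r_\bbX$, and from $(\vartheta_\bbX)_{r_\bbX}\ne 0$ together with $\vartheta_\bbX\subseteq\vartheta_N$ you obtain $w\in\vartheta_\bbX$, hence $\vartheta_\bbX=\vartheta_N=\langle w\rangle$.

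The genuine gap is your final step. The criterion you invoke, ``$\vartheta_N=\vartheta_D$ implies $\bbX$ is a complete intersection'', is simply false: by Proposition~\ref{prop:HF-DDiff}(b) one has $\vartheta_N=\vartheta_D^\sigma$ for \emph{every} reduced locally Gorenstein scheme, complete intersection or not. Nor does principality of $\vartheta_\bbX$ suffice; the paper notes this explicitly (cf.\ the remark after the definition of complete intersection, citing \cite[Example~2.13]{KL2017}). What is actually needed, and what the paper does, is to show that the generator $w=\mu(\Delta_{\tilde\sigma})$ does \emph{not} lie in $\langle x_0\rangle$. The paper proves this by the explicit trace identity $\mu(\Delta_{\tilde\sigma})=\sum_{i=1}^d{\rm Tr}_{R/K[x_0]}(b_i)\,b'_i=d\cdot b'_1+\cdots$ and the fact that $d=\deg(\bbX)\ne 0$ in $K$; since the $\overline{b'_i}$ form a $K$-basis of $\overline{R}=R/\langle x_0\rangle$, this gives $\overline{w}\ne 0$. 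Hence $\vartheta_\bbX(\overline{R}/K)=\vartheta_\bbX/\langle x_0\rangle\ne 0$, and one concludes by Proposition~\ref{prop:cipoint-ci}(b) (the Scheja--Storch criterion), not by comparing $\vartheta_N$ with $\vartheta_D$. Your sandwich argument never produces this mod-$x_0$ nonvanishing, so the conclusion does not follow.

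A secondary issue: the inclusion $\vartheta_N\subseteq\vartheta_D^\sigma$ you use in the non-reduced case is not available. The paper's Remark~\ref{rem:N-D-DiffDiagram} shows only that $\vartheta_N$ and $\vartheta_D^\sigma$ are images of $\Ann_{R^e}(J)$ under two different maps $\Psi_{\rm Tr}\circ\Theta$ and $\Psi_\sigma\circ\Theta$, the second an isomorphism but the first in general not; this gives no containment. The equality $\vartheta_N=\vartheta_D^\sigma$ of Proposition~\ref{prop:HF-DDiff}(b) is stated only for reduced $\bbX$. So even the chain $\vartheta_\bbX\subseteq\vartheta_N\subseteq\vartheta_D^\sigma$ is not justified here, though as noted above the sandwich through $\vartheta_N$ alone already suffices to get $\vartheta_\bbX=\langle w\rangle$.
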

\noindent 
This theorem is a generalization of one of main results of~\cite{KL2017}
for smooth 0-dimensional schemes in $\bbP^n_K$. 

This paper is outlined as follows. In Section~\ref{Sec2},  
we recall the needed facts about maximal $p_j$-subschemes, 
sets of separators, and the K\"ahler different.
Especially, we determine the Hilbert polynomial of the K\"ahler different 
and bound its regularity index (see Proposition~\ref{prop_HF-Kdiff}) 
and characterize locally complete intersections (see Corollary~\ref{cor:LocallyCI}). 
In Section~\ref{Sec3}, we use the K\"ahler different
$\vartheta_\bbX$ to look at the generic position property as well 
as the Cayley-Bacharach property of $\bbX$ (see Propositions~\ref{prop:GenPos-KDiff-CBP} and \ref{propSec4.10}). 
We prove Theorem~\ref{thm:Apery-Gorenstein-Samual}, which is
a generalized version of Ap\'{e}ry-Gorenstein-Samuel theorem and 
discuss its consequence. 
In the final section, we prove a characterization of 0-dimensional
arithmetically Gorenstein schemes in terms of their Noether different 
(see Proposition~\ref{CharGorNoetDiff}) and
then prove Theorem~\ref{thm:CharacterizationCI-KDiff}.

Unless explicitly stated otherwise, we adhere to the definitions
and notation introduced in \cite{KR2005} and \cite{Kun1986}.
All examples in this paper were calculated by using
the computer algebraic system ApCoCoA~\cite{ApC}.

\medskip\bigbreak
\section{K\"ahler Differents of 0-Dimensional Schemes}
\label{Sec2}
 
Let $\bbX$ be a 0-dimensional scheme in $\bbP^n_K$, 
and let $I_\bbX\subseteq P=K[X_0,...,X_n]$ be the homogeneous vanishing 
ideal of~$\bbX$. Its homogeneous coordinate ring 
$R=P/I_\bbX$ is a 1-dimensional Cohen-Macaulay ring. 
The set of closed points of~$\bbX$ is called the {\it support}
of~$\bbX$ and denoted by $\Supp(\bbX) = \{p_1,\dots,p_s\}$.
Under the assumption of the characteristic of $K$, we always choose
the coordinates $\{X_0,...,X_n\}$ of $\bbP^n_K$ such that
$\Supp(\bbX) \cap \calZ(X_0)=\emptyset$.
By $x_i$ we denote the image of $X_i$ in $R$ for $i=0,...,n$.
Then $x_0$ is a non-zerodivisor of $R$.
The coordinate ring $S$ of $\bbX$ in the affine space 
$\mathbb{A}^n\cong D_+(X_0)$ is $S= R/\ideal{x_0-1}$, and 
the affine vanishing ideal $J_\bbX$ of $\bbX$ in $K[X_1,...,X_n]$ 
is obtained from $I_\bbX$ by dehomogenization.
The ring $S$ is a 0-dimensional affine $K$-algebra and its zero ideal 
has an irredundant primary decomposition 
$\ideal{0} = \fq_1\cap\cdots\cap\fq_s$.
Then the associated local ring of $\bbX$ at a point
$p_j \in \Supp(\bbX)$ is $\calO_{\bbX,p_j}\cong S/\fq_j$.
Its maximal ideal is denoted by $\fm_{\bbX,p_j}$, and the residue 
field of~$\bbX$ at~$p_j$ is denoted by~$K(p_j)$.
We have $S\cong \prod_{j=1}^s(S/\fq_j)\cong \prod_{j=1}^s\calO_{\bbX,p_j}$
and $\deg(\bbX) = \dim_K(S)=\sum_{j=1}^s \dim_K(\calO_{\bbX,p_j})$.

\begin{definition}
	Let $1\le j\le s$. A subscheme $\bbY\subseteq \bbX$ is called
	a \textit{$p_j$-subscheme} if we have
	$\calO_{\bbY,p_j}\ne \calO_{\bbX,p_j}$ and
	$\calO_{\bbY,p_k} = \calO_{\bbX,p_k}$ for $k \ne j$.
	A $p_j$-subscheme $\bbY\subseteq\bbX$ is called \textit{maximal}
	if $\deg(\bbY) = \deg(\bbX)-\dim_K K(p_j)$.
\end{definition}

When $\bbX$ has $K$-rational support (i.e., all points
$p_1,...,p_s$ are $K$-rational), a maximal $p_j$-subscheme
of~$\bbX$ is nothing but a subscheme $\bbY\subseteq\bbX$
of degree $\deg(\bbY)=\deg(\bbX)-1$ with
$\calO_{\bbY,p_j} \ne \calO_{\bbX,p_j}$.

Given any finitely generated graded $R$-module $M$,
the {\it Hilbert function} of $M$ is a map
$\HF_M: \bbZ \rightarrow \bbN$ given by $\HF_M(i)=\dim_K(M_i)$.
The unique polynomial $\HP_M(z) \in \mathbb{Q}[z]$ for which
$\HF_M(i)=\HP_M(i)$ for all $i\gg 0$ is called the
{\it Hilbert polynomial} of~$M$. The number
$$
\ri(M) = \min\{\, i\in\bbZ\mid \HF_{M}(j)=\HP_M(j)\
\textrm{for all} \ j\geq i \,\}
$$
is called the {\it regularity index} of $M$ (or of $\HF_M$).
Whenever $\HF_M(i)=\HP_M(i)$ for all $i\in\bbZ$,
we let $\ri(M)=-\infty.$

\begin{remark}
We also write $\HF_{\bbX}$ for $\HF_{R}$ 
and call it the Hilbert function of $\bbX$.
Its regularity index is denoted by~$r_{\bbX}.$
We have $\HF_{\bbX}(i)=0$ for $i<0$ and
$$
1=\HF_{\bbX}(0)<\HF_{\bbX}(1)<\cdots<\HF_{\bbX}(r_{\bbX}-1)< \deg(\bbX)
$$
and $\HF_{\bbX}(i)=\deg(\bbX)$ for $i\geq r_{\bbX}$.
\end{remark}

To compare the properties of $\bbX$ and its maximal subschemes, 
one can use sets of separators that are presented below 
(see also \cite{KLL2019}).
Let $Q^h(R)$ be the {\it homogeneous ring of quotients} of~$R$
defined as the localization of~$R$ with respect to the set of
all homogeneous non-zerodivisors of~$R$. By \cite[Section~1]{KLL2019},
there is an injection
\begin{equation}\label{Equa:ImathMap}
\tilde{\imath}: R \rightarrow Q^h(R) \cong
S[x_0,x_0^{-1}] = 
(\prod_{j=1}^s\calO_{\bbX,p_j})[x_0,x_0^{-1}]
\end{equation}
given by $\tilde{\imath}(f) = f^{\rm deh}x_0^i
= (f^{\rm deh}+\fq_1,...,f^{\rm deh}+\fq_s)x_0^i$,
for $f \in R_i$ with $i\ge 0$.
Also, for $i\ge r_\bbX$, the restriction map
$\tilde{\imath}|_{R_i}:R_i \rightarrow (S[x_0,x_0^{-1}])_{i}$
is an isomor\-phism of K-vector spaces. 
Consider a maximal $p_j$-subscheme $\bbY$ of $\bbX$ and 
write $I_{\bbY/\bbX}$ for the vanishing ideal of $\bbY$ in $R$ 
and $R_\bbY = R/I_{\bbY/\bbX}$. Then, by \cite[Proposition~3.2]{KL2017}, 
the ideal $I_{\bbY/\bbX}$ corresponds to one and only one ideal 
$\langle s_j\rangle$ in~$\calO_{\bbX,p_j}$, where $s_j$ is a socle element 
of~$\calO_{\bbX,p_j}$, and $(I_{\bbY/\bbX})^{\rm deh} \subseteq 
\bigcap_{k\ne j}\fq_k$.
For $a \in \calO_{\bbX,p_j}$, we set
$$
\mu(a) := \min\{\, i\in\bbN \,\mid\,
(0,\dots,0,a,0,\dots,0)x_0^i \in \tilde{\imath}(R) \,\},
$$
where $a$ occurs in the $j$-th position of $(0,\dots,0,a,0,\dots,0)$.
Then $\mu(a)\le r_\bbX$ for all $a\in \calO_{\bbX,p_j}$.
Let $\kappa_j := \dim_K K(p_j)$, and let 
$e_{j1}, \dots, e_{j\kappa_j}$ be elements of $\calO_{\bbX,p_j}$ 
whose residue classes form a $K$-basis of $K(p_j)$.
We put 
$$
	f^*_{jk} := \tilde{\imath}^{-1}((0,\dots,0,
	e_{jk}s_{j},0,\dots,0)x_0^{\mu(e_{jk}s_{j})})
$$
and $f_{jk} :=x_0^{r_{\bbX}-\mu(e_{jk}s_{j})}f^*_{jk}$
for $k = 1, \dots, \kappa_j$.

\begin{definition}
\begin{enumerate}
\item[(a)] The set $\{f^*_{j1},\dots,f^*_{j\kappa_j}\}$ is called the
\textit{set of minimal separators of $\bbY$ in $\bbX$}
with respect to $s_j$ and $\{e_{j1},\dots,e_{j\kappa_j}\}$.

\item[(b)] The set $\{f_{j1},\dots,f_{j\kappa_j}\}$ is called the
\textit{set of separators of $\bbY$ in~$\bbX$}
with respect to $s_j$ and $\{e_{j1},\dots,e_{j\kappa_j}\}$.

\item[(c)] The number
$
\mu_{\bbY/\bbX}:=\max\{\, \deg(f^*_{jk})
\mid k = 1, \dots, \kappa_j \,\}
$
is called the {\it maximal degree of a minimal separator
of~$\bbY$ in~$\bbX$}.
\end{enumerate}
\end{definition}

The maximal degree $\mu_{\bbY/\bbX}$ of a minimal separator
of a maximal $p_j$-subscheme $\bbY$ in~$\bbX$ depends neither on the
choice of the socle element~$s_j$ nor on the specific choice
of~$\{e_{j1},\dots,e_{j\kappa_j}\}$ (see \cite[Lemma~4.4]{KL2017}).
Letting $\alpha_{\bbY/\bbX} = \min\{i\in\bbN \mid (I_{\bbY/\bbX})_i\ne 0\}$
the \textit{initial degree} of $I_{\bbY/\bbX}$, 
the Hilbert function of $\bbY$ is described by the following lemma 
(see \cite[Proposition~2.5]{KLL2019}).

\begin{lemma}\label{lem:Sep-MaxSubsch}
For a  maximal $p_j$-subscheme $\bbY$ in~$\bbX$ we have 
$\alpha_{\bbY/\bbX} \le \mu_{\bbY/\bbX} \le r_\bbX$ and
$$
\HF_\bbY(i) = \begin{cases}
\HF_\bbX(i) & \mbox{if $i<\alpha_{\bbY/\bbX}$}\\
\le \HF_\bbX(i)-1 & \mbox{if $\alpha_{\bbY/\bbX}\le i<\mu_{\bbY/\bbX}$}\\
\HF_\bbX(i)-\kappa_j & \mbox{if $i\ge \mu_{\bbY/\bbX}$}
\end{cases}
$$
and there is a set of minimal separators 
$\{f^*_{j1},\dots,f^*_{j\kappa_j}\}$ of $\bbY$ in $\bbX$ such that 
$I_{\bbY/\bbX}=\ideal{f^*_{j1},\dots,f^*_{j\kappa_j}}$.
\end{lemma}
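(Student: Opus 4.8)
The plan is to pass to the homogeneous ring of quotients and reduce everything to a question about an increasing chain of subspaces of the socle ideal $\langle s_j\rangle\subseteq\calO_{\bbX,p_j}$. Write $J_j\subseteq S$ for the dehomogenization of $I_{\bbY/\bbX}$; by the description recalled above (via \cite[Proposition~3.2]{KL2017}) it is the ideal supported in the $j$-th factor of $S\cong\prod_k\calO_{\bbX,p_k}$ with $j$-th component $\langle s_j\rangle$, so $R_\bbY$ dehomogenizes to $S/J_j$, and since $\fm_{\bbX,p_j}s_j=0$ the multiplication map $K(p_j)\to\langle s_j\rangle$, $\bar a\mapsto as_j$, is an isomorphism of $K$-spaces; in particular $\dim_K\langle s_j\rangle=\kappa_j$. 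As $\tilde{\imath}$ is injective and $\tilde{\imath}(f)=f^{\rm deh}x_0^i$ on $R_i$, dehomogenization embeds $R_i$ into $S$ and identifies $(I_{\bbY/\bbX})_i$ with $R_i\cap J_j$. Transporting this through the isomorphism above gives $K$-linear isomorphisms $(I_{\bbY/\bbX})_i\cong V_i$ for all $i$, where
\[
V_i:=\{\, a\in\langle s_j\rangle \mid \mu(a)\le i \,\}.
\]
Since $V_i$ is the preimage of the subspace $\tilde{\imath}(R_i)$ under a fixed linear map, the $V_i$ form an increasing chain in $\langle s_j\rangle$, and $V_i=\langle s_j\rangle$ for $i\ge r_\bbX$ because $\mu(a)\le r_\bbX$ always (equivalently, $\tilde{\imath}|_{R_i}$ is onto $(S[x_0,x_0^{-1}])_i$ there).

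With this dictionary the numerical assertions are short. First, $\alpha_{\bbY/\bbX}=\min\{i\mid V_i\ne0\}$. Next, for any admissible socle element and $K$-basis $e_{j1},\dots,e_{j\kappa_j}$ of $K(p_j)$ the products $e_{jk}s_j$ span $\langle s_j\rangle$ and each has $\mu(e_{jk}s_j)=\deg(f^*_{jk})\le\mu_{\bbY/\bbX}$, so $V_{\mu_{\bbY/\bbX}}=\langle s_j\rangle$; conversely, if $V_{i_0}=\langle s_j\rangle$, pulling a basis of $\langle s_j\rangle$ back to $K(p_j)$ through $\bar a\mapsto as_j$ exhibits an admissible choice with all $\deg(f^*_{jk})\le i_0$. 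Hence $\mu_{\bbY/\bbX}=\min\{i\mid V_i=\langle s_j\rangle\}$, which reproves that $\mu_{\bbY/\bbX}$ is independent of all choices and gives $\alpha_{\bbY/\bbX}\le\mu_{\bbY/\bbX}\le r_\bbX$. The Hilbert function formula is then $\HF_\bbY(i)=\HF_\bbX(i)-\dim_K V_i$ read off in the three ranges $i<\alpha_{\bbY/\bbX}$ (where $V_i=0$), $\alpha_{\bbY/\bbX}\le i<\mu_{\bbY/\bbX}$ (where $1\le\dim_K V_i$), and $i\ge\mu_{\bbY/\bbX}$ (where $\dim_K V_i=\kappa_j$).

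For the last assertion I would choose the separators carefully: fix a $K$-basis $b_1,\dots,b_{\kappa_j}$ of $\langle s_j\rangle$ adapted to the chain $V_\bullet$, i.e.\ with $\{b_k\mid\mu(b_k)\le i\}$ a basis of $V_i$ for every $i$ — such a basis exists because any element of $V_i\setminus V_{i-1}$ has $\mu$-value exactly $i$, so one builds it by successively enlarging a basis of $V_{\alpha_{\bbY/\bbX}}$ — and take $e_{jk}$ to be the preimage of $b_k$ under $\bar a\mapsto as_j$. Then for homogeneous $f\in(I_{\bbY/\bbX})_i$ with $f^{\rm deh}=(0,\dots,0,a,0,\dots,0)$ we have $a\in V_i$, hence $a=\sum_{\mu(b_k)\le i}c_kb_k$ with $c_k\in K$, and applying $\tilde{\imath}^{-1}$ together with $\tilde{\imath}(f^*_{jk})=(0,\dots,0,b_k,0,\dots,0)x_0^{\mu(b_k)}$ yields $f=\sum_{\mu(b_k)\le i}c_k\,x_0^{\,i-\mu(b_k)}f^*_{jk}\in\langle f^*_{j1},\dots,f^*_{j\kappa_j}\rangle$; the reverse inclusion is immediate. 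The main obstacle is precisely this final step — arranging a basis compatible with the entire filtration and bookkeeping the powers of $x_0$; everything preceding it is routine once the identification $(I_{\bbY/\bbX})_i\cong V_i$ is in place, and much of it is already recorded in \cite{KL2017, KLL2019}.
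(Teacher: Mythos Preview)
The paper does not supply its own proof of this lemma; it simply cites \cite[Proposition~2.5]{KLL2019}. Your argument is a correct and self-contained reconstruction. The key identification $(I_{\bbY/\bbX})_i\cong V_i=\{a\in\langle s_j\rangle\mid\mu(a)\le i\}$, obtained from the injectivity of $\tilde{\imath}$ together with the fact that $I_{\bbY/\bbX}$ is $x_0$-saturated (so that $f\in I_{\bbY/\bbX}$ is equivalent to $f^{\rm deh}\in J_j$), is exactly the structure one expects the cited reference to exploit, and your derivation of $\mu_{\bbY/\bbX}=\min\{i\mid V_i=\langle s_j\rangle\}$ from it is clean. Your handling of the generator claim via a basis of $\langle s_j\rangle$ adapted to the filtration $V_\bullet$ is the natural approach; the bookkeeping with powers of $x_0$ is correct. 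Your own closing remark that much of this is already recorded in \cite{KL2017,KLL2019} is accurate: you have effectively reproved the cited proposition rather than offered a genuinely different route.
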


\begin{remark} \label{remSec2.3}
If $\bbX=\{p_1,...,p_s\}$
is a set of $s$ distinct $K$-rational points
of $\mathbb{P}^n_K$, then $\kappa_1=\cdots=\kappa_s=1$.
We write $p_j=(1:a_{j1}:...:a_{jn})$ with $a_{jk}\in K$. 
Then a minimal separator of $\bbX\setminus \{p_j\}$ in $\bbX$ 
is a homogeneous element $f\in R$ of smallest degree 
such that $f(p_j)\ne 0$ and $f(p_k)=0$ if $k\ne j$. 
Let $f_j \in R_{r_\bbX}$ be the separator of $\bbX\setminus \{p_j\}$ 
in $\bbX$ with $f_j(p_j)=1$. 
According to \cite[Corollary~2.9]{KLL2019}, we have 
$R_i =\ideal{ x_0^{i-r_\bbX}f_1,...,x_0^{i-r_\bbX}f_s }_K$
and $g = g(p_1)x_0^{i-r_\bbX}f_1 + \cdots+g(p_s)x_0^{i-r_\bbX}f_s$
for $i\ge r_\bbX$ and $g\in R_i$.
\end{remark}

Now we will introduce the main object of study for this section.
Consider the canonical multiplication map 
$\mu: R\otimes_{K[x_0]}R\rightarrow R$ given by $\mu(f\otimes g)=fg$.
Its kernel $J=\Ker(\mu)$ is a homogeneous ideal of $R\otimes_{K[x_0]}R$.
The graded $R$-algebra $\Omega^1_{R/K[x_0]}=J/J^2$ is known as
the \textit{module of K\"ahler differentials} of $R/K[x_0]$ and 
the homogeneous $K[x_0]$-linear map 
$d_{R/K[x_0]}: R\rightarrow \Omega^1_{R/K[x_0]}$ given by 
$f\mapsto f\otimes1-1\otimes f + J^2$ is the \textit{universal derivation} 
of $R/K[x_0]$.  For any algebra $B/A$ we can define
in the same way the module of K\"ahler differentials $\Omega^1_{B/A}$
(see \cite{Kun1986} for more details). 
In view of \cite[Sections~3-4]{Kun1986}, we deduce the following commutative diagram which indicates the relation between the modules of K\"ahler differentials of the three algebras $R/K[x_0]$, $R/K$ and $S/K$:
$$
\xymatrix{
 & & & 0\ar[d] & \\
 & 0\ar[d] &  & \ideal{x_0-1}\Omega^1_{R/K[x_0]} \ar[d] &  \\
0\ar[r] & Rdx_0 \ar[r]\ar @{^{(}->}[d] & 
\Omega^1_{R/K} \ar[r] \ar @{=}[d]
& \Omega^1_{R/K[x_0]} \ar[r]\ar[d]^{\varepsilon}   & 0\\
0\ar[r] & Rdx_0+ \ideal{x_0-1}\Omega^1_{R/K}  \ar[r] & 
\Omega^1_{R/K} \ar[r]^{\varepsilon} 
& \Omega^1_{S/K} \ar[r]\ar[d]   & 0\\
 & & & 0 & \\
}
$$
where the $R$-linear map $\varepsilon: \Omega^1_{R/K} \rightarrow 
\Omega^1_{S/K}$ is given by $\varepsilon(fdx_0) = 0$ and
$\varepsilon(fdx_i) = f^{\rm deh} dx_i$ for $i=1,...,n$ and $f\in R$.
In addition, $\Omega^1_{\overline{R}/K}\cong 
\Omega^1_{R/K[x_0]}/x_0\Omega^1_{R/K[x_0]}$, 
where $\overline{R}=R/\ideal{x_0}$.
The Fitting ideals of the modules of K\"ahler differentials of
$R/K[x_0]$, $S/K$ and $\overline{R}/K$ are given the following names 
(see \cite[Appendix D]{Kun1986} for basic properties of Fitting ideals). 

\begin{definition}
\begin{enumerate}
\item[(a)] The ideal $\vartheta_\bbX := \vartheta_{R/K[x_0]}
= F_0(\Omega^1_{R/K[x_0]})$ is called the \textit{K\"ahler different} 
of~$\bbX$ (or of $R/K[x_0]$).
\item[(b)] The ideal $\vartheta_\bbX(S/K)= F_0(\Omega^1_{S/K})$ 
is called the \textit{affine K\"ahler different} of~$\bbX$ 
(or of~$S/K$).
\item[(c)] The ideal $\vartheta_\bbX(\overline{R}/K)= 
F_0(\Omega^1_{\overline{R}/K})$  is called the 
\textit{reduced K\"ahler different} of~$\bbX$ (or of~$\overline{R}/K$).
\end{enumerate}
\end{definition}

\begin{remark}\label{remark:3differents}
Given a homogeneous system of generators $\{F_1,...,F_r\}$ 
of~$I_\bbX$, the K\"ahler different $\vartheta_\bbX$ is a homogeneous ideal 
of~$R$ generated by all $n$-minors of the Jacobian matrix 
$\left(\frac{\partial F_j}{\partial x_i}\right)_{\begin{subarray} 
\ i=1,...,n\\ j=1,...,r \end{subarray}}$ 
of $(F_1,...,F_r)$ with respect to $x_1,...,x_n$. 
Also, $\vartheta_\bbX$ does not depend on the choice of a homogeneous 
system of generators of~$I_\bbX$. 
In view of \cite[Proposition~1.1]{GM1984}, one can find a minimal
homogeneous system of generators of $I_\bbX$ of degrees $\le r_\bbX+1$,
and subsequently $\vartheta_\bbX$ can be generated by homogeneous
polynomials of degree $\le nr_\bbX$.
Furthermore, \cite[Rules~10.3]{Kun1986} yields that 
$$
\vartheta_\bbX(S/K) = \vartheta_\bbX/\ideal{x_0-1} = 
\prod_{j=1}^s \vartheta_\bbX(\calO_{\bbX,p_j}/K),\
\vartheta_\bbX(\overline{R}/K) = \vartheta_\bbX/\ideal{x_0}.
$$
\end{remark}

Recall that the local ring $\calO_{\bbX,p_j}=S/\fq_j$ is 
a \textit{complete intersection}, if the preimage $\fQ_j$ 
in $K[X_1,...,X_n]$ of $\fq_j$ can be generated by a regular sequence 
of length $n$ in $K[X_1,...,X_n]$.

\begin{definition}
\begin{enumerate}
\item[(a)] A point $p_j\in\Supp(\bbX)$ is called
a \textit{complete intersection point} of $\bbX$, 
or $\bbX$ is called a \textit{complete intersection at $p_j$}, if 
the local ring $\calO_{\bbX,p_j}$ is a complete intersection. 
\item[(b)] We say that $\bbX$ is a \textit{locally complete intersection} 
if it is a complete intersection at all points of its support.
\item[(c)] We say that $\bbX$ is a \textit{complete intersection} if $I_\bbX$ 
can be generated by a homogeneous regular sequence of length $n$ in~$P$.
\end{enumerate}
\end{definition}

If $\bbX$ is a complete intersection with $I_\bbX=\ideal{F_1,...,F_n}$,  
then it is a locally complete intersection and 
the K\"ahler different $\vartheta_\bbX$ is a principal ideal generated 
by the Jacobian determinant of $(F_1,..,F_n)$.
However, the K\"ahler different can be principal without $\bbX$
being a complete intersection (see \cite[Example~2.13]{KL2017}).

The following characterization of 0-dimensional complete intersections
is collected from a result of Scheja and Storch \cite{SS1975}
and \cite[Proposition~5.4]{KL2017}. Here note that our assumption about the 
characteristic of $K$ is essential for this result to hold.

\begin{proposition}\label{prop:cipoint-ci}
\begin{enumerate}
\item[(a)] The scheme $\bbX$ is a complete intersection at $p_j$ 
if and only if $\vartheta_\bbX(\calO_{\bbX,p_j}/K)\ne 0$. 
In this case, $\vartheta_\bbX(\calO_{\bbX,p_j}/K)$ is the socle 
of $\calO_{\bbX,p_j}$.
\item[(b)] The scheme $\bbX$ is a complete intersection if and only if
$\vartheta_\bbX(\overline{R}/K)\ne 0$.
\end{enumerate}
\end{proposition}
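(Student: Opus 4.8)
The plan is to reduce both parts to a statement about a single finite-dimensional $K$-algebra and then to quote the Scheja--Storch theory of finite complete intersections (for the ``only if'' directions the characteristic hypothesis is essential). Fix $p_j\in\Supp(\bbX)$ and put $A=\calO_{\bbX,p_j}$, a local Artinian $K$-algebra with maximal ideal $\fm_A$. Writing $S=K[X_1,\dots,X_n]/J_\bbX$ and using the primary decomposition $\ideal 0=\fq_1\cap\cdots\cap\fq_s$, we have $A=B/\fQ_j$ with $B=K[X_1,\dots,X_n]$ and $\fQ_j$ the $\fp$-primary preimage of $\fq_j$, an ideal of height $n$ in $B$. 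Since $\Omega^1_{B/K}$ is free of rank $n$ over $B$, the conormal exact sequence
$$\fQ_j/\fQ_j^2 \xrightarrow{\ \delta\ } \Omega^1_{B/K}\otimes_B A \cong A^n \longrightarrow \Omega^1_{A/K}\longrightarrow 0$$
exhibits $\Omega^1_{A/K}$ as the cokernel of the Jacobian matrix $M=\bigl(\overline{\partial g_i/\partial X_k}\bigr)$ of any minimal system of generators $g_1,\dots,g_m$ of $\fQ_j$. Hence $\vartheta_\bbX(\calO_{\bbX,p_j}/K)=F_0(\Omega^1_{A/K})$ is the ideal $I_n(M)$ of $A$ generated by all $n\times n$ minors of $M$; this reformulation underlies both implications in part~(a).

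If $\bbX$ is a complete intersection at $p_j$, then $\mu(\fQ_j)=\mathrm{ht}(\fQ_j)=n$, so $g_1,\dots,g_n$ is a regular sequence in the Cohen--Macaulay ring $B$, $M$ is square, and $\vartheta_\bbX(\calO_{\bbX,p_j}/K)=\ideal{\overline\Delta}$ with $\Delta=\det(\partial g_i/\partial X_k)$. Here I would invoke \cite{SS1975}: the cofactor identity $\Delta\,dX_k=\sum_i\mathrm{adj}(M)_{ki}\,dg_i$ in $\Omega^1_{B/K}$ gives $\overline\Delta\cdot\Omega^1_{A/K}=0$, and the Scheja--Storch description of the diagonal ideal of $A\otimes_K A$ for a finite complete intersection, together with the attached residue/trace pairing, shows that $\overline\Delta$ lies in and generates the socle $(0:_A\fm_A)$. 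The characteristic hypothesis enters exactly here to force $\overline\Delta\neq 0$: after base change to $\overline K$ the algebra $A\otimes_K\overline K$ is a product of Artinian complete intersections, the Grothendieck residue of the Jacobian of each factor equals its $\overline K$-dimension, and this is nonzero in $\overline K$ because $\mathrm{char}(K)$ is $0$ or exceeds $\dim_K A\le\deg(\bbX)$. Thus $\vartheta_\bbX(\calO_{\bbX,p_j}/K)=\mathrm{Soc}(A)\neq 0$.

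Conversely, assume $\vartheta_\bbX(\calO_{\bbX,p_j}/K)=I_n(M)\neq 0$. Then some $n$ of the generators, say $g_1,\dots,g_n$, have $\overline\Delta\neq 0$ in $A$, where $\Delta=\det(\partial g_i/\partial X_k)$. Put $J'=\ideal{g_1,\dots,g_n}\subseteq\fQ_j$ and $C=B/J'$; then $C\twoheadrightarrow A$ and $\overline\Delta\neq 0$ in $C$. Using the Jacobian criterion on the minimal primes of $C$ (and that $\Omega^1_{\kappa(\mathfrak q)/K}\neq 0$ whenever $\kappa(\mathfrak q)$ has positive transcendence degree over $K$), one shows that $\overline\Delta\neq 0$ forces $\mathrm{ht}(J')=n$, so $g_1,\dots,g_n$ is a regular sequence, $C$ is an Artinian complete intersection, and by the previous paragraph $\overline\Delta$ generates the simple socle of $C$. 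If $J'$ were a proper subideal of $\fQ_j$, the nonzero kernel of $C\twoheadrightarrow A$ would contain $\mathrm{Soc}(C)\ni\overline\Delta$, forcing $\overline\Delta\mapsto 0$ in $A$ --- a contradiction. Hence $\fQ_j=J'$ is $n$-generated and $\bbX$ is a complete intersection at $p_j$; this is in effect the argument of \cite[Proposition~5.4]{KL2017}. The main obstacle is the step ``$\overline\Delta\neq 0\Rightarrow\mathrm{ht}(J')=n$'': one must rule out the configuration in which $n$ of the $m>n$ minimal generators of $\fQ_j$ span a height-$<n$ ideal yet still contribute a nonzero minor (in which case $\overline\Delta$ is a nonzero nilpotent), and this is precisely where the Scheja--Storch computation is indispensable.

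\textbf{Part (b).} By Remark~\ref{remark:3differents}, $\vartheta_\bbX(\overline R/K)=\vartheta_\bbX/\ideal{x_0}=F_0(\Omega^1_{\overline R/K})$, where $\overline R=P/(I_\bbX+\ideal{X_0})\cong K[X_1,\dots,X_n]/\overline{I_\bbX}$ and $\overline{I_\bbX}$ is the image of $I_\bbX$ under $X_0\mapsto 0$; since $\overline R$ is Artinian, $\overline{I_\bbX}$ has height $n$. I would first prove that $\bbX$ is a complete intersection if and only if $\overline R$ is a complete intersection $K$-algebra. If $I_\bbX=\ideal{F_1,\dots,F_n}$ with the $F_i$ a homogeneous regular sequence, then $F_1,\dots,F_n,X_0$ is a regular sequence in $P$ (as $x_0$ is a non-zerodivisor on $R$), so $\overline R$ is an Artinian complete intersection. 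Conversely, if $\overline{I_\bbX}=\ideal{G_1,\dots,G_n}$ with the $G_i$ homogeneous, lift each $G_i$ to a homogeneous $F_i\in I_\bbX$ with $F_i\equiv G_i\pmod{X_0}$; then $I_\bbX=\ideal{F_1,\dots,F_n}+x_0I_\bbX$ (again because $x_0$ is a non-zerodivisor on $R$), so $I_\bbX=\ideal{F_1,\dots,F_n}$ by graded Nakayama, and the $F_i$ form a regular sequence since $I_\bbX$ has height $n$ in the Cohen--Macaulay ring $P$. Finally, $\overline R=K[X_1,\dots,X_n]/\overline{I_\bbX}$ is an Artinian algebra whose defining ideal has height equal to the number of variables, so ``$\overline R$ is a complete intersection $\iff F_0(\Omega^1_{\overline R/K})\neq 0$'' follows by the argument of part~(a), applied if one prefers to each Artinian local factor of $\overline R$.
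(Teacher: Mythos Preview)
The paper offers no proof of this proposition; the sentence immediately preceding it states that the result is ``collected from a result of Scheja and Storch \cite{SS1975} and \cite[Proposition~5.4]{KL2017}''. Your proposal is a faithful unpacking of exactly those two references: the Scheja--Storch trace/Jacobian machinery for part~(a), and the graded-Nakayama reduction to the Artinian fibre $\overline{R}$ for part~(b), which is the content of \cite[Proposition~5.4]{KL2017}. In that sense your approach agrees with the paper's.

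One remark on part~(b): your last sentence proposes to obtain ``$\overline{R}$ is a complete intersection $\iff F_0(\Omega^1_{\overline{R}/K})\neq 0$'' by applying part~(a) ``to each Artinian local factor of $\overline{R}$''. Taken literally this reduction would fail if $\overline{R}$ genuinely decomposed: non-vanishing of the K\"ahler different would only force \emph{some} factor to be a complete intersection, whereas you need a global regular-sequence presentation of $\overline{I_\bbX}$. The repair is simpler than your phrasing suggests: $\overline{R}=R/\ideal{x_0}$ is a connected graded $K$-algebra with $\overline{R}_0=K$, hence is already local with maximal ideal $\overline{R}_+$, and the argument of part~(a) applies to $\overline{R}$ itself without any factorwise detour.

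The ``obstacle'' you flag in the converse of~(a) --- ruling out $\mathrm{ht}(J')<n$ when $\overline{\Delta}$ is a nonzero nilpotent --- is indeed the substantive step, and you are right to attribute it to the Scheja--Storch computation rather than to the naive Jacobian-criterion argument on minimal primes; since the paper simply cites \cite{SS1975} here, your level of detail already exceeds what the paper provides.
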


Now we are interested in determining the Hilbert polynomial of 
the K\"ahler different of $\bbX$ and giving upper bound for 
its regularity index. 
If $\bbX$ is a fat point scheme in $\bbP^n_K$, then \cite[Theorem 2.5]{KLL2019}
yields that the Hilbert polynomial of $\vartheta_\bbX$ is exactly 
the number of reduced points of $\bbX$ and its regularity index 
satisfies $\ri(\vartheta_\bbX)= r_\bbX+s-1$ if $n=1$ and 
$\ri(\vartheta_\bbX)\le nr_\bbX$ if $n\ge 2$. 
The next proposition generalizes this property for arbitrary 
$0$-dimensional schemes $\bbX\subseteq \bbP^n_K$.

\begin{proposition}\label{prop_HF-Kdiff}
Let $\bbX_{\rm ci}$ be the set of complete intersection points
in the support $\Supp(\bbX)=\{p_1,...,p_s\}$ of $\bbX$.
Then the Hilbert polynomial of $\vartheta_\bbX$ is given by
$$
\HP_{\vartheta_\bbX} = \sum_{p_j\in \bbX_{\rm ci}} \dim_K K(p_j)
= \sum\limits_{p_j\in \bbX_{\rm ci}} \kappa_j
$$ 
and $\ri(\vartheta_\bbX)\le (n+1)r_\bbX$.
Moreover, we have 
$\ri(\vartheta_\bbX)= r_\bbX+\sum_{p_j\in \bbX_{\rm ci}}\kappa_j-1$ 
if $n=1$ and $\ri(\vartheta_\bbX)\le nr_\bbX$ if $n\ge 2$ and
all points in $\bbX_{\rm ci}$ are reduced.
\end{proposition}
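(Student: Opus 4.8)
The strategy is to pass to the affine and reduced K\"ahler differents and analyze the dehomogenization/homogenization dictionary degree by degree. Recall from Remark~\ref{remark:3differents} that $\vartheta_\bbX(S/K)=\vartheta_\bbX/\ideal{x_0-1}=\prod_{j=1}^s\vartheta_\bbX(\calO_{\bbX,p_j}/K)$ and that $\vartheta_\bbX(\overline{R}/K)=\vartheta_\bbX/\ideal{x_0}$. By Proposition~\ref{prop:cipoint-ci}(a), $\vartheta_\bbX(\calO_{\bbX,p_j}/K)$ is nonzero exactly when $p_j\in\bbX_{\rm ci}$, and in that case it equals the socle of $\calO_{\bbX,p_j}$, which is a $K(p_j)$-vector space of dimension one, hence $\dim_K\vartheta_\bbX(\calO_{\bbX,p_j}/K)=\kappa_j$. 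Therefore $\dim_K\vartheta_\bbX(S/K)=\sum_{p_j\in\bbX_{\rm ci}}\kappa_j$. First I would use the fact that $x_0$ is a non-zerodivisor on $R$ together with the graded-freeness of $R$ over $K[x_0]$ to see that $\vartheta_\bbX$, being a homogeneous ideal containing all maximal minors of the Jacobian, has $\HF_{\vartheta_\bbX}(i)$ eventually constant; dehomogenizing at $x_0-1$ gives $\HP_{\vartheta_\bbX}=\dim_K\vartheta_\bbX(S/K)=\sum_{p_j\in\bbX_{\rm ci}}\kappa_j$. This settles the Hilbert polynomial formula.

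**The regularity index bound $\ri(\vartheta_\bbX)\le(n+1)r_\bbX$.** By Remark~\ref{remark:3differents}, $\vartheta_\bbX$ is generated by homogeneous polynomials of degree $\le nr_\bbX$ (using the bound $r_\bbX+1$ on generator degrees of $I_\bbX$ and taking $n$-minors of the Jacobian). The issue is to pass from "generated in degrees $\le nr_\bbX$" to a statement about where $\HF_{\vartheta_\bbX}$ stabilizes. For this I would argue: once $i\ge nr_\bbX$, every degree-$i$ piece of $\vartheta_\bbX$ is of the form $x_0^{\,i-d}\cdot(\vartheta_\bbX)_d$ summed over generator degrees $d\le nr_\bbX$; since multiplication by $x_0$ is injective on $R$ and $R_i\cong(S[x_0,x_0^{-1}])_i$ for $i\ge r_\bbX$ (from the isomorphism in \eqref{Equa:ImathMap}), the dimension $\HF_{\vartheta_\bbX}(i)$ for $i\ge nr_\bbX+r_\bbX$ matches $\dim_K\vartheta_\bbX(S/K)$. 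This gives $\ri(\vartheta_\bbX)\le nr_\bbX+r_\bbX=(n+1)r_\bbX$.

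**The sharper bounds.** For $n=1$: here $\bbX$ is a scheme in $\bbP^1_K$, $I_\bbX$ is principal, $\vartheta_\bbX$ is the principal ideal generated by $\partial F/\partial x_1$ for a single generator $F$ of $I_\bbX$. A point $p_j$ is a complete intersection point iff $\calO_{\bbX,p_j}$ is a complete intersection, but in $\bbP^1$ every $\calO_{\bbX,p_j}$ is automatically a (local) complete intersection, so $\bbX_{\rm ci}=\Supp(\bbX)$ and $\sum_{p_j\in\bbX_{\rm ci}}\kappa_j$ equals the number of reduced points counted with residue-field degree. One computes $\deg(\partial F/\partial x_1)$ and the structure of $R/\vartheta_\bbX$ directly — this is essentially the fat-point computation cited from \cite[Theorem~2.5]{KLL2019} adapted to the possibly non-rational, non-fat case, and the exact value $\ri(\vartheta_\bbX)=r_\bbX+\sum_{p_j\in\bbX_{\rm ci}}\kappa_j-1$ falls out of tracking the Hilbert function of the quotient by a principal ideal generated in a known degree. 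For $n\ge2$ with all points of $\bbX_{\rm ci}$ reduced: here at a reduced (hence smooth) point the socle of $\calO_{\bbX,p_j}$ is all of $\calO_{\bbX,p_j}$, i.e.\ $\vartheta_\bbX(\calO_{\bbX,p_j}/K)=\calO_{\bbX,p_j}=K(p_j)$. I would show the minimal separators $f^\ast_{jk}$ of a maximal $p_j$-subscheme at such a reduced $p_j$ have degree $\le r_\bbX-1$ when $n\ge2$ (this is where the reducedness and $n\ge2$ hypotheses genuinely enter, via Lemma~\ref{lem:Sep-MaxSubsch} and the generic-hyperplane/linear-forms argument familiar for reduced points), and then that products of $n$ such separators realize elements of $\vartheta_\bbX$ of degree $\le n(r_\bbX-1)+\text{something}$, refining the bound to $\le nr_\bbX$.

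**Main obstacle.** The routine part is the Hilbert-polynomial formula. The delicate part — and the step I expect to require the most care — is the sharp regularity bound $\ri(\vartheta_\bbX)\le nr_\bbX$ for $n\ge2$ with $\bbX_{\rm ci}$ reduced: one must control not merely the degrees of a generating set of $\vartheta_\bbX$ but the degree by which the ideal "fills up" modulo $x_0$, i.e.\ one needs $\HF_{\vartheta_\bbX(\overline R/K)}$ to vanish above $nr_\bbX$, and connecting this to the reduced K\"ahler different $\vartheta_\bbX(\overline R/K)=\vartheta_\bbX/\ideal{x_0}$ forces one to understand how the Jacobian minors specialize at $x_0=0$. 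The cleanest route is probably to bound the regularity of $R/\vartheta_\bbX$ via a comparison with the ideal of minimal separators at the reduced points (using that reduced points impose independent conditions in generic position on the relevant degrees, exactly as in the classical $\bbP^n$ point-set arguments), but writing this carefully while keeping track of residue-field degrees $\kappa_j>1$ for the non-rational reduced points is the technical heart of the proof.
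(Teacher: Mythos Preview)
Your treatment of the Hilbert-polynomial formula and the general bound $\ri(\vartheta_\bbX)\le(n+1)r_\bbX$ is essentially the paper's argument, though the paper phrases the latter constructively: it picks, for each $p_j\in\bbX_{\rm ci}$, a Jacobian minor $h_j\in(\vartheta_\bbX)_{nr_\bbX}$ not vanishing at~$p_j$ and an idempotent-lift $g_j\in R_{r_\bbX}$, and observes that the products $f_j=g_jh_j\in(\vartheta_\bbX)_{(n+1)r_\bbX}$ are separators of the maximal $p_j$-subschemes, hence linearly independent of the right cardinality. Your abstract version (dehomogenize, use that $R_i\cong S$ for $i\ge r_\bbX$ and that generators of $\vartheta_\bbX$ sit in degrees $\le nr_\bbX$) reaches the same conclusion once written carefully, although your sentence ``every degree-$i$ piece of $\vartheta_\bbX$ is of the form $x_0^{i-d}\cdot(\vartheta_\bbX)_d$'' is not literally true---you need $R_{i-d}$-multiples, not $x_0$-powers.

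The genuine gap is in the sharp bound $\ri(\vartheta_\bbX)\le nr_\bbX$ for $n\ge 2$ with $\bbX_{\rm ci}$ reduced. Your proposed mechanism---that minimal separators at reduced points have degree $\le r_\bbX-1$ when $n\ge 2$---is simply false: Lemma~\ref{lem:Sep-MaxSubsch} gives only $\mu_{\bbY/\bbX}\le r_\bbX$, and for any CB-scheme every minimal separator has degree exactly~$r_\bbX$. So your intended degree saving never materializes. The paper's argument is quite different and relies on an explicit Jacobian identity that you do not mention: after base change to $K$-rational support and translating a reduced point $p_j$ to $(1{:}0{:}\cdots{:}0)$, the representative $F_j$ of the minimal separator $f_j^*$ satisfies $X_iF_j\in I_\bbX$ for $i=1,\dots,n$, and one computes directly
\[
\frac{\partial(X_1F_j,\dots,X_nF_j)}{\partial(X_1,\dots,X_n)}=F_j^n+G,\qquad G\in I_\bbX,
\]
so that $(f_j^*)^n\in\vartheta_\bbX$. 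Since $\deg(f_j^*)\le r_\bbX$ and $(f_j^*)^n(p_j)\ne 0$ while $(f_j^*)^n(p_k)=0$ for $k\ne j$, the $n$-th powers $(f_1^*)^n,\dots,(f_t^*)^n$ furnish $t=\HP_{\vartheta_\bbX}$ linearly independent elements of $(\vartheta_\bbX)_{nr_\bbX}$, forcing $\HF_{\vartheta_\bbX}(nr_\bbX)=\HP_{\vartheta_\bbX}$. This determinant computation is the key idea your plan is missing; your alternative route via $\vartheta_\bbX(\overline{R}/K)$ and specialization at $x_0=0$ does not obviously produce the needed elements, and the separator-degree claim it rests on is incorrect.

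A minor slip: in the $n=1$ discussion, $\bbX_{\rm ci}=\Supp(\bbX)$ (correct), but $\sum_{p_j\in\bbX_{\rm ci}}\kappa_j$ is the sum of residue-field degrees over \emph{all} points, not just the reduced ones.
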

\begin{proof}
Clearly, the map 
$\widetilde{\imath}: R\rightarrow Q^h(R)\cong S[x_0,x_0^{-1}]$
given by (\ref{Equa:ImathMap}) has the image in the subring $S[x_0]$
of $S[x_0,x_0^{-1}]$. For $i\ge \ri(\vartheta_\bbX)$ we have 
$(\vartheta_\bbX)_i 
= x_0^{i-\ri(\vartheta_\bbX)}(\vartheta_\bbX)_{\ri(\vartheta_\bbX)}$,
since $x_0$ is a non-zerodivisor of~$R$. This implies that 
the image of $(\vartheta_\bbX)_{\ri(\vartheta_\bbX)}$ under 
$\widetilde{\imath}$ is $\vartheta_\bbX(S/K)x_0^{\ri(\vartheta_\bbX)}$.
Moreover, the map $\widetilde{\imath}$ is injective, and so we have 
$$
\HP_{\vartheta_\bbX} = \HF_{\vartheta_\bbX}(\ri(\vartheta_\bbX))
=\dim_K(\vartheta_\bbX)_{\ri(\vartheta_\bbX)}
=\dim_K(\vartheta_\bbX(S/K)).
$$ 
According to Remark~\ref{remark:3differents},
$\vartheta_\bbX(S/K)=\prod_{j=1}^s \vartheta_\bbX(\calO_{\bbX,p_j}/K)$, and 
hence 
\begin{equation}\label{Equation:HP_Kdiff}
\HP_{\vartheta_\bbX}=\sum_{j=1}^s\dim_K(\vartheta_\bbX(\calO_{\bbX,p_j}/K)).
\end{equation}
By Proposition~\ref{prop:cipoint-ci}, $\vartheta_\bbX(\calO_{\bbX,p_j}/K)
=0$ if $p_j\notin \bbX_{\rm ci}$, and $\vartheta_\bbX(\calO_{\bbX,p_j}/K)
\ne 0$ if $p_j\in \bbX_{\rm ci}$.
For $p_j\in \bbX_{\rm ci}$, the local ring $\calO_{\bbX,p_j}$ is a 
complete intersection with socle $\vartheta_\bbX(\calO_{\bbX,p_j}/K)$, and
consequently  
$$
\dim_K(\vartheta_\bbX(\calO_{\bbX,p_j}/K)) =
\dim_K(\Ann_{\calO_{\bbX,p_j}}(\fm_{\bbX,p_j}))=\dim_K K(p_j)=\kappa_j.
$$
Based on (\ref{Equation:HP_Kdiff}), we obtain 
the desired equalities for the Hilbert polynomial of~$\vartheta_\bbX$.

Next, we prove the upper bound for $\ri(\vartheta_\bbX)$.
For a field extension $K\subseteq L$, we see that 
$\vartheta_\bbX(L\otimes_KR/L[x_0]) = (L\otimes_KR)\cdot \vartheta_\bbX$
by \cite[Rules~10.3]{Kun1986}. It follows that 
$\HF_{\vartheta_\bbX(L\otimes_KR/L[x_0])} = \HF_{\vartheta_\bbX}$.
So, we may assume that $\bbX$ has $K$-rational support.
Under this assumption, we have $\HP_{\vartheta_\bbX} = \# \bbX_{\rm ci}$.
By renaming the indices, we may write $\bbX_{\rm ci}=\{p_1,...,p_t\}$
with $t\le s$. If $t=0$ then $\vartheta_\bbX=\ideal{0}$, and so there
is nothing to prove. Thus we assume $t\ge 1$.
Let $j\in\{1,...,t\}$. Consider the $K$-linear map 
$$
\widetilde{\imath}_j: R\rightarrow S[x_0] \rightarrow 
\calO_{\bbX,p_j}=S/\fq_j,\, f\mapsto f^{\rm deh}+\fq_j.
$$ 
We have $\widetilde{\imath}_j(\vartheta_\bbX)
= \vartheta_\bbX(\calO_{\bbX,p_j}/K) \ne 0$. 
By Remark~\ref{remark:3differents}, $\vartheta_\bbX$
is generated in degrees $\le nr_\bbX$.
If $\widetilde{\imath}_j(h)=h^{\rm deh}+\fq_j=0$ for all 
$h\in (\vartheta_\bbX)_{nr_\bbX}$, then we have 
$\widetilde{\imath}((\vartheta_\bbX))=\vartheta_\bbX(\calO_{\bbX,p_j}/K)=0$,
a contradiction. Hence there exists $h_j \in (\vartheta_\bbX)_{nr_\bbX}$ 
such that $h_j^{\rm deh}+\fq_j\ne 0$ is a socle element 
in $\calO_{\bbX,p_j}$ for $j=1,...,t$. 
Furthermore, we take $g_j = \widetilde{\imath}^{-1}((0,...,0,1,0,...,0)x_0^{r_\bbX})\in R_{r_\bbX}$, where
$(0,...,0,1,0,...,0)$ has $1$ in the $j$-th position.
Then $f_j := g_jh_j$ is a separator of the unique maximal $p_j$-subscheme
$\bbY_j$ of $\bbX$ of degree $\deg(\bbY_j)=\deg(\bbX)-1$,
since $\calO_{\bbX,p_j}$ is a complete intersection.
In particular, $\{f_1,...,f_t\}$ is $K$-linearly independent and 
$\ideal{f_1,...,f_t}\subseteq (\vartheta_\bbX)_{(n+1)r_\bbX}$,
and this implies 
$$
\HP_{\vartheta_\bbX} = t = \dim_K(\ideal{f_1,...,f_t})_{(n+1)r_\bbX}
\le \HF_{\vartheta_\bbX}((n+1)r_\bbX) \le t.
$$
Thus we get $\HF_{\vartheta_\bbX}((n+1)r_\bbX)=\HP_{\vartheta_\bbX}$, 
and therefore $\ri(\vartheta_\bbX)\le (n+1)r_\bbX$. 

When $n=1$, the scheme $\bbX$ is a fat point scheme, and hence
$\ri(\vartheta_\bbX)= r_\bbX+\#\bbX_{\rm ci} -1$ by \cite[Lemma~2.3]{KLL2019}.
Now we consider the case $n\ge 2$ and all points in $\bbX_{\rm ci}$
are reduced. As the above assumption, $\bbX$ has $K$-rational support, 
and hence each point of $\bbX_{\rm ci}$ is a $K$-rational point of $\bbX$.
For $j\in\{1,...,t\}$, let $f_j^*$ be the minimal separator of 
the uniquely maximal $p_j$-subscheme $\bbY_j$ of $\bbX$, 
and let $F_j\in P$ be the representative 
of $f^*_j$ such that $\deg(F_j) = \deg(f_j^*)\le r_\bbX$.
We want to show that $(f_j^*)^n \in \vartheta_\bbX$. 
Without loss of generality, we may assume that $p_j=(1:0:...:0)$.
Then $X_iF_j \in I_\bbX$ for $i=1,...,n$, whence 
$\frac{\partial(X_1F_j,...,X_nF_j)}{\partial(x_1,...,x_n)}
\in \vartheta_\bbX$. 
We calculate 
$$
\begin{aligned}
\frac{\partial(X_1F_j,...,X_nF_j)}{\partial(X_1,...,X_n)}
&= \det \begin{pmatrix}
F_j + X_1 \frac{\partial F_j}{\partial X_1} & \cdots & 
X_1 \frac{\partial F_j}{\partial X_n}\\
\vdots &\ddots &\vdots \\
X_n \frac{\partial F_j}{\partial X_1} & \cdots & 
F_j + X_n \frac{\partial F_j}{\partial X_n}
\end{pmatrix}\\
&= F_j^n + G,\quad G\in I_\bbX.
\end{aligned}
$$
Hence $(f_j^*)^n \in \vartheta_\bbX$. 
Consequently, we have $\ideal{(f_1^*)^n,...,(f_t^*)^n}
\subseteq \vartheta_\bbX$.
Since each point of $\bbX_{\rm ci}$ is a $K$-rational point, 
$f_j^*(p_j)\ne 0$ and $f_j^*(p_k)=0$ for $k\ne j$, and hence
we conclude that 
$\HF_{\vartheta_\bbX}(nr_\bbX)=
\dim_K(\ideal{(f_1^*)^n,...,(f_t^*)^n})_{nr_\bbX} =
\HP_{\vartheta_\bbX}$ and $\ri(\vartheta_\bbX)\le nr_\bbX$.
\end{proof}

The proposition has the following immediate consequence.

\begin{corollary}\label{cor:LocallyCI}
Under the setting of Proposition~\ref{prop_HF-Kdiff},
suppose that $n\ge 2$.
\begin{enumerate}
\item[(a)] $\bbX$ has a complete intersection point 
if and only if $\vartheta_\bbX \ne 0$.
\item[(b)] $\bbX$ is a locally complete intersection if and only if 
 $\HP_{\vartheta_\bbX}=\sum_{j=1}^s \dim_K K(p_j)$.
\item[(c)] If $\bbX$ is reduced, then $\HP_{\vartheta_\bbX}=\deg(\bbX)$
and $2r_\bbX \le \ri(\vartheta_\bbX)\le nr_\bbX$, in particular,
$\ri(\vartheta_\bbX)=2r_\bbX$ if $n=2$ or $\bbX$ is a complete intersection.
\end{enumerate}
\end{corollary}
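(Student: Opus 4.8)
The plan is to deduce (a) and (b) directly from the formula $\HP_{\vartheta_\bbX}=\sum_{p_j\in\bbX_{\rm ci}}\kappa_j$ of Proposition~\ref{prop_HF-Kdiff}. Since $x_0$ is a non-zerodivisor of $R$, multiplication by $x_0$ embeds $(\vartheta_\bbX)_i$ into $(\vartheta_\bbX)_{i+1}$, so $\HF_{\vartheta_\bbX}$ is non-decreasing and $\vartheta_\bbX\ne 0$ if and only if $\HP_{\vartheta_\bbX}\ne 0$; hence $\vartheta_\bbX\ne 0\iff\bbX_{\rm ci}\ne\emptyset\iff\bbX$ has a complete intersection point, which is (a). For (b) the same formula gives $\HP_{\vartheta_\bbX}\le\sum_{j=1}^s\kappa_j=\sum_{j=1}^s\dim_K K(p_j)$, with equality precisely when $\bbX_{\rm ci}=\Supp(\bbX)$, i.e.\ when $\bbX$ is a locally complete intersection.

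For (c), assume $\bbX$ is reduced, so that $\calO_{\bbX,p_j}=K(p_j)$ for every $j$. By the standing hypothesis on $\operatorname{char}(K)$ the extension $K(p_j)/K$ is separable, hence $\Omega^1_{K(p_j)/K}=0$ and $\vartheta_\bbX(\calO_{\bbX,p_j}/K)=F_0(0)=K(p_j)\ne 0$; by Proposition~\ref{prop:cipoint-ci}(a) each $p_j$ is a complete intersection point, so $\bbX$ is a locally complete intersection and (b) gives $\HP_{\vartheta_\bbX}=\sum_{j=1}^s\dim_K K(p_j)=\deg(\bbX)$. As all points of $\bbX_{\rm ci}=\Supp(\bbX)$ are reduced, the bound $\ri(\vartheta_\bbX)\le nr_\bbX$ is exactly the last assertion of Proposition~\ref{prop_HF-Kdiff}. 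If moreover $\bbX$ is a complete intersection, say $I_\bbX=\ideal{F_1,\dots,F_n}$ with $\deg(F_i)=d_i$, then $\vartheta_\bbX=\ideal{\delta}$ where $\delta$ is the Jacobian determinant of $(F_1,\dots,F_n)$ with respect to $x_1,\dots,x_n$, of degree $\sum_{i=1}^n(d_i-1)$. Since $\bbX\cap\calZ(X_0)=\emptyset$, the Artinian reduction $R/\ideal{x_0}$ equals $K[x_1,\dots,x_n]/\ideal{\overline{F}_1,\dots,\overline{F}_n}$, a graded Artinian complete intersection of top degree $\sum_{i=1}^n(d_i-1)$; comparing Hilbert functions through $0\to R(-1)\xrightarrow{\,x_0\,}R\to R/\ideal{x_0}\to 0$ yields $r_\bbX=\sum_{i=1}^n(d_i-1)=\deg(\delta)$. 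Moreover $\delta$ is a non-zerodivisor, because $\bbX$ reduced means $\bbX$ is smooth, so $\delta(p_j)\ne 0$ for every $j$ by the Jacobian criterion (using Euler's relation and $X_0|_{p_j}\ne 0$). Therefore $\HF_{\vartheta_\bbX}(i)=\HF_R(i-r_\bbX)$, which equals $\deg(\bbX)$ precisely for $i\ge 2r_\bbX$, giving $\ri(\vartheta_\bbX)=2r_\bbX$.

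The remaining point -- the lower bound $\ri(\vartheta_\bbX)\ge 2r_\bbX$ for an arbitrary reduced $\bbX$ with $n\ge 2$ -- is the one that does not follow formally; granting it, the case $n=2$ is immediate from $2r_\bbX\le\ri(\vartheta_\bbX)\le nr_\bbX=2r_\bbX$. My approach would be to compare $\vartheta_\bbX$ with the Dedekind different. For reduced $\bbX$ one has $\vartheta_\bbX\subseteq\vartheta_N(R/K[x_0])=\vartheta_D(R/K[x_0])$ by Kunz; both ideals have Hilbert polynomial $\deg(\bbX)$, so they coincide in all sufficiently large degrees, and since $\HF_{\vartheta_\bbX}\le\HF_{\vartheta_D(R/K[x_0])}$ everywhere this forces $\ri(\vartheta_\bbX)\ge\ri(\vartheta_D(R/K[x_0]))$. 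It then suffices to prove $\ri(\vartheta_D(R/K[x_0]))=2r_\bbX$. After extending scalars to a splitting field of $\bbX$ (which preserves every Hilbert function in sight), one identifies the Dedekind complementary module $\mathfrak{C}_{R/K[x_0]}$ with $\Hom_{K[x_0]}(R,K[x_0])$: it contains the normalization $\widetilde{R}$, agrees with it in non-negative degrees, and its component in degree $-r_\bbX$ is the orthogonal complement in $K^{\deg(\bbX)}$ of the image of $R_{r_\bbX-1}$ under the isomorphism $\tilde{\imath}\colon R_{r_\bbX}\to K^{\deg(\bbX)}$. Unwinding the condition defining $R:\mathfrak{C}_{R/K[x_0]}$ degree by degree, it becomes vacuous from degree $2r_\bbX$ on, whereas in degree $2r_\bbX-1$ a short orthogonality argument -- splitting $K^{\deg(\bbX)}$ according to the support of a suitably chosen element of $(\mathfrak{C}_{R/K[x_0]})_{-r_\bbX}$ -- shows $(\vartheta_D(R/K[x_0]))_{2r_\bbX-1}\subsetneq R_{2r_\bbX-1}$. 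This regularity computation for $\vartheta_D(R/K[x_0])$ is the main obstacle; alternatively one may invoke the smooth case of \cite{KL2017}, of which the corollary is a mild generalization. Everything else in the statement is a formal consequence of Propositions~\ref{prop_HF-Kdiff} and \ref{prop:cipoint-ci}.
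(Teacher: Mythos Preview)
Your approach is correct and matches the paper's: parts (a), (b), and the upper bound in (c) are formal consequences of Proposition~\ref{prop_HF-Kdiff}, and for the lower bound $2r_\bbX\le\ri(\vartheta_\bbX)$ the paper likewise extends scalars, uses the inclusion $\vartheta_\bbX\subseteq\vartheta_D^\sigma$, and then invokes $\ri(\vartheta_D^\sigma)=2r_\bbX$ from \cite[Corollary~3.6]{KLL2019} rather than sketching its proof as you do. Your explicit treatment of the reduced complete intersection case is more detailed than the paper, which just lets it fall out of the general lower bound together with the principal-ideal description of $\vartheta_\bbX$ given later in Section~\ref{Sec4}.
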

\begin{proof}
It remains to prove $2r_\bbX \le \ri(\vartheta_\bbX)$ when $\bbX$
is reduced. By extending the field $K$, we may assume that 
$\bbX$ is a set of $s$ distinct $K$-rational points. Then
the inequality follows from the fact that the K\"ahler different 
is a subideal of the Dedekind different of $\bbX$ (see later)
and from \cite[Corollary~3.6]{KLL2019}.
\end{proof}

The upper bound $\ri(\vartheta_\bbX)\le nr_\bbX$ in
the corollary seems to hold true in general by some experience 
with ApCoCoA, however, we did not know how to prove it.
The hypothesis that $\bbX$ is reduced is essential
for the lower bound $2r_\bbX \le \ri(\vartheta_\bbX)$.

\begin{example}
Consider the non-reduced 0-dimensional complete intersection scheme 
$\bbX\subseteq \bbP^2_\bbQ$ defined by the ideal 
$I_\bbX=\ideal{X_1^2, X_2^3}\subseteq P=\bbQ[X_0,X_1,X_2]$.
Then we have $\HF_\bbX:\ 1\ 3\ 5\ 6\ 6\cdots$ and $r_\bbX=3$.
Moreover, we also have 
$\vartheta_\bbX =\ideal{x_1x_2^2}$ and
$\HF_{\vartheta_\bbX}:\ 0\ 0\ 0\ 1\ 1\cdots$ and 
$\ri(\vartheta_\bbX)=3= r_\bbX$.
\end{example}

\medskip\bigbreak
\section{The K\"ahler Different and CBP for 
Schemes in Generic Position}
\label{Sec3}

In the following we continue to use the assumptions and notation
introduced above. In this section, we want to use the K\"ahler different
to look at a special class of $0$-dimensional schemes $\bbX$ in~$\bbP^n_K$,
namely schemes in generic position. Simultaneously,  we investigate 
the Cayley-Bacharach and arithmetically Gorenstein properties of~$\bbX$. 

We first recall the main notions of this section 
(see also \cite{GO1981} and \cite{KLL2019}).

\begin{definition}
Let $\bbX$ be a 0-dimensional scheme in $\bbP^n_K$ with 
$\Supp(\bbX)\!=\!\{p_1,...,p_s\}$.
\begin{enumerate}
\item[(a)] We say that $\bbX$ is \textit{in generic position}, or $\bbX$ has
\textit{generic Hilbert function}, if
$\HF_{\bbX}(i) = \min\big\{\, \deg(\bbX), \binom{i+n}{n} \,\big\}$
for all $i \in \bbZ$.
\item[(b)] The {\it degree of $p_j$ in $\bbX$} is defined as
$$
\deg_{\bbX}(p_j) := \min\big\{\, \mu_{\bbY/\bbX} \; \big| \;
\bbY \ \textrm{is a maximal $p_j$-subscheme of $\bbX$} \,\big\}.
$$
When $\deg_{\bbX}(p_j)= r_\bbX$ for every $j\in\{1,\dots,s\}$,
we say that $\bbX$ is a \textit{Cayley-Bacharach scheme}
(in short, $\bbX$ is a \textit{CB-scheme}).
\end{enumerate}
\end{definition}

When $n=1$ or ($n \ge 2$ and $\deg(\bbX)=1$), it is clearly true that 
$\bbX$ is a complete intersection in generic position, and so a
CB-scheme, and  that $\vartheta_\bbX$ is a principal ideal 
of~$R$ generated in degree $nr_{\bbX}$.
In the following, we omit this case by assuming that $n,\deg(\bbX) \ge 2$.

Let $\alpha_{\bbX} = \min\{\, i \in \bbN \mid (I_{\bbX})_i \ne 0 \, \}$
be the \textit{initial degree} of~$I_{\bbX}$.
Obviously, $\bbX$ has generic Hilbert function
if and only if $\alpha_\bbX\ge r_\bbX$. In this case, 
$I_{\bbX}$ can be generated by polynomials of degrees
$\alpha_{\bbX}$ and $\alpha_{\bbX}+1$, and $\alpha_{\bbX}$
is the unique integer such that
$$
\binom{n+\alpha_{\bbX}-1}{n} \,\le\, \deg(\bbX) \, < \, 
\binom{n+\alpha_{\bbX}}{n}.
$$
On the other hand, it is well-known that $\bbX$ is
a CB-scheme if and only if, for all $j\in\{1,..,s\}$, every maximal
$p_j$-subscheme $\bbY \subseteq \bbX$ satisfies 
$\dim_K(I_{\bbY/\bbX})_{r_\bbX-1} < \kappa_j=\dim_K K(p_j)$
(see e.g. \cite[Proposition~4.3]{KLL2019}).

We have the following observation.

\begin{lemma} \label{lemSec3.2}
If $\bbX$ is in generic position with 
$\deg(\bbX) = \binom{n+\alpha_{\bbX}-1}{n}$,
then $\bbX$ is a CB-scheme and $\vartheta_\bbX$ is generated 
in degree $nr_\bbX$.
\end{lemma}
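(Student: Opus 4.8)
The hypothesis $\deg(\bbX) = \binom{n+\alpha_\bbX-1}{n}$, combined with generic position (equivalently $\alpha_\bbX \ge r_\bbX$), forces $\HF_\bbX(\alpha_\bbX-1) = \binom{n+\alpha_\bbX-1}{n} = \deg(\bbX)$, so $r_\bbX \le \alpha_\bbX - 1$; together with $\alpha_\bbX \ge r_\bbX$ this pins down $\alpha_\bbX = r_\bbX + 1$. Moreover $(I_\bbX)_{r_\bbX} = 0$ and $I_\bbX$ is generated by forms of degrees $\alpha_\bbX = r_\bbX+1$ and $\alpha_\bbX + 1 = r_\bbX + 2$ (by the description of generic-position ideals recalled before the lemma). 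The first thing I would do is record these numerical consequences cleanly, since they control the degrees of the Jacobian minors.

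Next I would prove the CB-property. Using the criterion recalled just before the lemma, $\bbX$ is a CB-scheme iff for every $j$ and every maximal $p_j$-subscheme $\bbY \subseteq \bbX$ one has $\dim_K(I_{\bbY/\bbX})_{r_\bbX - 1} < \kappa_j$. But $I_{\bbY/\bbX} \supseteq I_\bbX$ in $P$, and more relevantly the relevant graded piece: since $\bbX$ is in generic position, $\HF_\bbX(r_\bbX-1) = \binom{n+r_\bbX-1}{n}$ already equals the ambient dimension $\dim_K P_{r_\bbX-1}$ only if $r_\bbX - 1 < \alpha_\bbX$, which holds; hence $\HF_\bbX(r_\bbX-1) = \binom{n+r_\bbX-1}{n}$, and comparing with $\HF_\bbY(r_\bbX-1) \le \HF_\bbX(r_\bbX-1)$ via Lemma~\ref{lem:Sep-MaxSubsch} (the middle range $\alpha_{\bbY/\bbX} \le i < \mu_{\bbY/\bbX}$ gives at most $\HF_\bbX(i)-1$, and $\mu_{\bbY/\bbX} \le r_\bbX$) one gets $\dim_K(I_{\bbY/\bbX})_{r_\bbX-1} = \HF_\bbX(r_\bbX-1) - \HF_\bbY(r_\bbX-1)$. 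I would argue this difference is strictly smaller than $\kappa_j$ because the minimal separators of $\bbY$ in $\bbX$ have degree $\mu_{\bbY/\bbX} = r_\bbX$: indeed $\alpha_{\bbY/\bbX} \ge \alpha_\bbX$ is false in general, so the cleaner route is to invoke that $\deg_\bbX(p_j) = r_\bbX$ for all $j$, i.e. that every $\mu_{\bbY/\bbX} = r_\bbX$, which follows because $(I_{\bbY/\bbX})$ cannot contain a form of degree $< r_\bbX$ that is not already in $I_\bbX$ — any such form would live in $P_{r_\bbX-1}$, but $\HF_\bbX(r_\bbX-1)$ uses the whole of $P_{r_\bbX-1}$ so $(I_\bbX)_{r_\bbX-1}=0$, and a nonzero form in $(I_{\bbY/\bbX})_{r_\bbX-1}$ would drop the Hilbert function of $\bbY$ below the bound forced by $\deg(\bbY) = \deg(\bbX) - \kappa_j$ and the fact that in degree $r_\bbX-1$ the scheme $\bbY$ (having $\le \deg(\bbX)$ points) must still have $\HF_\bbY(r_\bbX-1) = \binom{n+r_\bbX-1}{n}$ since $\deg(\bbY) \ge \binom{n+r_\bbX-1}{n}$. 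This forces $\alpha_{\bbY/\bbX} \ge r_\bbX$, hence $\mu_{\bbY/\bbX} = r_\bbX$ for all maximal $p_j$-subschemes, giving $\deg_\bbX(p_j) = r_\bbX$ and the CB-property.

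For the statement about $\vartheta_\bbX$, the point is that $I_\bbX$ is generated in degrees $r_\bbX+1$ and $r_\bbX+2$, so every entry of the Jacobian matrix $\left(\partial F_j/\partial x_i\right)$ is homogeneous of degree $r_\bbX$ or $r_\bbX+1$, and any $n\times n$ minor that is nonzero must actually involve... here one has to be careful, since minors of mixed-degree columns have degrees between $nr_\bbX$ and $n(r_\bbX+1)$. The right formulation is: $\vartheta_\bbX$ is generated by homogeneous elements of degree $\le nr_\bbX$ in general (Remark~\ref{remark:3differents}), and here it is \emph{nonzero} in degree $nr_\bbX$ and zero in degrees $< nr_\bbX$, so it is generated in degree exactly $nr_\bbX$. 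Nonvanishing in degree $nr_\bbX$: since $\bbX$ is in generic position it is a complete intersection at... no — better, I would show $\HF_{\vartheta_\bbX}(nr_\bbX) \ne 0$ directly by exhibiting an $n$-minor of the Jacobian of degree $nr_\bbX$ that is nonzero in $R$. Take $n$ of the degree-$(r_\bbX+1)$ generators $F_1,\dots,F_n$ that form (after passing to $S$) a regular sequence locally at some complete-intersection point — generic position with $\deg(\bbX) = \binom{n+r_\bbX-1}{n}$ makes the homogeneous maximal ideal's behaviour tight enough that such a selection exists and its Jacobian determinant is a nonzero form of degree $n r_\bbX$ in $R$ (its dehomogenization is the socle generator at that point, by Proposition~\ref{prop:cipoint-ci}). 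Vanishing below $nr_\bbX$: any minor has degree $\ge$ (number of degree-$r_\bbX$ entries) and one checks the minimal possible degree $nr_\bbX$ is only achieved by picking all $n$ columns from degree-$(r_\bbX+1)$ generators; a minor of degree exactly $n r_\bbX$ whose image in $R$ might be nonzero can only come from those, so $(\vartheta_\bbX)_i = 0$ for $i < nr_\bbX$. \textbf{The main obstacle} I anticipate is this last dimension-counting / regular-sequence argument: one must show not merely that $\vartheta_\bbX$ is nonzero but that it first becomes nonzero precisely in degree $nr_\bbX$, which requires knowing that among the minimal generators of $I_\bbX$ there are $n$ of them of degree $r_\bbX+1$ whose Jacobian determinant does not vanish in $R$ — equivalently, that $\bbX$ has at least one complete-intersection point and that the CI structure there is witnessed by degree-$(r_\bbX+1)$ equations. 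This is where the extremal numerical hypothesis $\deg(\bbX) = \binom{n+\alpha_\bbX - 1}{n}$ does real work, and I would spend most of the proof there, quite possibly by reducing to $K$ algebraically closed and analyzing the $\alpha_\bbX$-dimensional linear system $(I_\bbX)_{\alpha_\bbX}$ of quadrics/hypersurfaces cutting out $\bbX$.
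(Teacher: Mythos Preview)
Your argument has two gaps, one in each half of the lemma, and both are fixed by observations that make the proof much shorter than what you outline.

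\textbf{CB-scheme part.} Your deduction that $\HF_{\bbY}(r_\bbX-1)=\binom{n+r_\bbX-1}{n}$ ``since $\deg(\bbY)\ge \binom{n+r_\bbX-1}{n}$'' is invalid: a lower bound on the degree of a scheme gives no lower bound on its Hilbert function in a fixed degree (think of many points on a line). So the argument as written is circular --- you are assuming $(I_\bbY)_{r_\bbX-1}=0$ in order to prove it. The paper's approach avoids Hilbert-function gymnastics entirely: after reducing to $K$-rational support, take any nonzero $F\in (I_\bbY)_{\alpha_{\bbY/\bbX}}$ and a linear form $L$ through $p_j$. Since $F$ lands in the socle of $\calO_{\bbX,p_j}$ and $L$ lies in $\fm_{\bbX,p_j}$, the product $FL$ vanishes on all of $\bbX$, so $FL\in (I_\bbX)_{\alpha_{\bbY/\bbX}+1}$. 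This forces $\alpha_{\bbY/\bbX}+1\ge \alpha_\bbX=r_\bbX+1$, hence $\alpha_{\bbY/\bbX}=r_\bbX$ and $(I_{\bbY/\bbX})_{r_\bbX-1}=0<\kappa_j$.

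\textbf{K\"ahler different part.} You are working too hard, and your ``main obstacle'' is not an obstacle at all. You wrote that $I_\bbX$ is generated in degrees $r_\bbX+1$ and $r_\bbX+2$, but in fact it is generated \emph{only} in degree $r_\bbX+1$: Remark~\ref{remark:3differents} (via \cite{GM1984}) gives a minimal generating set in degrees $\le r_\bbX+1$, and since $\alpha_\bbX=r_\bbX+1$ there are no generators of lower degree. Hence every entry of the Jacobian matrix is homogeneous of degree exactly $r_\bbX$, every $n$-minor has degree exactly $nr_\bbX$, and $\vartheta_\bbX$ is generated by its degree-$nr_\bbX$ piece. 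The lemma does not assert $\vartheta_\bbX\ne 0$; nonvanishing (indeed $\HF_{\vartheta_\bbX}(nr_\bbX)=\deg(\bbX)$ in the reduced case) is supplied separately by Proposition~\ref{prop_HF-Kdiff} when the lemma is applied in Proposition~\ref{prop:GenPos-KDiff-CBP}. So the entire paragraph about exhibiting a complete-intersection point and a nonzero Jacobian determinant is unnecessary.
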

\begin{proof}
Observe that $\alpha_\bbX = r_\bbX+1$, and it follows that
$\vartheta_\bbX$ is generated in degree $nr_\bbX$.
The generic position property is clearly invariant under
an extension of base fields, so is the Cayley-Bacharach property 
(see \cite[Corollary~4.9]{KLR2019}). This enables us to assume that
the scheme $\bbX$ has $K$-rational support.
Now let $j\in\{1,...,s\}$ and let $\bbY$ be a maximal 
$p_j$-subscheme of~$\bbX$.
If $F \in (I_\bbY)_{\alpha_{\bbY/\bbX}}$ and
$L \in P_1$ is a linear form through the point $p_j$,
then $FL \in I_{\bbX}$.
This implies $r_\bbX=\alpha_{\bbX}-1 \le \alpha_{\bbY/\bbX} \le r_{\bbX}$.
Thus $\alpha_{\bbY/\bbX} = r_{\bbX}$, especially, 
$\dim_K(I_{\bbY/\bbX})_{r_\bbX-1} =0< 1=\kappa_j$. Therefore 
$\bbX$ is a CB-scheme.
\end{proof}

It is not hard to give an example to show that a $0$-dimensional scheme 
$\bbX \subseteq \bbP^n_K$ in generic position is in general 
not necessary to be a CB-scheme.

Let us consider the subring $\widetilde{R} = S[x_0]
= (\prod_{j=1}^s \calO_{\bbX,p_j})[x_0]$ of $Q^h(R)$.
We have $\widetilde{R}_i\cong R_i$ as $K$-vector spaces
for $i\ge r_\bbX$.  
The \textit{conductor} of $R$ in~$\widetilde{R}$ is the ideal 
$$
\fF_{\widetilde{R}/R}= \{\, f\in Q^h(R) \mid
f\cdot \widetilde{R} \subseteq R \,\}.
$$
When the scheme $\bbX$ is reduced, the ring $\widetilde{R}$ is the 
integral closure of $R$ in its full quotient ring, and hence 
$\fF_{\widetilde{R}/R}$ is the conductor of $R$ in its integral
closure in the traditional sense. Further, $\fF_{\widetilde{R}/R}$
is an ideal of both $R$ and $\widetilde{R}$.  
A characterization of CB-schemes in terms of the conductor 
is given by the following proposition (see \cite[Theorem~5.4]{KLL2019}).

\begin{proposition}\label{prop:CBP-Conductor}
The scheme $\bbX$ is a CB-scheme if and only if 
$\fF_{\widetilde{R}/R} = \bigoplus_{i\geq r_{\bbX}} R_i$.
\end{proposition}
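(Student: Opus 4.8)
The plan is to prove both inclusions by analyzing the behavior of the conductor degree by degree and translating the CB-condition into a statement about separators. Since $\widetilde{R}_i \cong R_i$ for $i \ge r_\bbX$, we have $R_i = \widetilde{R}_i$ for all $i \ge r_\bbX$ inside $Q^h(R)$, so $\bigoplus_{i \ge r_\bbX} R_i$ is automatically an ideal of $\widetilde{R}$ contained in $R$; hence $\bigoplus_{i \ge r_\bbX} R_i \subseteq \fF_{\widetilde{R}/R}$ always. Thus the content of the proposition is the reverse inclusion $\fF_{\widetilde{R}/R} \subseteq \bigoplus_{i \ge r_\bbX} R_i$ precisely when $\bbX$ is a CB-scheme, together with a demonstration that this inclusion fails when $\bbX$ is not a CB-scheme. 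Since the problem is invariant under field extension (both the conductor degrees and the CB-property behave well under $K \subseteq L$, using \cite[Corollary~4.9]{KLR2019} for the latter), I would reduce at the outset to the case where $\bbX$ has $K$-rational support, so that all $\kappa_j = 1$ and the machinery of Remark~\ref{remSec2.3} applies.

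For the forward direction, suppose $\bbX$ is a CB-scheme; I want to show $(\fF_{\widetilde{R}/R})_i = 0$ for $i < r_\bbX$. Take a homogeneous $f \in (\fF_{\widetilde{R}/R})_i$ with $i < r_\bbX$, and suppose $f \ne 0$. Write $\tilde\imath(f) = (a_1, \dots, a_s)x_0^i \in S[x_0]$ with not all $a_j$ zero, say $a_j \ne 0$. Using the idempotent decomposition $S \cong \prod_k \calO_{\bbX,p_k}$, the element $e_j \in \widetilde{R}$ corresponding to the $j$-th idempotent (times a suitable power of $x_0$, lying in degree $\ge r_\bbX$) satisfies $f \cdot e_j \in R$ by the conductor property, and its dehomogenization is supported only at $p_j$. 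This produces a nonzero element of $R$ of degree $< r_\bbX + (\text{that power})$ whose vanishing ideal properties exhibit it as (a multiple of) a separator of the maximal $p_j$-subscheme $\bbY_j$, forcing $\mu_{\bbY_j/\bbX} < r_\bbX$, hence $\deg_\bbX(p_j) < r_\bbX$, contradicting the CB-hypothesis. The bookkeeping here — getting the degrees to line up so that the resulting element genuinely witnesses a small-degree separator — is the delicate part.

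For the converse, suppose $\fF_{\widetilde{R}/R} = \bigoplus_{i \ge r_\bbX} R_i$; I must show $\bbX$ is a CB-scheme, i.e. $\deg_\bbX(p_j) = r_\bbX$ for all $j$. Fix $j$ and a maximal $p_j$-subscheme $\bbY = \bbY_j$ with minimal separator $f^*_j$ of degree $\mu := \mu_{\bbY/\bbX} = \deg_\bbX(p_j)$; by Lemma~\ref{lem:Sep-MaxSubsch} we have $\mu \le r_\bbX$, and I want $\mu = r_\bbX$. The element $\tilde\imath^{-1}((0,\dots,0,1,0,\dots,0)x_0^{\mu}) \cdot (\text{unit adjustments})$, or more precisely the minimal separator $f^*_j \in R_\mu$, behaves like an idempotent-supported element: its product with any homogeneous element of $\widetilde{R}$ of any degree lands back in $R$, because multiplying by $f^*_j$ kills all components except the one at $p_j$, where it lands in the principal socle ideal $\langle s_j \rangle \cong I_{\bbY/\bbX}$ which sits inside $R$. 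Hence $f^*_j \in \fF_{\widetilde{R}/R}$, so by hypothesis $f^*_j \in \bigoplus_{i \ge r_\bbX} R_i$, forcing $\mu = \deg(f^*_j) \ge r_\bbX$, hence $\mu = r_\bbX$. Since $j$ was arbitrary, $\bbX$ is a CB-scheme.

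The main obstacle I anticipate is making rigorous the claim that the (minimal) separator $f^*_j$, or the idempotent-supported elements used in the forward direction, genuinely lie in the conductor: one must verify that for \emph{every} homogeneous $h \in \widetilde{R}$ (of arbitrarily large degree), the product $f^*_j \cdot h$ — a priori only an element of $Q^h(R)$ — actually lies in the image $\tilde\imath(R)$, not merely in $S[x_0]$. This requires using that $f^*_j$ dehomogenizes into $\bigcap_{k \ne j}\fq_k$ together with the socle element $s_j$ at $p_j$, so that $f^*_j \cdot h$ has dehomogenization in $\langle s_j \rangle \subseteq \bigcap_{k\ne j}\fq_k^{\,c} \cdot (\cdots)$ — in other words, invoking the correspondence from \cite[Proposition~3.2]{KL2017} between $I_{\bbY/\bbX}$ and $\langle s_j\rangle$ to control exactly which elements of $S[x_0]$ pull back to $R$. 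I would isolate this as a preliminary lemma ("a minimal separator lies in the conductor") before assembling the two directions.
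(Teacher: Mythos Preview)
The paper does not give its own proof of this proposition; it simply cites \cite[Theorem~5.4]{KLL2019}. So there is no in-paper argument to compare against, and your proposal should be judged on its own.

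Your overall strategy is sound and would succeed. The observation that $\bigoplus_{i\ge r_\bbX} R_i \subseteq \fF_{\widetilde{R}/R}$ always holds is correct, and the reduction to $K$-rational support is legitimate (the conductor is $\Ann_{\widetilde R}(\widetilde R/R)$, which base-changes well under the flat extension $K\subseteq L$). Your converse direction is essentially complete: with $\kappa_j=1$, the minimal separator $f_j^*$ satisfies $\tilde\imath(f_j^*)=(0,\dots,s_j,\dots,0)x_0^{\mu}$, and for any $h=(h_1,\dots,h_s)x_0^k\in\widetilde R_k$ one has $s_jh_j\in K\cdot s_j$ since $s_j$ is a socle element and $K(p_j)=K$; hence $f_j^*h$ is a $K$-multiple of $x_0^k f_j^*\in R$, giving $f_j^*\in\fF_{\widetilde R/R}$ and forcing $\mu\ge r_\bbX$.

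The forward direction has two fixable wrinkles. First, the parenthetical ``times a suitable power of $x_0$, lying in degree $\ge r_\bbX$'' is unnecessary and muddles the degrees: the idempotent $e_j$ already lies in $\widetilde R_0$, so $f\cdot e_j\in\fF_{\widetilde R/R}\subseteq R$ directly, in degree~$i$. Second, you do not say how to pass from an arbitrary nonzero $a_j\in\calO_{\bbX,p_j}$ to a socle element, which is what is needed to produce a separator. The missing step is: choose $b\in\calO_{\bbX,p_j}$ with $a_jb$ a nonzero socle element (possible since the socle is essential in an Artinian local ring), and multiply by $b\in\widetilde R_0$ to get $f\cdot(e_jb)\in R_i$ with $\tilde\imath$-image $(0,\dots,a_jb,\dots,0)x_0^i$; this exhibits $\mu(a_jb)\le i<r_\bbX$, hence $\mu_{\bbY/\bbX}<r_\bbX$ for the corresponding maximal $p_j$-subscheme, contradicting CB. With these two adjustments your argument is clean and complete.
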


Now we state and prove the converse of Lemma~\ref{lemSec3.2}
for reduced 0-dimensional schemes.

\begin{proposition} \label{prop:GenPos-KDiff-CBP}
Let $\bbX$ be a reduced 0-dimensional scheme in $\bbP^n_K$. 
Then the following conditions are equivalent.
\begin{enumerate}
\item[(a)] $\bbX$ is in generic position with 
$\deg(\bbX)=\binom{n+\alpha_{\bbX}-1}{n}$.
\item[(b)]
The Hilbert function of K\"{a}hler different is given by
$$
\HF_{\vartheta_\bbX}(i) =
		\begin{cases}
		0 & \textrm{if} \ i < nr_{\bbX}, \\
		\deg(\bbX) & \textrm{if} \ i \ge nr_{\bbX}.
		\end{cases}
$$
\item[(c)] $\bbX$ is a CB-scheme and
$\vartheta_\bbX = \fF^n_{\widetilde{R}/R}$.
\end{enumerate}
\end{proposition}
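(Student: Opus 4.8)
My plan is to prove the cycle of implications (a) $\Rightarrow$ (b) $\Rightarrow$ (c) $\Rightarrow$ (a). Throughout I may reduce to the case that $\bbX$ has $K$-rational support, since all three conditions are invariant under base field extension (for (a) and (c) this is recorded in the excerpt; for (b) one uses $\HF_{\vartheta_\bbX(L\otimes_K R/L[x_0])}=\HF_{\vartheta_\bbX}$ from the proof of Proposition~\ref{prop_HF-Kdiff}). So assume $\bbX=\{p_1,\dots,p_s\}$ with all $p_j$ $K$-rational; note $\deg(\bbX)=s$ and each $\kappa_j=1$.

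For (a) $\Rightarrow$ (b): by Lemma~\ref{lemSec3.2}, hypothesis (a) already gives that $\bbX$ is a CB-scheme and that $\vartheta_\bbX$ is generated in degree $nr_\bbX$, so $\HF_{\vartheta_\bbX}(i)=0$ for $i<nr_\bbX$. Since $\bbX$ is reduced, Corollary~\ref{cor:LocallyCI}(c) gives $\HP_{\vartheta_\bbX}=\deg(\bbX)=s$ and $\ri(\vartheta_\bbX)\le nr_\bbX$. Combining these, $\HF_{\vartheta_\bbX}(i)=s$ for all $i\ge nr_\bbX$, which is exactly (b). (The substantive input here is the argument from the proof of Proposition~\ref{prop_HF-Kdiff}: with $\alpha_\bbX=r_\bbX+1$ one has linear forms $L$ through $p_j$ with $L F_j\in I_\bbX$ for a minimal separator $F_j$ of degree $r_\bbX$, whence $(f_j^*)^n\in\vartheta_\bbX$ lives in degree $nr_\bbX$; and the $f_j^*$ are separated by the points, so the $(f_j^*)^n$ are $K$-independent, forcing $\HF_{\vartheta_\bbX}(nr_\bbX)=s$.)

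For (b) $\Rightarrow$ (c): from (b), $\HP_{\vartheta_\bbX}=s=\deg(\bbX)=\sum_j\dim_K K(p_j)$, so by Corollary~\ref{cor:LocallyCI}(b) $\bbX$ is a locally complete intersection; since $\bbX$ is reduced this is automatic, but more importantly $\vartheta_\bbX(\calO_{\bbX,p_j}/K)\ne 0$ for every $j$. To see $\bbX$ is a CB-scheme I use Proposition~\ref{prop:CBP-Conductor}: I must show $\fF_{\widetilde R/R}=\bigoplus_{i\ge r_\bbX}R_i$. In the reduced $K$-rational case, $\widetilde R$ has $K$-basis (in degree $i\ge r_\bbX$) the separators $x_0^{i-r_\bbX}f_j$ of Remark~\ref{remSec2.3}, and the conductor is governed by the degrees $r_j:=\deg_\bbX(p_j)$ of the $p_j$; concretely $\fF_{\widetilde R/R}\cap$(the $j$-th component) starts in degree $r_j$. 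Now, since $\bbX$ is reduced, $\vartheta_\bbX$ is contained in the Dedekind different, and for reduced schemes the Dedekind different is (after extending scalars) $\bigoplus_i \langle\dots\rangle$ computed from the separators; in particular $\vartheta_\bbX\subseteq \fF_{\widetilde R/R}$ always, but more is true: $(\vartheta_\bbX)_{nr_\bbX}$ surjects onto $\widetilde R/R$ in that degree by (b) (it has dimension $s$, the full dimension of the $j$-th components), forcing each $r_j\le nr_\bbX$... this is not yet $r_j=r_\bbX$. \emph{The cleanest route} is instead: (b) says $\HF_{\vartheta_\bbX}(nr_\bbX)=s$, and since always $\vartheta_\bbX\subseteq\fF^n_{\widetilde R/R}$ (each local K\"ahler different is the socle, which squares-times-$n$ into the conductor — see below), dimension count forces $(\vartheta_\bbX)_{nr_\bbX}=(\fF^n_{\widetilde R/R})_{nr_\bbX}$, and then $\fF^n_{\widetilde R/R}$ is generated in degree $nr_\bbX$ too; taking $n$-th roots (componentwise, using that $\widetilde R$ is a product of fields times $K[x_0]$) forces $\fF_{\widetilde R/R}$ to be generated in degree $r_\bbX$, i.e.\ $\fF_{\widetilde R/R}=\bigoplus_{i\ge r_\bbX}R_i$, so $\bbX$ is a CB-scheme and simultaneously $\vartheta_\bbX=\fF^n_{\widetilde R/R}$. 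This will be the technical heart of the argument and where I expect to spend the most care.

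For (c) $\Rightarrow$ (a): assume $\bbX$ is a CB-scheme with $\vartheta_\bbX=\fF^n_{\widetilde R/R}$. By Proposition~\ref{prop:CBP-Conductor}, $\fF_{\widetilde R/R}=\bigoplus_{i\ge r_\bbX}R_i$, so $\fF^n_{\widetilde R/R}$, computed inside $\widetilde R$, is $\bigoplus_{i\ge nr_\bbX}\widetilde R_i$ intersected appropriately with $R$; in degrees $i\ge nr_\bbX$ this is all of $\widetilde R_i\cong R_i$, and in lower degrees it vanishes. So $\HF_{\vartheta_\bbX}(i)=0$ for $i<nr_\bbX$, meaning $\vartheta_\bbX$ is generated in degree $\ge nr_\bbX$. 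On the other hand, Remark~\ref{remark:3differents} says $\vartheta_\bbX$ is generated in degrees $\le nr_\bbX$ — wait, that only gives $\le nr_\bbX$ if $I_\bbX$ is generated in degrees $\le r_\bbX+1$, which needs $\alpha_\bbX\le r_\bbX+1$; combined with $\vartheta_\bbX$ vanishing below degree $nr_\bbX$ this forces the generators of $I_\bbX$ to sit in degrees $r_\bbX$ and $r_\bbX+1$ only, hence $\alpha_\bbX\ge r_\bbX$, which is precisely the statement that $\bbX$ is in generic position. Finally, $\HP_{\vartheta_\bbX}=\deg(\bbX)$ (reduced case, Corollary~\ref{cor:LocallyCI}(c)) matched against $\dim_K(\fF^n_{\widetilde R/R})_{nr_\bbX}=\deg(\bbX)$ forces $\alpha_\bbX=r_\bbX+1$ exactly (if $\alpha_\bbX=r_\bbX$, some minimal generator of $I_\bbX$ would have degree $r_\bbX$ and one checks $\vartheta_\bbX$ would then be nonzero below degree $nr_\bbX$, a contradiction), i.e.\ $\deg(\bbX)=\binom{n+\alpha_\bbX-1}{n}=\binom{n+r_\bbX}{n}$. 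That is condition (a).

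The main obstacle, as flagged, is the passage (b) $\Rightarrow$ (c): proving the exact equality $\vartheta_\bbX=\fF^n_{\widetilde R/R}$ and extracting the CB-property from it. The key facts I will lean on are (i) for reduced $\bbX$ the K\"ahler different is contained in the Dedekind different, which in turn is contained in $\fF^n_{\widetilde R/R}$ — more precisely, locally $\vartheta_\bbX(\calO_{\bbX,p_j}/K)$ is the socle of $\calO_{\bbX,p_j}$, which equals $\calO_{\bbX,p_j}$ when $p_j$ is a reduced point, so globally $\vartheta_\bbX(S/K)=S=\widetilde R/\langle x_0-1\rangle$, i.e.\ $\vartheta_\bbX$ has full Hilbert polynomial — and (ii) an analysis of how $\fF^n_{\widetilde R/R}$ sits graded inside $R$, using that $\widetilde R=S[x_0]$ with $S$ a product of fields. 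I will need to be careful that "taking $n$-th roots" of the graded generation degree is legitimate, which it is because in each factor $K[x_0]$ of $\widetilde R$ (after separating the $s$ components) the conductor ideal is $x_0^{r_j-r_\bbX}K[x_0]\cdot(\text{unit})$ and raising to the $n$-th power multiplies the exponent by $n$ while CB-ness forces all $r_j=r_\bbX$.
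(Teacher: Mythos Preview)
Your implication (a) $\Rightarrow$ (b) is correct and matches the paper. The trouble is that the remainder of your cycle conceals, rather than avoids, the one genuinely hard step: showing that $\HF_{\vartheta_\bbX}(i)=0$ for $i<nr_\bbX$ forces $\alpha_\bbX>r_\bbX$. You invoke this twice without proof. In your (c) $\Rightarrow$ (a) you correctly derive (b) from (c), but then write ``if $\alpha_\bbX=r_\bbX$, \dots\ one checks $\vartheta_\bbX$ would then be nonzero below degree $nr_\bbX$.'' That ``one checks'' \emph{is} the proposition. A low-degree generator $F\in (I_\bbX)_{\alpha_\bbX}$ contributes columns of low degree to the Jacobian, but every $n$-minor involving that column could vanish identically in~$R$; nothing you have written rules this out.

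Your (b) $\Rightarrow$ (c) has an additional gap. The containment $\vartheta_\bbX\subseteq \fF^n_{\widetilde R/R}$ is neither proved in the paper nor obvious (the known inclusions go $\vartheta_N^n\subseteq\vartheta_\bbX\subseteq\vartheta_N=\vartheta_D^\sigma$ for reduced $\bbX$, and none of these is $\fF^n$). Even granting it, your dimension count only gives $(\vartheta_\bbX)_{nr_\bbX}=(\fF^n_{\widetilde R/R})_{nr_\bbX}=R_{nr_\bbX}$; this says nothing about $(\fF^n_{\widetilde R/R})_i$ for $i<nr_\bbX$, so you cannot conclude that $\fF^n$ is \emph{generated} in degree $nr_\bbX$, and the ``$n$-th root'' step never gets off the ground. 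In fact $\fF_{\widetilde R/R}\supseteq\bigoplus_{i\ge r_\bbX}R_i$ always holds, so what you extract is vacuous.

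The paper proceeds differently: it proves (a) $\Leftrightarrow$ (b) first, putting all the work into (b) $\Rightarrow$ (a). Assuming $\alpha_\bbX\le r_\bbX$, it uses the graded Primbasissatz to choose a regular sequence $F_1,\dots,F_n$ among the minimal generators of $I_\bbX$ so that $\langle F_1,\dots,F_n\rangle$ defines a complete intersection reduced at every point of $\bbX$; then $\tfrac{\partial(F_1,\dots,F_n)}{\partial(x_1,\dots,x_n)}$ is nonzero at every $p_j$. If some $\deg F_i\le r_\bbX$ this Jacobian already lies in $(\vartheta_\bbX)_{\le nr_\bbX-1}\setminus\{0\}$. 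Otherwise all $\deg F_i=r_\bbX+1$, and one picks $F_{n+1}\in (I_\bbX)_{\alpha_\bbX}$; a row-reduction of the matrix $(\partial F_i/\partial x_k(p_1))$ produces $G_2,\dots,G_n\in\langle F_1,\dots,F_n\rangle$ such that $\tfrac{\partial(F_{n+1},G_2,\dots,G_n)}{\partial(x_1,\dots,x_n)}(p_1)\ne 0$, again yielding a nonzero element of $\vartheta_\bbX$ of degree $\le nr_\bbX-1$. This is exactly the argument your ``one checks'' would need to supply. With (a) $\Leftrightarrow$ (b) in hand, (b) $\Leftrightarrow$ (c) is then immediate from Lemma~\ref{lemSec3.2} and Proposition~\ref{prop:CBP-Conductor}.
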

\begin{proof}
We see that the implication ``(a)$\Rightarrow$(b)'' 
follows from Proposition~\ref{prop_HF-Kdiff} and Lemma~\ref{lemSec3.2}.
Now we prove ``(b)$\Rightarrow$(a)''. Suppose that
$\vartheta_\bbX = \ideal{(\vartheta_\bbX)_{nr_\bbX}}$
and $\alpha_{\bbX} \le r_{\bbX}$.
Under an extension of base fields, we may assume that $\bbX$
has $K$-rational support, i.e., that $\bbX=\{p_1,...,p_s\}$ is 
a set of distinct $K$-rational points. 
Due to Remark~\ref{remark:3differents}, let $\{F_1, \dots, F_r\}$
be a minimal homogeneous system of generators of $I_\bbX$ such that
$\deg(F_j) \le r_{\bbX}+1$ for all $j=1,...,r$. In view of the graded
version of the Primbasissatz \cite[Proposition~5.1]{DK1999}, 
we may assume without loss of generality that
$\{F_1, \dots,F_n\}$ is a $P$-regular sequence and the ideal
$\ideal{F_1,...,F_n}$ defines a 0-dimensional complete intersection 
containing $\bbX$ which is reduced at the points of~$\bbX$.
By \cite[Theorem~10.12]{Kun1986}, the Jacobian determinant
$\tfrac{\partial(F_1, \dots,F_n)}{\partial(x_1, \dots,x_n)}$
does not vanish at any point of~$\bbX$.
If there is an index $j \in \{1, \dots, n\}$ such that
$\deg(F_j) \le r_{\bbX}$, then
$\deg(\tfrac{\partial(F_1, \dots, F_n)}{\partial(x_1, \dots, x_n)})
\le nr_{\bbX}-1$. So, we have
$\tfrac{\partial(F_1, \dots, F_n)}{\partial(x_1, \dots, x_n)}
\in \vartheta_\bbX\setminus\{0\}$, and this contradicts
the assumption.

Now we consider the case $\deg(F_j) = r_{\bbX}+1$ for
$j = 1, \dots, n$. Since $\alpha_{\bbX} \le r_{\bbX}$,
there is an element $F_j \in (I_{\bbX})_{\alpha_{\bbX}}$
for some $j \ge n+1$, say $F_{n+1}$.
By the assumption on the characteristic of $K$ and by Euler's rule,
we may assume that $\frac{\partial F_{n+1}}{\partial X_1} \ne 0$.
Clearly, $\frac{\partial F_{n+1}}{\partial X_1} \notin I_{\bbX}$,
as $\deg(\frac{\partial F_{n+1}}{\partial X_1}) < \alpha_{\bbX}$.
So, there is a point~$p_k$ of~$\bbX$ such that
$\frac{\partial F_{n+1}}{\partial x_1}(p_k) \ne 0$.
Without loss of generality, we can assume that
$\frac{\partial F_{n+1}}{\partial x_1}(p_1) \ne 0$.
Set $\calV_i := \big(\frac{\partial F_i}{\partial x_1}(p_1), \dots,
\frac{\partial F_i}{\partial x_n}(p_1)\big)\in K^n$
for $i = 1, \dots, n+1$ and
$$
\calV :=
	\begin{pmatrix}
	\frac{\partial F_1}{\partial x_1}(p_1)& \cdots &
	\frac{\partial F_1}{\partial x_n}(p_1)\\
	\vdots & \ddots &\vdots \\
	\frac{\partial F_n}{\partial x_1}(p_1)& \cdots &
	\frac{\partial F_n}{\partial x_n}(p_1)
	\end{pmatrix}.
$$
Since the matrix $\calV$ is invertible, by using elementary row operations, 
we can transform the matrix~$\calV$ into an upper-triangular 
matrix~$\calW=(w_{ij})$ such that its diagonal entries are all non-zero.
Let $\calW_i$ denote the $i$-th row of the matrix~$\calW$.
Then there are $\lambda_{ij}\in K$, $i,j\in\{1, \dots,n\}$, such that
$$
\begin{aligned}
	\calW_i &=(w_{i1}, \dots,w_{in})
	=\lambda_{i1}\calV_1+\cdots+\lambda_{in}\calV_n \\
	&= \big(\sum_{j=1}^n \lambda_{ij}\frac{\partial F_j}{\partial x_1}(p_1),
	\dots, \sum_{j=1}^n	\lambda_{ij}\frac{\partial F_j}{\partial x_n}(p_1)
	\big).
\end{aligned}
$$
For every $i \in \{2, \dots,n\}$, let
$G_i := \lambda_{i1}F_1+\cdots+\lambda_{in}F_n
\in (I_{\bbX})_{r_{\bbX}+1}\setminus\{0\}$.
Then we have $\frac{\partial G_i}{\partial x_k}(p_1)
=\sum_{j=1}^n \lambda_{ij}\frac{\partial F_j}
{\partial x_k}(p_1)=w_{ik}$
for all $i = 2, \dots, n$ and $k = 1, \dots, n$.
It follows that
$$
\begin{aligned}
\frac{\partial(F_{n+1},G_2, \dots,G_n)}{\partial(x_1, \dots,x_n)}(p_1)
&= \det
\begin{pmatrix}
	\frac{\partial F_{n+1}}{\partial x_1}(p_1)&
	\frac{\partial F_{n+1}}{\partial x_2}(p_1)&
	\cdots & \frac{\partial F_{n+1}}{\partial x_n}(p_1)\\
	0& w_{22} & \cdots &w_{2n}  \\
	\vdots & \vdots & \ddots &\vdots  \\
	0& 0&\cdots & w_{nn}
\end{pmatrix}\\
&=\frac{\partial F_{n+1}}{\partial x_1}(p_1)w_{22}\cdots w_{nn}\ne 0,
\end{aligned}
$$
and hence
$\tfrac{\partial(F_{n+1},G_2, \dots,G_n)}{\partial(x_1, \dots,x_n)}
\in (\vartheta_\bbX)_{\le nr_{\bbX}-1}\setminus\{0\}$,
and this provides a contradiction again.
Thus we must have $\alpha_{\bbX} = r_{\bbX}+1$ and (a) follows.

Next we prove the equivalence of (b) and (c). Suppose that $\bbX$ 
is a CB-scheme and $\vartheta_\bbX=\fF^n_{\widetilde{R}/R}$.
By Proposition~\ref{prop:CBP-Conductor}, we have $\fF_{\widetilde{R}/R}
= \bigoplus_{i \ge r_\bbX} R_i$, and so 
$\vartheta_\bbX = \fF^n_{\widetilde{R}/R}
=\bigoplus_{i \ge nr_\bbX} R_i$.
Thus the Hilbert function of~$\vartheta_\bbX$ is given as claim~(b).
Conversely, if $\vartheta_\bbX = \bigoplus_{i\geq nr_\bbX} R_i$,
then the equivalence of (a) and (b) implies that
$\bbX$ is in generic position with $s=\binom{n+\alpha_{\bbX}-1}{n}$, 
and so $\bbX$ is a CB-scheme by~Lemma~\ref{lemSec3.2}.
Therefore, an application of Proposition~\ref{prop:CBP-Conductor} yields that 
$\fF^n_{\widetilde{R}/R} =\bigoplus_{i\geq nr_{\bbX}} R_i=\vartheta_\bbX$.
\end{proof}

In view of Proposition~\ref{prop:GenPos-KDiff-CBP}, if the Hilbert function
of~$\vartheta_\bbX$ in degree $nr_{\bbX}-1$ is zero
then~$\bbX$ is a CB-scheme. This observation can be improved as follows.

\begin{corollary}\label{cor:HF-KDiff-CBP}
If a reduced 0-dimensional scheme $\bbX\subseteq\bbP^n_K$
has $\HF_{\vartheta_\bbX}(nr_\bbX -n)=0$, then it is a CB-scheme.
\end{corollary}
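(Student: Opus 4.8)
The plan is to reduce to a CB-criterion in terms of the vanishing ideals of maximal subschemes, and then use the hypothesis $\HF_{\vartheta_\bbX}(nr_\bbX-n)=0$ to rule out the existence of a ``bad'' separator. First I would pass to an extension of the base field so that $\bbX=\{p_1,\dots,p_s\}$ is a set of distinct $K$-rational points; this is legitimate because both the Cayley--Bacharach property and the Hilbert function of $\vartheta_\bbX$ are invariant under base change (as used in the proof of Proposition~\ref{prop_HF-Kdiff} and in Lemma~\ref{lemSec3.2}). Under this reduction $\kappa_j=1$ for all $j$, and by the criterion recalled just before Lemma~\ref{lemSec3.2}, $\bbX$ fails to be a CB-scheme exactly when there is an index $j$ and a maximal $p_j$-subscheme $\bbY$ with $(I_{\bbY/\bbX})_{r_\bbX-1}\ne 0$; equivalently, $\deg_\bbX(p_j)<r_\bbX$, i.e.\ the minimal separator $f_j^*$ of $\bbY$ in $\bbX$ has $\deg(f_j^*)=\mu_{\bbY/\bbX}\le r_\bbX-1$.

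The key step is then the following: if such an $f_j^*$ exists with $\deg(f_j^*)\le r_\bbX-1$, I produce a nonzero element of $\vartheta_\bbX$ in degree $n\deg(f_j^*)\le n(r_\bbX-1)=nr_\bbX-n$, contradicting the hypothesis. This is done exactly as in the last part of the proof of Proposition~\ref{prop_HF-Kdiff}: after renaming we may take $p_j=(1:0:\cdots:0)$, and choosing a representative $F\in P$ of $f_j^*$ with $\deg(F)=\deg(f_j^*)$, the fact that $f_j^*$ vanishes at every point of $\bbX$ except $p_j$ together with the (reduced, hence smooth) hypothesis forces $X_iF\in I_\bbX$ for $i=1,\dots,n$. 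Computing the Jacobian determinant of $(X_1F,\dots,X_nF)$ with respect to $x_1,\dots,x_n$ gives
$$
\frac{\partial(X_1F,\dots,X_nF)}{\partial(x_1,\dots,x_n)}=F^n+G,\qquad G\in I_\bbX,
$$
so $(f_j^*)^n\in\vartheta_\bbX$, and since $f_j^*(p_j)\ne 0$ this element is nonzero; its degree is $n\deg(f_j^*)\le nr_\bbX-n$, so $\HF_{\vartheta_\bbX}(nr_\bbX-n)\ne 0$, the desired contradiction. Hence $\deg_\bbX(p_j)=r_\bbX$ for every $j$, which is precisely the statement that $\bbX$ is a CB-scheme.

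I expect the only subtle point to be justifying the claim $X_iF\in I_\bbX$, i.e.\ that a representative of the minimal separator $f_j^*$, when multiplied by any linear form vanishing at $p_j$, lies in $I_\bbX$. This is where the \emph{reducedness} of $\bbX$ is essential: the minimal separator is characterized (Lemma~\ref{lem:Sep-MaxSubsch} together with the discussion of separators in Section~\ref{Sec2}) by vanishing at all $p_k$ with $k\ne j$ and being nonzero at $p_j$, and since each point is a reduced (smooth) $K$-rational point, vanishing at $p_j$ of the linear form $X_i$ together with vanishing of $F$ at all other points gives membership of $X_iF$ in the radical ideal $I_\bbX$ degree by degree. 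Everything else is either the base-change reduction or the verbatim Jacobian computation already carried out in the proof of Proposition~\ref{prop_HF-Kdiff}, so no further obstacles arise.
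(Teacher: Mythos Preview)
Your argument is correct. The paper's own proof is much terser: after the same base-change reduction to a set of distinct $K$-rational points, it simply invokes \cite[Lemma~3.7]{KLL2015} and stops. Your proposal effectively unpacks that external citation inside the present paper, reusing verbatim the Jacobian computation $(f_j^*)^n\in\vartheta_\bbX$ already carried out in the proof of Proposition~\ref{prop_HF-Kdiff}; the only extra ingredient you add is the observation that a nonzero element of $\vartheta_\bbX$ in degree $n\deg(f_j^*)\le nr_\bbX-n$ yields, after multiplication by a power of the non-zerodivisor $x_0$, a nonzero element in degree exactly $nr_\bbX-n$. So the underlying route is the same, but your version is self-contained within the paper rather than deferring to \cite{KLL2015}; the trade-off is length versus independence from the external reference. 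Your justification of $X_iF\in I_\bbX$ via reducedness is exactly right and is the point where the hypothesis on $\bbX$ enters.
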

\begin{proof}
Under an extension of base fields, we may assume that 
$\bbX=\{p_1,...,p_s\}$ is a set of distinct $K$-rational points.
So, the claim follows from \cite[Lemma~3.7]{KLL2015}.
\end{proof}

Unfortunately, the converse of Corollary~\ref{cor:HF-KDiff-CBP} 
fails to hold in general. For instance, a complete intersection set 
of four $K$-rational points in $\bbP^2_K$ is a CB-scheme with
regularity index equal 2, but its K\"ahler different at degree 2
does not vanish.

Next, we consider a reduced 0-dimensional scheme $\bbX\subseteq\bbP^n_K$
in generic position with $\binom{n+\alpha_{\bbX}-1}{n}< \deg(\bbX) 
<\binom{n+\alpha_{\bbX}}{n}$. Let $t$ be the number of minimal generators
of degree~$\alpha_{\bbX}$ of~$I_{\bbX}$. Then $\alpha_\bbX = r_\bbX$
and  $t = \binom{n+\alpha_{\bbX}}{n}-\deg(\bbX)$.
An application of Corollary~\ref{cor:HF-KDiff-CBP} provides the 
following consequence.

\begin{corollary}
If $r_\bbX=1$ or $t < n$, then $\bbX$ is a CB-scheme.
\end{corollary}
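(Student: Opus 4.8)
The plan is to reduce everything to a single application of Corollary~\ref{cor:HF-KDiff-CBP}, which guarantees that a reduced $0$-dimensional scheme is a CB-scheme as soon as $\HF_{\vartheta_\bbX}(nr_\bbX-n)=0$. So the entire task becomes: show that, under either hypothesis, the K\"ahler different vanishes in degree $nr_\bbX-n=n(r_\bbX-1)=n\alpha_\bbX-n$.

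First I would observe that the case $r_\bbX=1$ is actually subsumed under the condition $t<n$. Indeed, when $\alpha_\bbX=r_\bbX=1$ the strict inequalities $1=\binom{n}{n}<\deg(\bbX)<\binom{n+1}{n}=n+1$ of the standing setup, together with the running assumption $\deg(\bbX)\ge 2$, give $t=\binom{n+1}{n}-\deg(\bbX)=(n+1)-\deg(\bbX)\le n-1<n$. Hence it suffices to treat the case $t<n$.

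For the case $t<n$, I would use that $\bbX$ in generic position with $\alpha_\bbX=r_\bbX$ forces $I_\bbX$ to be minimally generated by homogeneous polynomials of degrees $\alpha_\bbX$ and $\alpha_\bbX+1$ only, with exactly $t$ of them in degree $\alpha_\bbX$ (recalled in the paragraph preceding the corollary). Since $\vartheta_\bbX$ is independent of the chosen generating system (Remark~\ref{remark:3differents}), it is generated by the $n$-minors of the Jacobian matrix $(\partial F_j/\partial x_i)$ of such a minimal system $\{F_1,\dots,F_r\}$. An $n$-minor uses $n$ of the columns; if $k$ of those columns come from the $t$ generators of degree $\alpha_\bbX$ (partials of degree $\alpha_\bbX-1$) and the other $n-k$ from generators of degree $\alpha_\bbX+1$ (partials of degree $\alpha_\bbX$), then that minor is homogeneous of degree $k(\alpha_\bbX-1)+(n-k)\alpha_\bbX=n\alpha_\bbX-k$. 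Because $k\le t<n$, every such minor has degree $\ge n\alpha_\bbX-t>n\alpha_\bbX-n=nr_\bbX-n$. Consequently $(\vartheta_\bbX)_i=0$ for all $i\le nr_\bbX-n$; in particular $\HF_{\vartheta_\bbX}(nr_\bbX-n)=0$, and Corollary~\ref{cor:HF-KDiff-CBP} yields that $\bbX$ is a CB-scheme.

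There is no real obstacle beyond the degree bookkeeping: the only points requiring care are to insist on a \emph{minimal} system of generators so that the number of degree-$\alpha_\bbX$ generators is genuinely $t$, and to invoke the independence of $\vartheta_\bbX$ from the generating system before carrying out the count. Everything else is the elementary remark that an $n\times n$ minor of a Jacobian matrix all of whose columns have large degree cannot land in a degree as low as $nr_\bbX-n$ unless at least $n$ of the generators have the minimal degree $\alpha_\bbX$.
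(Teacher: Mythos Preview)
Your proposal is correct and follows exactly the approach the paper intends: the paper simply states that the corollary is ``an application of Corollary~\ref{cor:HF-KDiff-CBP}'', and you have supplied precisely the degree bookkeeping on the Jacobian minors needed to see that $\HF_{\vartheta_\bbX}(nr_\bbX-n)=0$ when $t<n$, together with the observation that $r_\bbX=1$ forces $t<n$ under the standing assumption $\deg(\bbX)\ge 2$. There is nothing to add or correct.
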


Suppose now that $t \ge n$ and $r_\bbX\ge 2$ and that 
$\bbX=\{p_1,...,p_s\}$ is a set of distinct $K$-rational points
(after an extension of base fields).
Write $(I_{\bbX})_{\alpha_\bbX}=\ideal{F_1, \dots, F_t}_K$ and 
$
(\vartheta_\bbX)_{n(r_{\bbX}-1)} \!=\! \ideal{h_1, \dots, h_{\delta}}_K,
$ 
where $h_i = \frac{\partial (F_{i_1}, \dots,F_{i_n})}
{\partial(x_1, \dots,x_n)}$ for 
$\{i_1, \dots, i_n\} \!\subseteq\! \{1, \dots, t\}$ 
and $\delta=\binom{t}{n}$.
For $j \in \{1, \dots, s\}$, we put 
$$
\calA :=
\begin{pmatrix}
h_1(p_1)& h_2(p_1)&\cdots & h_{\delta}(p_1)\\
h_1(p_2)& h_2(p_2)&\cdots & h_{\delta}(p_s)\\
\vdots& \vdots&\ddots & \vdots\\
h_1(p_s)& h_2(p_s)&\cdots & h_{\delta}(p_s)
\end{pmatrix},
\ \, \calA_j :=
\begin{pmatrix}
\calA & \calE_j
\end{pmatrix},
$$
where $\calE_j =(0,...,0,1,0,...,0)^{\rm tr}$ 
with $1$ occurring in the $j$-th position. 

\begin{proposition} \label{propSec4.7}
In the above setting, if $\rank(\calA_j) > \rank(\calA)$ for all 
$j = 1, \dots, s$, then $\bbX$ is a CB-scheme. The converse holds
true for $n=2$ or $(n,r_\bbX)=(3,2)$. 
\end{proposition}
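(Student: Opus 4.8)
The plan is to translate both the rank condition and the Cayley--Bacharach property into statements about the evaluation maps $\mathrm{ev}\colon R_m\to K^s$, $\bar g\mapsto(g(p_1),\dots,g(p_s))$, each of which is injective because $\bbX$ is reduced (so $I_\bbX$ is saturated). Since $(\vartheta_\bbX)_{n(r_\bbX-1)}=\langle h_1,\dots,h_\delta\rangle_K$, the column space of $\calA$ is $\mathrm{ev}\big((\vartheta_\bbX)_{n(r_\bbX-1)}\big)$, so $\rank(\calA_j)>\rank(\calA)$ if and only if $\calE_j\notin\mathrm{ev}\big((\vartheta_\bbX)_{n(r_\bbX-1)}\big)$. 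Dually, writing $\bbY_j:=\bbX\setminus\{p_j\}$ for the unique maximal $p_j$-subscheme, $\bbX$ is a CB-scheme if and only if $\calE_j\notin\mathrm{ev}(R_{r_\bbX-1})$ for every $j$: a nonzero vector of $\mathrm{ev}(R_{r_\bbX-1})$ proportional to $\calE_j$ is exactly the evaluation vector of a separator of $\bbY_j$ in $\bbX$ of degree $r_\bbX-1<r_\bbX$, and conversely any separator of lower degree may be multiplied by a linear form not vanishing at $p_j$. Thus everything comes down to comparing the two subspaces $\mathrm{ev}(R_{r_\bbX-1})$ and $\mathrm{ev}\big((\vartheta_\bbX)_{n(r_\bbX-1)}\big)$ of $K^s$ at the standard basis vectors.

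For the implication ``rank condition $\Rightarrow$ CB-scheme'' I argue contrapositively. If $\bbX$ is not a CB-scheme, choose $j$ and $G\in P_{r_\bbX-1}$ with $G(p_k)=0$ for $k\ne j$ and $G(p_j)=1$, and choose coordinates so that $p_j=(1:0:\dots:0)$. Then $X_1G,\dots,X_nG\in(I_\bbX)_{r_\bbX}$, so $w:=\frac{\partial(X_1G,\dots,X_nG)}{\partial(x_1,\dots,x_n)}$ is an $n$-minor of the Jacobian matrix of the generating system $\{F_1,\dots,F_r,X_1G,\dots,X_nG\}$ of $I_\bbX$; hence $w\in\vartheta_\bbX$, and $\deg(w)=n(r_\bbX-1)$ gives $w\in(\vartheta_\bbX)_{n(r_\bbX-1)}$. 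Exactly as in the proof of Proposition~\ref{prop_HF-Kdiff}, $\frac{\partial(X_1G,\dots,X_nG)}{\partial(X_1,\dots,X_n)}=\det\big(G\cdot\mathrm{Id}_n+(X_i\,\tfrac{\partial G}{\partial X_l})_{1\le i,l\le n}\big)$; since $X_i(p_j)=0$ for $i\ge1$, this determinant takes the value $G(p_j)^n=1$ at $p_j$, whereas at $p_k$ ($k\ne j$) the factor $G(p_k)=0$ makes the matrix have rank $\le1$, so the determinant vanishes as $n\ge2$. Therefore $\mathrm{ev}(w)=\calE_j$, and writing $w=\sum_i\lambda_ih_i$ yields $\calE_j=\calA\cdot(\lambda_1,\dots,\lambda_\delta)^{\mathrm{tr}}$, i.e.\ $\rank(\calA_j)=\rank(\calA)$, contradicting the hypothesis.

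For the converse (the cases $n=2$ and $(n,r_\bbX)=(3,2)$) I argue contrapositively again: assume $\calE_j\in\mathrm{ev}\big((\vartheta_\bbX)_{n(r_\bbX-1)}\big)$ for some $j$ and pick $w\in(\vartheta_\bbX)_{n(r_\bbX-1)}$ with $\mathrm{ev}(w)=\calE_j$, so $w$ vanishes at every $p_k$ with $k\ne j$ but not at $p_j$. By Remark~\ref{remSec2.3}, $w$ equals $x_0^{\,n(r_\bbX-1)-\mu_{\bbY_j/\bbX}}$ times a minimal separator $f_j^*$ of $\bbY_j$ in $\bbX$, with $\mu_{\bbY_j/\bbX}=\deg(f_j^*)\le r_\bbX$. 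If $\mu_{\bbY_j/\bbX}<r_\bbX$ then $\bbX$ is not a CB-scheme and we are done; so assume $\mu_{\bbY_j/\bbX}=r_\bbX$ ($f_j^*$ of degree $r_\bbX$, not divisible by $x_0$) and derive a contradiction. The decisive step is to realize $w$ as the Jacobian of a $P$-regular sequence: $w=c\cdot\frac{\partial(Q_1,\dots,Q_n)}{\partial(x_1,\dots,x_n)}$ with $c\ne 0$ and $Q_1,\dots,Q_n\in(I_\bbX)_{r_\bbX}$ a regular sequence, i.e.\ $Q_1\wedge\dots\wedge Q_n$ a decomposable preimage of $w$ under the Jacobian map $\bigwedge^n(I_\bbX)_{r_\bbX}\to(\vartheta_\bbX)_{n(r_\bbX-1)}$. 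Then $\bbZ:=\calZ(Q_1,\dots,Q_n)\supseteq\bbX$ is a $0$-dimensional complete intersection of degree $r_\bbX^{\,n}$, and by \cite[Theorem~10.12]{Kun1986} (where the hypothesis on $\mathrm{char}(K)$ is used) $w(q)\ne 0$ exactly when $\bbZ$ is reduced at $q\in\bbZ$. So $\bbZ$ is reduced at $p_j$ and non-reduced at each of the $\deg(\bbX)-1$ other points of $\bbX$, whence
$$
r_\bbX^{\,n}=\sum_{q\in\bbZ}\dim_K\calO_{\bbZ,q}\ \ge\ 1+2\big(\deg(\bbX)-1\big),
$$
i.e.\ $\deg(\bbX)\le\tfrac12\big(r_\bbX^{\,n}+1\big)$. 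This contradicts the hypothesis $\deg(\bbX)>\binom{n+r_\bbX-1}{n}$ precisely when $r_\bbX^{\,n}\le 2\binom{n+r_\bbX-1}{n}$, and an elementary check shows the latter inequality holds exactly for $n=2$ (any $r_\bbX\ge2$) and for $(n,r_\bbX)=(3,2)$ — which is why the converse is restricted to these cases.

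The hard part is the step that represents $w$ as the Jacobian of a regular sequence of degree-$r_\bbX$ forms in $I_\bbX$. It is immediate when $\dim_K(I_\bbX)_{r_\bbX}\le n+1$, since then every element of $\bigwedge^n(I_\bbX)_{r_\bbX}$ is decomposable; otherwise one must verify that the fibre of the Jacobian map over $w$ meets the cone of decomposable tensors — a dimension count in the Grassmannian, made possible here because $n(r_\bbX-1)<2r_\bbX\le\ri(\vartheta_\bbX)$ for exactly these $n,r_\bbX$, which forces $\dim_K(\vartheta_\bbX)_{n(r_\bbX-1)}<\deg(\bbX)$ and hence $\mathrm{ev}\big((\vartheta_\bbX)_{n(r_\bbX-1)}\big)\subsetneq K^s$ — and one must deal with the degenerate case where the $Q_i$ share a common factor $D$: then $w$ is divisible by $D$, and the Bézout bound $r_\bbX^{\,n}$ has to be replaced by a residual (linkage) estimate supported off $\calZ(D)$. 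These refinements are manageable for $n=2$ and $(n,r_\bbX)=(3,2)$ but fail to yield a contradiction once $r_\bbX^{\,n}>2\binom{n+r_\bbX-1}{n}$, matching the scope of the statement.
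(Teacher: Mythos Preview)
Your forward direction is correct and essentially the same as the paper's: both construct an element of $(\vartheta_\bbX)_{n(r_\bbX-1)}$ whose evaluation vector is $\calE_j$. The paper does this via $(f_j^*)^n$ (already shown to lie in $\vartheta_\bbX$ in the proof of Proposition~\ref{prop_HF-Kdiff}), while you compute the Jacobian of $X_1G,\dots,X_nG$ explicitly; these are the same construction up to normalization, and the multilinearity of the determinant in the rows justifies writing $w=\sum_i\lambda_ih_i$.

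The converse, however, has a genuine gap. Your ``decisive step''---realizing $w$ as $c\cdot\frac{\partial(Q_1,\dots,Q_n)}{\partial(x_1,\dots,x_n)}$ for a $P$-regular sequence $Q_1,\dots,Q_n\in(I_\bbX)_{r_\bbX}$---is neither proved nor true in general. An arbitrary element of $(\vartheta_\bbX)_{n(r_\bbX-1)}$ is a $K$-linear combination of Jacobians of $n$-tuples from $\{F_1,\dots,F_t\}$, and such a sum is typically not a single Jacobian once $t>n+1$: decomposable tensors form only a proper closed subvariety of $\bigwedge^n(I_\bbX)_{r_\bbX}$. Your proposed Grassmannian dimension count does not repair this (you give no argument that the fibre of the Jacobian map over $w$ meets the decomposable locus), and even when a decomposable preimage exists the $Q_i$ may share a common factor, so the degenerate case you defer is not a side issue but the heart of the matter. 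The B\'ezout inequality $\deg(\bbX)\le\tfrac12(r_\bbX^{\,n}+1)$ therefore rests on an unproved hypothesis.

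The paper's proof of the converse avoids all of this. From $\rank(\calA_j)=\rank(\calA)$ one solves $\calA\cdot\calY=\calE_j$ and reads off $x_0^{n(r_\bbX-1)-r_\bbX}f_j\in(\vartheta_\bbX)_{n(r_\bbX-1)}$. The restriction to $n=2$ or $(n,r_\bbX)=(3,2)$ enters \emph{only} through the numerical inequality $n(r_\bbX-1)\le 2r_\bbX-1$, which lets one multiply by a further power of $x_0$ to obtain $x_0^{r_\bbX-1}f_j\in(\vartheta_\bbX)_{2r_\bbX-1}$; one then invokes \cite[Corollary~4.6]{KLL2019}, which says this is impossible for a CB-scheme. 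No decomposition of $w$ as a single Jacobian is ever needed.
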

\begin{proof}
Suppose that $\rank(\calA_j) > \rank(\calA)$ for all
$j = 1, \dots, s$, and that $\bbX$ is not a CB-scheme.
For $j\in\{1,...,s\}$, let $f^*_j$ be the minimal separator
of $\bbX\setminus\{p_j\}$ in $\bbX$ with $f^*_j(p_j)=1$
and set $f_j:=x_0^{r_{\bbX}-\deg(f_j^*)}f_j^*$. 
Note that $n(r_{\bbX}-1) \ge r_{\bbX}$. 
For $i \in \{1, \dots, \delta\}$, Remark~\ref{remSec2.3} 
enables us to write 
$$
h_i =  x_0^{n(r_{\bbX}-1)-r_{\bbX}}\sum_{j=1}^s h_i(p_j)f_j.
$$ 
Since $\bbX$ is not a CB-scheme, there exists $j\in\{1,...,s\}$ 
such that $\deg(f^*_j)<r_\bbX$.
Without loss of generality, we may assume that $\deg(f^*_1)<r_\bbX$.
As in the proof of Proposition~\ref{prop_HF-Kdiff}, we have
$
x_0^{n\deg(f_1^*)-r_{\bbX}}f_1 = (f_1^*)^n \in
(\vartheta_\bbX)_{n\deg(f_1^*)},
$
and hence
$x_0^{n(r_{\bbX}-1)-r_{\bbX}}f_1\in(\vartheta_\bbX)_{n(r_{\bbX}-1)}$.
Consequently, there exist $c_1, \dots, c_{\delta} \in K$,
not all equal to zero, such that
$$
\begin{aligned}
	x_0^{n(r_{\bbX}-1)-r_{\bbX}}f_1
	&= c_1h_1+\cdots+c_{\delta}h_{\delta}\\
	&= x_0^{n(r_{\bbX}-1)-r_{\bbX}}
	\big( c_1\sum_{j=1}^s h_1(p_j)f_j + \cdots 
	+c_{\delta}\sum_{j=1}^sh_{\delta}(p_j)f_j
	\big)\\
	&= x_0^{n(r_{\bbX}-1)-r_{\bbX}}
	\big( f_1 \sum_{k=1}^{\delta} h_k(p_1)c_k + \cdots
	+ f_s \sum_{k=1}^{\delta} h_k(p_s)c_k
	\big).
\end{aligned}
$$
This gives a system of linear equations in
$c_1, \dots, c_{\delta}$ as follows
$$
\sum_{k=1}^{\delta} h_k(p_1)c_k=1,\quad 
\sum_{k=1}^{\delta} h_k(p_j)c_k=0,\quad j = 2, \dots, s.
$$
Therefore $\rank(\calA) = \rank(\calA_1)$, in contradiction to
our assumption.

Now let $\bbX$ be a CB-scheme and suppose that
$\rank(\calA_j) = \rank(\calA)$ for some $j \in \{1, \dots, s\}$.
Without loss of generality, we assume that $\rank(\calA_1) = \rank(\calA)$.
Let $\calC = (c_1, \dots, c_{\delta}) \in K^{\delta}$
be a root of the system of linear equations
$\calA \cdot \calY = \calE_1$, where
$\calY = (y_1, \dots, y_{\delta})^{\mathrm{tr}}$.
Then, in~$R$, we have 
$
x_0^{n(r_{\bbX}-1)-r_{\bbX}}f_1 -
\sum_{k=1}^{\delta} c_kh_k =0,
$
and so $x_0^{n(r_{\bbX}-1)-r_{\bbX}} f_1\in (\vartheta_\bbX)_{n(r_{\bbX}-1)}$.
By the assumption, $n(r_{\bbX}-1) \le 2r_{\bbX}-1$, and hence
$x_0^{r_{\bbX}-1} f_1 \in (\vartheta_\bbX)_{2r_{\bbX}-1}$.
By \cite[Corollary~4.6]{KLL2019}, $\bbX$ is not a CB-scheme, 
a contradiction.
\end{proof}

We obtain the following immediate consequence.

\begin{corollary} \label{corSec4.11}
Let $\bbX\subseteq\bbP^n_K$ be a set of $s$ distinct $K$-rational points
in generic position with $s = \binom{n+\alpha_{\bbX}}{n}-n$.
If $(I_{\bbX})_{\alpha_{\bbX}}$ contains a $P$-regular sequence 
of length~$n$ which meet in precisely $\alpha_{\bbX}^n$ distinct 
$K$-rational points, then~$\bbX$ is a CB-scheme.
\end{corollary}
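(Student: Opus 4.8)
The plan is to read this off Proposition~\ref{propSec4.7}. First I would check that the numerical hypothesis genuinely places us in the situation treated there. Since $\bbX$ is in generic position and $\deg(\bbX)=s$, the number $t$ of minimal generators of $I_\bbX$ in degree $\alpha_\bbX$ satisfies $t=\binom{n+\alpha_\bbX}{n}-\HF_\bbX(\alpha_\bbX)=\binom{n+\alpha_\bbX}{n}-s=n$; together with $\deg(\bbX)\ge 2$ this forces $\alpha_\bbX\ge 2$, hence $\binom{n+\alpha_\bbX-1}{n}<s<\binom{n+\alpha_\bbX}{n}$ and $\alpha_\bbX=r_\bbX\ge 2$. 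So $t=n$, $\delta=\binom{t}{n}=1$, and for any $K$-basis $\{F_1,\dots,F_n\}$ of $(I_\bbX)_{\alpha_\bbX}$ we have $(\vartheta_\bbX)_{n(r_\bbX-1)}=\ideal{h_1}_K$ with $h_1=\tfrac{\partial(F_1,\dots,F_n)}{\partial(x_1,\dots,x_n)}$; the matrix $\calA$ of Proposition~\ref{propSec4.7} is then the single column $(h_1(p_1),\dots,h_1(p_s))^{\mathrm{tr}}$, and $\calA_j$ is this column augmented by $\calE_j$.

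Next I would use the assumed regular sequence to show that the column $\calA$ has no zero entry. Pick a $P$-regular sequence $G_1,\dots,G_n\in(I_\bbX)_{\alpha_\bbX}$ whose common zero set $\bbW=\calZ(G_1,\dots,G_n)$ consists of $\alpha_\bbX^n$ distinct $K$-rational points; then $\bbW$ is a reduced $0$-dimensional complete intersection and $\bbX\subseteq\bbW$. A regular sequence of homogeneous forms of the same degree is $K$-linearly independent, so $G_1,\dots,G_n$ is another $K$-basis of the $n$-dimensional space $(I_\bbX)_{\alpha_\bbX}$; writing $G_i=\sum_j\lambda_{ij}F_j$ with $(\lambda_{ij})$ invertible over $K$, the chain rule gives $\tfrac{\partial(G_1,\dots,G_n)}{\partial(x_1,\dots,x_n)}=\det(\lambda_{ij})\,h_1$. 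Each $p_j$ lies on $\bbW$, which is reduced --- hence, by our hypothesis on $\operatorname{char}(K)$, smooth --- at $p_j$, so by \cite[Theorem~10.12]{Kun1986} the Jacobian determinant of $(G_1,\dots,G_n)$ does not vanish at $p_j$; since $\det(\lambda_{ij})\ne 0$, it follows that $h_1(p_j)\ne 0$ for all $j=1,\dots,s$.

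Finally I would verify the rank condition. With all entries of $\calA$ nonzero we have $\rank(\calA)=1$, while $\calE_j$ has exactly one nonzero coordinate and $s=\deg(\bbX)\ge 2$, so $\calE_j$ is not a scalar multiple of $\calA$ and $\rank(\calA_j)=2>\rank(\calA)$ for every $j\in\{1,\dots,s\}$. Proposition~\ref{propSec4.7} then yields that $\bbX$ is a CB-scheme. I expect the only step requiring genuine care to be the first one --- confirming that the hypotheses really do land us in the $t=n$, $\delta=1$ case of Proposition~\ref{propSec4.7} (in particular that $\alpha_\bbX=r_\bbX\ge 2$ and $\binom{n+\alpha_\bbX-1}{n}<s<\binom{n+\alpha_\bbX}{n}$); once that bookkeeping is in place the remainder is short, the substance being carried entirely by Proposition~\ref{propSec4.7} and \cite[Theorem~10.12]{Kun1986}.
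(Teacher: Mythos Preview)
Your proposal is correct and follows exactly the route the paper intends: the corollary is stated as an ``immediate consequence'' of Proposition~\ref{propSec4.7}, and you have filled in precisely the bookkeeping needed to see this --- namely that $s=\binom{n+\alpha_\bbX}{n}-n$ forces $t=n$, $\delta=1$, and $r_\bbX=\alpha_\bbX\ge 2$, after which the single Jacobian $h_1$ is nonvanishing at every point of~$\bbX$ by \cite[Theorem~10.12]{Kun1986}, making the rank inequality automatic. Your care in checking $\binom{n+\alpha_\bbX-1}{n}<s$ and in relating the given regular sequence $G_1,\dots,G_n$ to an arbitrary basis $F_1,\dots,F_n$ via a change-of-basis determinant is appropriate and complete.
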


\begin{lemma}
Let $\bbX\subseteq \bbP^n_K$ be a 0-dimensional scheme 
with $\tbinom{n+(\alpha-1)}{n} < \deg(\bbX) < \tbinom{n+\alpha}{n}$
for some $\alpha> 1$.
If $\HF_{\bbX}(\alpha) = \deg(\bbX)$
and $\dim_K(I_{\bbX})_{\alpha} \le n$, then $\bbX$ is in generic position
with $\deg(\bbX) = \binom{n+\alpha}{n} - \dim_K(I_{\bbX})_{\alpha}$.
\end{lemma}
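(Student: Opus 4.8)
The plan is to reduce the statement to the single claim $(I_\bbX)_{\alpha-1}=0$, that is, $\alpha_\bbX\ge\alpha$, and to extract this from the hypothesis $\dim_K(I_\bbX)_\alpha\le n$. Throughout I would use the identities $\HF_\bbX(i)=\binom{n+i}{n}-\dim_K(I_\bbX)_i$ for $i\ge 0$, the fact that $\HF_\bbX$ is non-decreasing and bounded above by $\deg(\bbX)$, and that $P$ is an integral domain.

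First I would prove $(I_\bbX)_{\alpha-1}=0$ by contradiction. If $G$ were a nonzero homogeneous element of $(I_\bbX)_{\alpha-1}$, then the $n+1$ forms $X_0G,X_1G,\dots,X_nG$ would all lie in $(I_\bbX)_\alpha$; and if $\sum_{i=0}^n c_iX_iG=0$ in $P$ with $c_i\in K$, then $(\sum_i c_iX_i)G=0$, so $\sum_i c_iX_i=0$ because $P$ is a domain and $G\ne 0$, forcing $c_0=\dots=c_n=0$. Hence these $n+1$ forms are $K$-linearly independent, giving $\dim_K(I_\bbX)_\alpha\ge n+1$, a contradiction. (Even the degenerate reading $\alpha=1$ causes no trouble, since a nonzero element of $(I_\bbX)_0$ would force $I_\bbX=P$ and $\bbX=\emptyset$.) Multiplying any lower-degree form by a suitable power of $X_0$ and using again that $P$ is a domain, I then get $(I_\bbX)_i=0$ for every $i\le\alpha-1$, so $\alpha_\bbX\ge\alpha$; since $\deg(\bbX)<\binom{n+\alpha}{n}$ already yields $(I_\bbX)_\alpha\ne 0$, in fact $\alpha_\bbX=\alpha$, and then $r_\bbX=\alpha$ as well.

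It then remains to verify the generic-position identity $\HF_\bbX(i)=\min\{\deg(\bbX),\binom{n+i}{n}\}$ degree by degree. For $i\le\alpha-1$ one has $\HF_\bbX(i)=\binom{n+i}{n}\le\binom{n+\alpha-1}{n}<\deg(\bbX)$, so the minimum is $\binom{n+i}{n}$; for $i=\alpha$ the hypothesis $\HF_\bbX(\alpha)=\deg(\bbX)$ together with $\binom{n+\alpha}{n}>\deg(\bbX)$ gives the correct value; for $i>\alpha$ monotonicity and the bound $\HF_\bbX(i)\le\deg(\bbX)$ force $\HF_\bbX(i)=\deg(\bbX)$, and the cases $i<0$ are trivial. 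Finally $\deg(\bbX)=\HF_\bbX(\alpha)=\binom{n+\alpha}{n}-\dim_K(I_\bbX)_\alpha$ is just the hypothesis rewritten. The only real content is the linear-independence trick in the second step; everything else is routine bookkeeping with the Hilbert function, so I do not anticipate a genuine obstacle here.
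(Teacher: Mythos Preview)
Your proposal is correct and follows essentially the same approach as the paper: both arguments hinge on the observation that a nonzero form $G\in(I_\bbX)_{\alpha-1}$ would produce $n+1$ linearly independent elements $X_0G,\dots,X_nG\in(I_\bbX)_\alpha$, contradicting $\dim_K(I_\bbX)_\alpha\le n$. The paper first pins down $r_\bbX=\alpha$ and then invokes the equivalence ``generic position $\iff \alpha_\bbX\ge r_\bbX$'', whereas you verify the Hilbert-function identity degree by degree, but this is only a cosmetic difference in bookkeeping.
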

\begin{proof}
Observe that $r_{\bbX} \le \alpha$ and
$\HF_{\bbX}(\alpha-1) \le \binom{n+(\alpha-1)}{n} < \deg(\bbX)$.
This implies $r_{\bbX} = \alpha$.
Suppose that $\alpha_{\bbX} < r_{\bbX}$.
Let $F$ be a non-zero form of degree $r_{\bbX}-1$
in $(I_{\bbX})_{r_{\bbX}-1}$.
Then $X_0F, X_1F,  \dots, X_nF$ are $K$-linearly independent forms
of degree~$r_{\bbX}$ in~$(I_{\bbX})_{r_{\bbX}}$. Thus
$\dim_K(I_{\bbX})_{r_{\bbX}} = \dim_K(I_{\bbX})_{\alpha} > n$,
a contradiction. Hence we conclude that $\alpha_{\bbX} \ge r_{\bbX}$,
and therefore $\bbX$ is in generic position.
\end{proof}

\begin{proposition} \label{propSec4.10}
Let $\bbX\subseteq \bbP^n_K$ be a reduced 0-dimensional scheme 
such that $\tbinom{n+(\alpha-1)}{n}< \deg(\bbX) < \tbinom{n+\alpha}{n}$
for some $\alpha> 1$.
Assume that the following conditions are satisfied:
\begin{enumerate}
	\item[(i)] $\HF_{\bbX}(\alpha) = \deg(\bbX)$,

	\item[(ii)] $\dim_K(I_{\bbX})_{\alpha} \ge n$,

	\item[(iii)]
	there are $n$ independent forms in $(I_{\bbX})_{\alpha}$
	which meet in precisely $\alpha^n$ distinct $L$-rational points
	in $\bbP^n_L$ for a field extension $K\subseteq L$.
\end{enumerate}
Then $\bbX$ is in generic position if and only if
$\HF_{\vartheta_\bbX}(n\alpha - n - 1) = 0$.
\end{proposition}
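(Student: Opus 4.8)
The plan is to prove the two implications separately, following the template established in the proof of Proposition~\ref{prop:GenPos-KDiff-CBP} and in Corollary~\ref{cor:HF-KDiff-CBP}. First I would note that both the generic position property and the vanishing of a Hilbert-function value of $\vartheta_\bbX$ are invariant under base field extension (the latter by \cite[Rules~10.3]{Kun1986}, as used in the proof of Proposition~\ref{prop_HF-Kdiff}), so we may pass to the field $L$ in hypothesis~(iii) and assume $\bbX=\{p_1,\dots,p_s\}$ is a set of distinct $L$-rational points and that $(I_\bbX)_\alpha$ contains $n$ independent forms $F_1,\dots,F_n$ forming a $P$-regular sequence whose complete intersection $\bbW\supseteq\bbX$ consists of exactly $\alpha^n$ distinct $L$-rational points. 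The preceding lemma already handles the direction where $\dim_K(I_\bbX)_\alpha\le n$: combined with hypothesis~(ii) this forces $\dim_K(I_\bbX)_\alpha=n$, so in that case $\bbX$ is automatically in generic position and one only needs to check the Hilbert function statement; in general, however, $\dim_K(I_\bbX)_\alpha$ may exceed $n$, and the point of the proposition is exactly to detect generic position via the K\"ahler different in that range.

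For the forward direction, assume $\bbX$ is in generic position. Then $r_\bbX=\alpha$ (as in the lemma) and $\alpha_\bbX\ge r_\bbX$, so $(I_\bbX)_i=0$ for $i<\alpha$; in particular $\vartheta_\bbX$, being generated by $n$-minors of a Jacobian matrix of generators of degrees $\ge\alpha$, lives in degrees $\ge n(\alpha-1)$, and I would aim to show its degree-$(n\alpha-n-1)$ piece is zero. The key computation is to compare $\bbX$ with the complete intersection $\bbW=\calZ(F_1,\dots,F_n)$: the Jacobian determinant $J:=\tfrac{\partial(F_1,\dots,F_n)}{\partial(x_1,\dots,x_n)}$ is a form of degree $n(\alpha-1)$ that generates the K\"ahler different of $\bbW$, and by \cite[Theorem~10.12]{Kun1986} $J$ does not vanish at any point of $\bbW$, hence not at any point of $\bbX$. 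Since $\bbW$ is reduced and consists of $\alpha^n$ points, the value $\HF_{\vartheta_\bbW}(i)$ stabilizes exactly at $i=n(\alpha-1)$ with value $\alpha^n$; restricting to $\bbX\subseteq\bbW$ and using that the map $R_\bbW\twoheadrightarrow R$ is an isomorphism in degrees $\ge r_\bbX=\alpha$ on the one hand, while in degree $n\alpha-n-1<\alpha$ (this uses $n\ge 2$, $\alpha>1$, and the arithmetic $n\alpha-n-1=n(\alpha-1)-1<\alpha$ when $(n-1)(\alpha-1)<\alpha+1$, which needs care) any element of $(\vartheta_\bbX)_{n\alpha-n-1}$ would pull back. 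The cleaner route, mirroring Proposition~\ref{prop:GenPos-KDiff-CBP}, is: if $\HF_{\vartheta_\bbX}(n\alpha-n-1)\ne 0$ there is a nonzero minor built from generators of $I_\bbX$ of total degree $n\alpha-n-1$, forcing at least one generator to have degree $<\alpha$, i.e. $\alpha_\bbX<\alpha=r_\bbX$, contradicting generic position. For the converse, assume $\HF_{\vartheta_\bbX}(n\alpha-n-1)=0$; by Corollary~\ref{cor:HF-KDiff-CBP} (taking $r_\bbX\le\alpha$ and the vanishing in degree $n r_\bbX-n\le n\alpha-n$... here I must be careful the degrees line up), $\bbX$ is a CB-scheme, and then I would use hypotheses~(i)--(iii) together with the lemma-type argument: $(I_\bbX)_\alpha$ contains $F_1,\dots,F_n$, and were $\alpha_\bbX<r_\bbX=\alpha$, multiplying a degree-$(\alpha-1)$ element by $X_0,\dots,X_n$ as in the lemma produces $\ge n+1$ independent degree-$\alpha$ forms in $I_\bbX$, and then the Jacobian argument from the proof of Proposition~\ref{prop:GenPos-KDiff-CBP} — picking $n$ of these forms including one of degree $<\alpha$ — yields a nonzero element of $\vartheta_\bbX$ in degree $\le n\alpha-n-1$, contradicting the hypothesis; hence $\alpha_\bbX\ge r_\bbX$ and $\bbX$ is in generic position.

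The main obstacle I anticipate is the bookkeeping of degrees: one must verify that $n\alpha-n-1$ is genuinely the correct threshold — neither too large (so that generic position really forces vanishing there) nor too small (so that failure of generic position produces a nonzero K\"ahler different element in that exact degree) — and this is where hypotheses~(ii) and~(iii) are essential, since without a $P$-regular sequence of length $n$ inside $(I_\bbX)_\alpha$ one cannot produce the Jacobian minor of the predicted degree, and without $\dim_K(I_\bbX)_\alpha\ge n$ there may simply be too few forms. I would also need to check that the CB-scheme conclusion from Corollary~\ref{cor:HF-KDiff-CBP} is actually used, or whether the converse direction goes through directly; the role of condition~(iii)'s ``precisely $\alpha^n$ distinct points'' is to guarantee reducedness and separability of the ambient complete intersection so that \cite[Theorem~10.12]{Kun1986} applies and the Jacobian minor is a genuine non-zerodivisor modulo $I_\bbX$ at the relevant points, exactly as in Corollary~\ref{corSec4.11} and Proposition~\ref{propSec4.7}. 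The remaining steps — invariance under field extension, the structure of minimal generators of $I_\bbX$ in degrees $\alpha,\alpha+1$ for schemes in generic position, and the identification of $\vartheta_\bbW$ — are routine given the results already assembled in Sections~\ref{Sec2} and~\ref{Sec3}.
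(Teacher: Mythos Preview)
Your core approach matches the paper's: reduce to $L$-rational points, then for the forward direction use that generators of $I_\bbX$ have degree $\ge\alpha$ so all Jacobian $n$-minors have degree $\ge n(\alpha-1)$, and for the converse use the $F_1,\dots,F_n$ from hypothesis~(iii) together with a low-degree form $F_{n+1}\in(I_\bbX)_{\alpha_\bbX}$ and the row-reduction trick of Proposition~\ref{prop:GenPos-KDiff-CBP} to produce a nonzero minor of degree $\le n\alpha-n-1$.

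Two places where you overcomplicate things. First, your initial attempt at the forward direction via comparison with $\bbW$ and the inequality $n\alpha-n-1<\alpha$ is a dead end (that inequality fails for most $n,\alpha$); your ``cleaner route'' is the whole argument, and the paper does exactly that in one line. Second, the CB-scheme detour through Corollary~\ref{cor:HF-KDiff-CBP} is unnecessary, as you suspected: the converse goes through directly once you observe that $r_\bbX=\alpha$ is forced by hypothesis~(i) and the bound $\binom{n+\alpha-1}{n}<\deg(\bbX)$, independently of generic position. Also, the ``multiply by $X_0,\dots,X_n$'' step from the preceding lemma is not what feeds the Jacobian argument here --- you need the single form $F_{n+1}$ of degree $\alpha_\bbX<\alpha$ combined with (row-reduced combinations of) $F_2,\dots,F_n$, exactly as in the second case of the proof of Proposition~\ref{prop:GenPos-KDiff-CBP}; the role of hypothesis~(iii) is precisely to guarantee that the base Jacobian $\tfrac{\partial(F_1,\dots,F_n)}{\partial(x_1,\dots,x_n)}$ is nonzero at every point of $\bbX$, so that the matrix $\calV$ in that argument is invertible.
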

\begin{proof}
First we notice that, by \cite[Proposition~4.4]{GM1984},
for $s= \deg(\bbX)$ there is a non-empty open set in~$(\bbP^n_L)^s$
whose each point corresponds to a set of $s$ distinct
$L$-rational points satisfies conditions (ii)-(iii).
Secondly, both the generic position property and the Hilbert function
of $\vartheta_\bbX$ are invariant under the extension $K\subseteq L$,
we may assume that $L=K$ and $\bbX$ is a set of $K$-rational points.

Now suppose that $\bbX$ is in generic position. 
Then the condition (i) implies $r_{\bbX}=\alpha=\alpha_{\bbX}$.
Thus $\vartheta_\bbX$ is generated by homogeneous
elements of~degree $\ge n\alpha-n$,
and subsequently $\HF_{\vartheta_\bbX}(n\alpha-n-1)=0$.

Conversely, suppose that $\HF_{\vartheta_\bbX}(n\alpha-n-1)=0$.
Based on the conditions~(ii)-(iii), we let
$F_1,\dots, F_n \in (I_{\bbX})_{\alpha}$
be $n$ forms of degree~$\alpha$ having exactly $\alpha^n$
distinct common zeros in~$\bbP^n_K$.
Then $F_1, \dots,F_n$ form a $P$-regular sequence
by~\cite[Theorem~1.12]{KK1987}, and so the ideal
$\langle F_1, \dots,F_n \rangle$ defines
a complete intersection set $\bbW$ of $\alpha^n$ distinct
$K$-rational points and $\bbX\subseteq \bbW$. 
By \cite[Lemma~3.1]{KLL2015}, the homogeneous element
$\frac{\partial(F_1, \dots,F_n)}{\partial(x_1, \dots,x_n)}$
is a non-zerodivisor of~$P/\langle F_1, \dots,F_n \rangle$
of degree $n\alpha-n$. In particular,
it does not vanish at any point of~$\bbW$, and hence of~$\bbX$.
If $\alpha_{\bbX} < \alpha$, then we can argue analogously
as in the proof of Proposition~\ref{prop:GenPos-KDiff-CBP} 
to obtain a non-zero homogeneous element of degree 
$n\alpha-n-1$ in~$\vartheta_\bbX$.
This implies $\HF_{\vartheta_\bbX}(n\alpha-n-1) \ne 0$,
in contradiction to the assumption.
Therefore we must have $\alpha_{\bbX} = \alpha = r_{\bbX}$.
In other words, $\bbX$ is in generic position, as wanted.
\end{proof}

Next, we look at the Gorenstein property of a $0$-dimensional 
scheme $\bbX \subseteq \bbP^n_K$. Recall that $\bbX$ is  
\textit{locally Gorenstein} if the local ring $\mathcal{O}_{\bbX,p_j}$
is a local Gorenstein ring for $j=1,...,s$; and it is
\textit{arithmetically Gorenstein} if $R$ is a Gorenstein ring.
It is well-known that every (locally) complete intersection is  
(locally) arithmetically Gorenstein, and that 
every arithmetically Gorenstein scheme $\bbX$ is also
a locally Gorenstein CB-scheme (see \cite{KL2017}).

The number $\ell(R/\fF_{\widetilde{R}/R})$ is known as the
\textit{conductor colength} of~$\bbX$, 
where ``$\ell$'' denotes length (or dimension) as $K$-vector space.
The lengths $\ell(R/\fF_{\widetilde{R}/R})$ and $\ell(\widetilde{R}/R)$ 
are finite, since $f\cdot\widetilde{R}\subseteq R$ for all $f\in R_i$ 
and $i\ge r_\bbX$.
In our setting, the Ap\'{e}ry-Gorenstein-Samuel theorem 
(cf. \cite[Corollary~6.5]{Bas1963} or \cite[Section~3]{GO1981}) 
states that if $\bbX$ is reduced, then it is arithmetically Gorenstein if 
and only if $\ell(\widetilde{R}/R)=\ell(R/\mathfrak{F}_{\widetilde{R}/R})$.
When $\bbX$ is in generic position (not necessarily reduced),
we prove the following version of this theorem.

\begin{thm}[Ap\'{e}ry-Gorenstein-Samuel]\label{thm:Apery-Gorenstein-Samual}
Let $\bbX\subseteq \bbP^n_K$ be a $0$-dimensional locally Gorenstein 
scheme in generic position.
Then $\bbX$ is arithmetically Gorenstein if and only if 
$\ell(\widetilde{R}/R) = \ell(R/\mathfrak{F}_{\widetilde{R}/R})$.
\end{thm}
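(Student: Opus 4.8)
The plan is to prove the two implications separately. The implication ``arithmetically Gorenstein $\Rightarrow$ the length equality'' holds for an arbitrary $0$-dimensional scheme and is a duality computation; the converse is where generic position and local Gorensteinness enter essentially, and it is the substance of the statement.

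For the forward implication, suppose $R$ is Gorenstein, so its graded canonical module satisfies $\omega_R\cong R(a)$ for some $a\in\bbZ$. Apply $\Hom_R(-,\omega_R)$ to $0\to R\to\widetilde{R}\to\widetilde{R}/R\to 0$. Since $\widetilde{R}=S[x_0]$ is free over $K[x_0]$ it is a maximal Cohen--Macaulay $R$-module, so $\operatorname{Ext}^1_R(\widetilde{R},\omega_R)=0$; and since $\widetilde{R}/R$ has finite length while $\operatorname{depth}\omega_R=1$, we get $\Hom_R(\widetilde{R}/R,\omega_R)=0$. Hence there is an exact sequence $0\to\Hom_R(\widetilde{R},\omega_R)\to\omega_R\to\operatorname{Ext}^1_R(\widetilde{R}/R,\omega_R)\to 0$, whose right-hand term has the same $K$-dimension as $\widetilde{R}/R$ by graded local duality. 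Using $\omega_R\cong R(a)$ and the standard identification $\Hom_R(\widetilde{R},R)=\fF_{\widetilde{R}/R}$ inside $Q^h(R)$, the left-hand term is $\fF_{\widetilde{R}/R}(a)$, so $\ell(\widetilde{R}/R)=\ell\bigl(\omega_R/\Hom_R(\widetilde{R},\omega_R)\bigr)=\ell(R/\fF_{\widetilde{R}/R})$.

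For the converse, first reduce to the case that $\bbX$ has $K$-rational support: generic position, local Gorensteinness, and both $\ell(\widetilde{R}/R)$ and $\ell(R/\fF_{\widetilde{R}/R})$ are unaffected by extension of the base field, since $\widetilde{R}$ and $\fF_{\widetilde{R}/R}$ commute with flat base change and dimensions are preserved. Write $r=r_\bbX$. Because $\dim_K\widetilde{R}_i=\deg(\bbX)$ for every $i\ge 0$, $\widetilde{R}_i=R_i$ for $i\ge r$, and generic position gives $\HF_\bbX(i)=\binom{n+i}{n}$ for $i<r$, one finds
$$
\ell(\widetilde{R}/R)=\sum_{i=0}^{r-1}\Bigl(\deg(\bbX)-\binom{n+i}{n}\Bigr)=r\,\deg(\bbX)-\binom{n+r}{n+1}.
$$
On the other hand $\bigoplus_{i\ge r}R_i\subseteq\fF_{\widetilde{R}/R}$ always (for $f\in R_i$ with $i\ge r$ and homogeneous $w\in\widetilde{R}_j$ one has $fw\in\widetilde{R}_{i+j}=R_{i+j}$), so $\ell(R/\fF_{\widetilde{R}/R})\le\sum_{i=0}^{r-1}\HF_\bbX(i)=\binom{n+r}{n+1}$, with equality exactly when $\bbX$ is a CB-scheme, by Proposition~\ref{prop:CBP-Conductor}.

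Combining the hypothesis $\ell(\widetilde{R}/R)=\ell(R/\fF_{\widetilde{R}/R})$ with the last two facts gives $r\,\deg(\bbX)\le 2\binom{n+r}{n+1}=\tfrac{2(n+r)}{n+1}\binom{n+r-1}{n}$, while the definition of $r_\bbX$ forces $\deg(\bbX)>\binom{n+r-1}{n}$. A short elementary estimate (including an integrality check when $n=2$) shows these two bounds are simultaneously satisfiable only in tightly restricted cases: for $n\ge 2$ they force $r_\bbX\le 2$ with $\deg(\bbX)\in\{2,\,n+2\}$ (the case $n=1$ being trivial since $\bbX\subseteq\bbP^1$ is always a complete intersection), and in each surviving case the inequality for $\ell(R/\fF_{\widetilde{R}/R})$ must be an equality, so $\bbX$ is a CB-scheme, and its $h$-vector, being $(1,1)$ or $(1,n,1)$, is symmetric. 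A locally Gorenstein CB-scheme with symmetric Hilbert function is arithmetically Gorenstein (immediately when $\deg(\bbX)=2$, as then $R$ is a hypersurface ring, and by \cite{KLR2019} in general), which finishes the argument. The main obstacle is precisely this rigidity of the converse: one needs sharp two-sided control of the two colengths---the left from $\dim_K\widetilde{R}_i=\deg(\bbX)$ and the generic Hilbert function, the right from $\bigoplus_{i\ge r}R_i\subseteq\fF_{\widetilde{R}/R}$ and its sharpness criterion---together with a careful verification that the reduction to $K$-rational support is legitimate for $\widetilde{R}$, $\fF_{\widetilde{R}/R}$, and Gorensteinness at each point.
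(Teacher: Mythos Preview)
Your proposal is correct, and the converse direction follows a genuinely different route from the paper's own proof.

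For the forward implication, the paper argues directly: arithmetically Gorenstein implies CB with symmetric Hilbert function, so $\fF_{\widetilde{R}/R}=\bigoplus_{i\ge r_\bbX}R_i$ and both lengths equal $\sum_{i<r_\bbX}\HF_\bbX(i)$. Your duality argument via $\Hom_R(-,\omega_R)$ is an equally valid alternative.

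For the converse, both arguments begin with the same inequality $r_\bbX\cdot\deg(\bbX)\le 2\binom{n+r_\bbX}{n+1}$ coming from the length hypothesis together with $\bigoplus_{i\ge r_\bbX}R_i\subseteq\fF_{\widetilde{R}/R}$. From here the paths diverge. The paper proves the \emph{reverse} inequality as a separate lemma (Lemma~\ref{lemSec3.3}), forcing equality; this equality is then fed into a pointwise analysis of the conductor to conclude that $\bbX$ is a CB-scheme, after which a somewhat delicate construction in the canonical module produces an element $\varphi\in(\omega_R)_{1-r_\bbX}$ with $\Ann_R(\varphi)=0$, yielding an injection $R(r_\bbX-1)\hookrightarrow\omega_R$ and hence symmetry of $\HF_\bbX$. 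You instead combine the inequality with the trivial lower bound $\deg(\bbX)>\binom{n+r_\bbX-1}{n}$ and observe by elementary arithmetic that for $n\ge 2$ this forces $r_\bbX\le 2$ and $\deg(\bbX)\in\{2,n+2\}$, whence the $h$-vector is visibly $(1,1)$ or $(1,n,1)$; the CB property then drops out because the bound on $\ell(R/\fF_{\widetilde{R}/R})$ must be sharp. Both proofs finish by invoking \cite{KLR2019}.

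Your approach is more elementary---it bypasses Lemma~\ref{lemSec3.3} and the entire canonical-module construction---and it makes explicit the striking fact that arithmetically Gorenstein schemes in generic position exist only for $r_\bbX\le 2$ when $n\ge 2$. The paper's approach, while heavier, isolates reusable structural ingredients (the inequality of Lemma~\ref{lemSec3.3}, the separator analysis of $\omega_R$) that are of interest beyond this single statement. One small remark: your reduction to $K$-rational support is harmless but not actually needed by the rest of your argument.
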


In the proof of this theorem we use the following lemma.

\begin{lemma}\label{lemSec3.3}
If $\bbX$ is in generic position, then we have 
\begin{equation}\label{Equation3.4}
r_\bbX\cdot \deg(\bbX) \ge 2\sum_{i=0}^{r_\bbX-1}\HF_\bbX(i).
\end{equation}
\end{lemma}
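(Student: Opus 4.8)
The plan is to exploit the definition of generic position directly and compare the two sides of \eqref{Equation3.4} term by term. Since $\bbX$ is in generic position, $\HF_\bbX(i) = \min\{\deg(\bbX), \binom{n+i}{n}\}$, and in particular $\HF_\bbX(i)=\deg(\bbX)$ for $i\ge r_\bbX$ while $\HF_\bbX(i)=\binom{n+i}{n} < \deg(\bbX)$ for $0\le i \le r_\bbX-1$. Rewriting \eqref{Equation3.4}, the claim $r_\bbX\deg(\bbX)\ge 2\sum_{i=0}^{r_\bbX-1}\HF_\bbX(i)$ is equivalent to
\[
\sum_{i=0}^{r_\bbX-1}\bigl(\deg(\bbX) - \HF_\bbX(i)\bigr)\;\ge\;\sum_{i=0}^{r_\bbX-1}\HF_\bbX(i),
\]
i.e. to showing that the ``co-Hilbert'' values $\deg(\bbX)-\HF_\bbX(i)$ dominate the Hilbert values themselves when summed over $0\le i\le r_\bbX-1$. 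First I would pair the index $i$ with the index $r_\bbX-1-i$ and try to prove the pointwise-after-pairing inequality $\HF_\bbX(i) + \HF_\bbX(r_\bbX-1-i) \le \deg(\bbX)$ for each $0\le i\le r_\bbX-1$; summing this over $i$ gives exactly \eqref{Equation3.4} (each side of that pairing sum equals $r_\bbX\deg(\bbX)$ on the left and $2\sum \HF_\bbX$ on the right after reindexing).

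So the technical heart reduces to the binomial inequality
\[
\binom{n+i}{n} + \binom{n+r_\bbX-1-i}{n} \;\le\; \deg(\bbX)\qquad(0\le i\le r_\bbX-1),
\]
where I may use that $\deg(\bbX)\ge \binom{n+r_\bbX-1}{n}+1$ (from $\HF_\bbX(r_\bbX-1)<\deg(\bbX)$ and genericity) and also, when convenient, the sharper lower bound $\deg(\bbX)>\binom{n+\alpha_\bbX-1}{n}$ with $\alpha_\bbX\in\{r_\bbX,r_\bbX+1\}$. The key step is therefore the purely combinatorial estimate $\binom{n+i}{n}+\binom{n+r-1-i}{n}\le \binom{n+r-1}{n}+1$ for $0\le i\le r-1$ and $n\ge 1$, with $r=r_\bbX$. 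This I would prove by induction, or more cleanly via the hockey-stick/Vandermonde identity: $\binom{n+i}{n}+\binom{n+r-1-i}{n}$ counts monomials of degree $\le i$ in $n$ variables plus monomials of degree $\le r-1-i$, and one checks these two sets can be injected into $\{$monomials of degree $\le r-1\} \sqcup \{\text{one extra point}\}$ by sending a degree-$\le i$ monomial $m$ to $m$ and a degree-$\le r-1-i$ monomial $m'$ to $X_n^{r-1-i}\cdot m'$ if that has degree $\le r-1$, handling the single collision (the constant monomial, mapping to $X_n^{r-1-i}$, versus $1$) with the ``$+1$''.

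I expect the main obstacle to be exactly this binomial inequality at the \emph{endpoints} $i=0$ and $i=r_\bbX-1$, where $\binom{n+0}{n}+\binom{n+r_\bbX-1}{n} = 1 + \binom{n+r_\bbX-1}{n}$, which is $\le\deg(\bbX)$ precisely because of genericity and $\HF_\bbX(r_\bbX-1)<\deg(\bbX)$ --- so this borderline case is tight and must be argued carefully rather than crudely bounded. For the interior indices the inequality has slack and follows from the midpoint-concavity of $i\mapsto\binom{n+i}{n}$ together with the endpoint case. Once the pairing inequality is established for every $i$, summing and reindexing completes the proof with no further input. An alternative I would keep in reserve, if the direct binomial bookkeeping gets messy, is to split the sum at $\alpha_\bbX$ and use the two-piece shape of $\HF_\bbX$ separately, but I anticipate the pairing argument above is the cleanest route.
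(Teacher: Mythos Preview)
Your pairing strategy is correct and genuinely different from the paper's argument. The paper proceeds by a case split: if $\deg(\bbX)=\binom{n+\alpha_\bbX-1}{n}$ it computes $\sum_{i=0}^{r_\bbX-1}\HF_\bbX(i)=\binom{n+r_\bbX}{n+1}$ and reduces to the closed-form inequality $r_\bbX\binom{n+r_\bbX}{n}\ge 2\binom{n+r_\bbX}{n+1}$; in the remaining case $r_\bbX=\alpha_\bbX$ it bounds $\deg(\bbX)$ below by $\binom{n+\alpha_\bbX-1}{n}+1$ and then runs an induction on~$\alpha_\bbX$ to establish $\alpha_\bbX(\binom{n+\alpha_\bbX-1}{n}+1)\ge 2\binom{n+\alpha_\bbX}{n+1}$. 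Your approach avoids both the case split and the induction: the pointwise pairing inequality $\HF_\bbX(i)+\HF_\bbX(r_\bbX-1-i)\le\deg(\bbX)$ is in fact slightly stronger than what is needed and falls out uniformly. One caution on terminology: the relevant fact is that $i\mapsto\binom{n+i}{n}$ is \emph{convex} (its second forward difference is $\binom{n+i}{n-2}\ge 0$), so the symmetric sum $\binom{n+i}{n}+\binom{n+r-1-i}{n}$ is \emph{maximized} at the endpoints $i\in\{0,r-1\}$, where it equals $1+\binom{n+r-1}{n}\le\deg(\bbX)$; your phrase ``midpoint-concavity'' points in the wrong direction, and the monomial-injection sketch you offer is not quite right as written (the map $m'\mapsto X_n^{r-1-i}m'$ need not land in degree $\le r-1$), so in the write-up you should lean on the convexity argument rather than the bijection.
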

\begin{proof}
If $n=1$ then $\bbX$ is a complete intersection with 
$r_\bbX = \deg(\bbX)-1$ and 
$\sum_{i=0}^{r_\bbX-1}\HF_\bbX(i) = \frac{(\deg(\bbX)-1)\deg(\bbX)}{2}$,
and so the equality for (\ref{Equation3.4}). 
Also, the inequality (\ref{Equation3.4}) is trivial for 
the case $r_\bbX=0$. Thus we may assume that $n\ge 2$
and $r_\bbX\ge 1$, and consider the following two cases:

\noindent \textit{Case 1:}\quad Suppose 
$\deg(\bbX)= \binom{n+\alpha_{\bbX}-1}{n}$.
Then $r_\bbX=\alpha_\bbX-1$ and 
$$
\sum_{i=0}^{r_\bbX-1}\HF_\bbX(i) = \sum_{i=0}^{r_\bbX-1}\binom{n+i}{n}
= \binom{n+\alpha_\bbX-1}{n+1}.
$$ 
So, the inequality (\ref{Equation3.4}) is equivalent to
$$
r_\bbX\binom{n+r_\bbX}{n} \ge 2\binom{n+r_\bbX}{n+1}.
$$
But the last inequality follows from the fact that 
$$
r_\bbX\binom{n+r_\bbX}{n} -2\binom{n+r_\bbX}{n+1}
= \frac{(n+r_\bbX)!}{n!(r_\bbX-1)!}(1-\frac{2}{n+1})\ge 0.
$$ 

\noindent \textit{Case 2:}\quad Suppose 
$\binom{n+\alpha_{\bbX}-1}{n}< \deg(\bbX)< \binom{n+\alpha_{\bbX}}{n}$.
Then $r_\bbX = \alpha_\bbX$ and 
$$
\sum_{i=0}^{r_\bbX-1}\HF_\bbX(i) = \sum_{i=0}^{r_\bbX-1}\binom{n+i}{n}
= \binom{n+\alpha_\bbX}{n+1}.
$$
For $\alpha_\bbX=1$ we have $r_\bbX\cdot \deg(\bbX)=\deg(\bbX)\ge 2 
= 2\binom{n+\alpha_\bbX}{n+1}$. 
In order to prove the inequality (\ref{Equation3.4}),
it suffices to prove the inequality 
\begin{equation}\label{Equation3.4-2}
\alpha_\bbX(\binom{n+\alpha_{\bbX}-1}{n}+1) \ge 2\binom{n+\alpha_\bbX}{n+1}
\end{equation}
for all $\alpha_\bbX\ge 2$ and $n\ge 2$. 
When $\alpha_\bbX = 2$, we see that 
$\alpha_\bbX(\binom{n+\alpha_{\bbX}-1}{n}+1)
= 2(\binom{n+1}{n}+1) = 2n+4 = 2\binom{n+\alpha_\bbX}{n+1}$.
Now we assume that the inequality (\ref{Equation3.4-2}) holds true
for some $\alpha_\bbX\ge 2$. We need to prove that the inequality
(\ref{Equation3.4-2}) also holds true for $\alpha_\bbX+1$.
Observe that 
$$
\begin{aligned}
&(\alpha_\bbX+1)(\binom{n+\alpha_{\bbX}}{n}+1) - 
2\binom{n+\alpha_\bbX+1}{n+1} \\
&= \alpha_\bbX(\binom{n+\alpha_{\bbX}}{n}+1) + \binom{n+\alpha_{\bbX}}{n}+1 
- 2(\binom{n+\alpha_\bbX}{n+1}+\binom{n+\alpha_\bbX}{n})\\
&\ge \alpha_\bbX\binom{n+\alpha_{\bbX}-1}{n-1} 
-\binom{n+\alpha_\bbX}{n} +1\\
&= \binom{n+\alpha_\bbX}{n}(\frac{n\alpha_\bbX}{n+\alpha_\bbX} -1)+1 \ge 0
\end{aligned} 
$$
since $n\alpha_\bbX\ge n+\alpha_\bbX$ for $n,\alpha_\bbX\ge 2$.
Hence the the inequality (\ref{Equation3.4-2}) holds true
for all $\alpha_\bbX\ge 2$ and $n\ge 2$, as desired.
\end{proof}

\begin{proof}[Proof of Theorem~\ref{thm:Apery-Gorenstein-Samual}]
Suppose that $\bbX$ is arithmetically Gorenstein. Then
$\bbX$ is a CB-scheme and $\HF_\bbX$ is symmetric 
by \cite[Theorem~6.8]{KLR2019}. So, Proposition~\ref{prop:CBP-Conductor}
yields that $\fF_{\widetilde{R}/R} = \bigoplus_{i\geq r_{\bbX}} R_i$.
We have the isomorphism
$\widetilde{\imath}|_{R_{r_\bbX+i}}: R_{r_\bbX+i}
\rightarrow \widetilde{R}_{r_\bbX+i}$ of $K$-vector spaces
for every $i\ge 0$.
Since $\HF_\bbX(i)=\deg(\bbX)-\HF_\bbX(r_\bbX-1 -i)$ for all $i\in\bbZ$, 
this implies  
$$
\ell(\widetilde{R}/R) = \sum_{i=0}^{r_\bbX-1}(\deg(\bbX)-\HF_\bbX(i))
= \sum_{i=0}^{r_\bbX-1}\HF_\bbX(i)= \ell(R/\mathfrak{F}_{\widetilde{R}/R}).
$$
Conversely, suppose that
$\ell(\widetilde{R}/R) = \ell(R/\mathfrak{F}_{\widetilde{R}/R})$.
We write $\mathfrak{F}_{\widetilde{R}/R}=\mathfrak{F}_1\times \cdots \times
\mathfrak{F}_s$ as a direct product of ideals $\mathfrak{F}_j$ in
$\mathcal{O}_{\bbX,p_j}[x_0]$.
Using the notation $\mu(a) =\min\{ i\in\bbN \mid (0,...,0,a,0,...,0)x_0^i\in\widetilde{\imath}(R)\}$ and 
$\nu(a)=\max\{ \mu(ab) \mid 0\ne b\in \mathcal{O}_{\bbX,p_j}\}$ 
for $a\in \mathcal{O}_{\bbX,p_j}$, \cite[Proposition~5.2]{KLL2019} yields that 
$$
\mathfrak{F}_j = \langle  ax_0^{\nu(a)} \mid a \in \mathcal{O}_{\bbX,p_j}
\setminus\{0\} \rangle.
$$
Note that for a non-zero element $a\in \mathcal{O}_{\bbX,p_j}$ 
there is $b\in \mathcal{O}_{\bbX,p_j}$ such that $ab$ is a socle 
element of $\mathcal{O}_{\bbX,p_j}$, and so 
$\deg_\bbX(p_j)\le \nu(a)\le r_\bbX$.
In particular, there is a socle element $s_j\in\mathcal{O}_{\bbX,p_j}$
such that $\nu(s_j)=\deg_\bbX(p_j)$ by the definition of the degree 
of a point in~$\bbX$. Let $m_j = \dim_K(\mathcal{O}_{\bbX,p_j})$
for $j=1,...,s$. We see that 
$$
m_j\deg_\bbX(p_j) \le \dim_K(\mathcal{O}_{\bbX,p_j}[x_0]/\mathfrak{F}_j) 
\le m_j r_\bbX
$$
and the equalities only occur if $\deg_\bbX(p_j)=r_\bbX$.
It follows that
$$
\begin{aligned}
\sum_{j=1}^s m_j\deg_\bbX(p_j) 
\le \ell(\widetilde{R}/\mathfrak{F}_{\widetilde{R}/R})
= \sum_{j=1}^s \dim_K(\mathcal{O}_{\bbX,p_j}[x_0]/\mathfrak{F}_j)
\le \sum_{j=1}^s m_jr_\bbX
\end{aligned}
$$
and with equalities if and only if $\deg_\bbX(p_j)=r_\bbX$ for all $j=1,...,s$.
Moreover, we have $\deg(\bbX)=\sum_{j=1}^s m_j$ and
$$
\ell(\widetilde{R}/R) = \sum_{i=0}^{r_\bbX-1} 
(\deg(\bbX)-\HF_\bbX(i)) = r_\bbX\deg(\bbX) - 
\sum_{i=0}^{r_\bbX-1}\HF_\bbX(i).
$$
From the exact sequence 
$$
0\longrightarrow R/\mathfrak{F}_{\widetilde{R}/R}
\stackrel{\widetilde{\imath}}{\longrightarrow} 
\widetilde{R}/\mathfrak{F}_{\widetilde{R}/R} 
\longrightarrow \widetilde{R}/R 
\longrightarrow 0
$$
we also have 
$$
\ell(R/\mathfrak{F}_{\widetilde{R}/R}) = 
\ell(\widetilde{R}/\mathfrak{F}_{\widetilde{R}/R})-\ell(\widetilde{R}/R).
$$
So, the equality 
$\ell(\widetilde{R}/R) = \ell(R/\mathfrak{F}_{\widetilde{R}/R})$
implies 
$$
\begin{aligned}
r_\bbX\deg(\bbX) - 
\sum_{i=0}^{r_\bbX-1}\HF_\bbX(i)
&= \sum_{j=1}^s \dim_K(\mathcal{O}_{\bbX,p_j}[x_0]/\mathfrak{F}_j)
- r_\bbX\deg(\bbX) + 
\sum_{i=0}^{r_\bbX-1}\HF_\bbX(i) \\
&\le  \sum_{i=0}^{r_\bbX-1}\HF_\bbX(i).
\end{aligned}
$$
On the other hand, by Lemma~\ref{lemSec3.3}, we have 
$r_\bbX\deg(\bbX) \ge 2\sum_{i=0}^{r_\bbX-1}\HF_\bbX(i)$.
Thus we must have $r_\bbX\deg(\bbX) = 2\sum_{i=0}^{r_\bbX-1}\HF_\bbX(i)$.
This means  
$\sum_{j=1}^s \dim_K(\mathcal{O}_{\bbX,p_j}[x_0]/\mathfrak{F}_j)
= r_\bbX\deg(\bbX)$, i.e., $\deg_\bbX(p_j)=r_\bbX$ for all
$j=1,...,s$. Hence $\bbX$ is a CB-scheme.
Let $\omega_R=\Hom_{K[x_0]}(R,K[x_0])(-1)$ be the canonical module of $R$.
By \cite[Proposition~4.3]{KLLT2020}, we have 
$\Ann_R((\omega_R)_{-r_\bbX+1}) = \ideal{0}$. 
Because $\bbX$ is locally Gorenstein, there is for each $j\in\{1,...,s\}$
a uniquely maximal $p_j$-subscheme $\bbY_j$ of $\bbX$. So, 
$\Ann_R((\omega_R)_{-r_\bbX+1}) = \ideal{0}$ is equivalent to the
condition that for each $j\in\{1,...,s\}$ there is 
$\varphi_j\in(\omega_R)_{-r_\bbX+1}$ such that 
$I_{\bbY_j/\bbX}\cdot \varphi_j \ne \ideal{0}$.
Since $I_{\bbY_j/\bbX}$ can be generated by a set of minimal separators 
of $\bbY_j$ in $\bbX$ by Lemma~\ref{lem:Sep-MaxSubsch}, 
we can choose a separator 
$f_j\in (I_{\bbY_j/\bbX})_{r_\bbX}$ and $g_j\in R_{k_j}$ such that 
$f_j\cdot\varphi_j (g_j) = \varphi_j(f_jg_j)=a_{jj}x_0^{k_j}$ 
with $a_{jj}\in K\setminus\{0\}$. 
We want to find $\varphi = c_1\varphi_1+\cdots+c_s\varphi_s$ with 
$c_1,...,c_s\in K$ such that $f_j\cdot \varphi \ne 0$ for $j=1,...,s$,
and hence $\Ann_R(\varphi)=\ideal{0}$ by \cite[Lemma~4.2]{KLLT2020}.
Letting $\varphi_i(f_jg_j)=a_{ij}x_0^{k_j}\in K[x_0]$ with
$a_{ij}\in K$, we have 
$$
f_j\cdot\varphi(g_j)= c_1\varphi_1(f_jg_j)+\cdots+c_s\varphi_s(f_jg_j)
= (c_1a_{1j}+\cdots+c_sa_{sj})x_0^{k_j} \ne 0
$$
if and only if $c_1a_{1j}+\cdots+c_sa_{sj}\ne 0$. 
Consider the non-constant polynomial 
$h = \prod_{i=1}^s(z_1a_{1j}+\cdots+z_sa_{sj})$ 
in indeterminates $z_1,...,z_s$ (as $a_{jj}\ne 0$). Since 
$\#K>\deg(\bbX)\ge s$, we can argue as in \cite[Proposition~5.5.21]{KR2005} 
to find a tuple $(c_1,...,c_s)\in K^s$ that is not a zero of $h$.
For such tuple $(c_1,...,c_s)\in K^s$, the element 
$\varphi\in (\omega_R)_{-r_\bbX+1}$ satisfies
$\Ann_R(\varphi)=\ideal{0}$.

Consequently, the homogeneous $R$-linear map 
$R(r_\bbX-1)\rightarrow \omega_R,1\mapsto \varphi,$ 
is an injection, and hence we get
$$
\HF_\bbX(i) \le \HF_{\omega_R}(-r_\bbX+1+i)=\deg(\bbX)-\HF_\bbX(r_\bbX-1-i)
$$
for all $i\in\bbZ$. Based on the equality $\sum_{i=0}^{r_\bbX-1} 
(\deg(\bbX)-\HF_\bbX(i)) = \sum_{i=0}^{r_\bbX-1}\HF_\bbX(i)$,
the Hilbert function $\HF_\bbX$ is symmetric. 
An application of \cite[Theorem~6.8]{KLR2019} yields that 
$\bbX$ is arithmetically Gorenstein.   
\end{proof}

\begin{remark}
In general, if $\bbX$ is arithmetically Gorenstein 
(may not in generic position) then we always have
$\ell(\widetilde{R}/R) = \ell(R/\mathfrak{F}_{\widetilde{R}/R})$.
\end{remark}

\begin{example}
Let $\bbX\subseteq \bbP^3_K$ be the 0-dimensional scheme 
consisting of 3 $K$-rational points $p_1=(1:0:0:0)$, $p_2=(1:1:0:0)$,
$p_3=(1:0:1:0)$ and a non-reduced point $p_4$ corresponding to
$I_{p_4} = \langle X_1-X_0, X_2-X_0, (X_3-X_0)^2 \rangle 
\subseteq K[x_0,...,X_3]$. Then $\deg(\bbX)=5$ and 
$\HF_\bbX: 1\ 4\ 5\ 5\cdots$, and so $\alpha_\bbX = r_\bbX=2$.
Thus $\bbX$ is in generic position with 
$r_\bbX\deg(\bbX) = 2\sum_{i=0}^{r_\bbX-1}\HF_\bbX(i)$. 
Also, we have $\deg_\bbX(p_i)=2$ for $i=1,...,4$, and hence 
$\ell(R/\mathfrak{F}_{\widetilde{R}/R}) = 
\ell(\widetilde{R}/\mathfrak{F}_{\widetilde{R}/R}) -\ell(\widetilde{R}/R)
= 2(1+1+1+2) - 5 = 5 =\ell(\widetilde{R}/R)$.
Therefore $\bbX$ is an arithmetically Gorenstein scheme.

Next, consider the 0-dimensional scheme $\mathbb{Y} \subseteq \bbP^3_K$
of degree 5 with the same support and $K$-rational points as $\bbX$, 
but the non-reduced point $p_4$ of~$\mathbb{Y}$ corresponds to
$I'_{p_4} = \langle (X_1-X_0)^2, X_2-X_0, X_3-X_0 \rangle$. 
Then $\HF_\bbY: 1\ 4\ 5\ 5\cdots$ and $r_\bbY=\alpha_\bbY=2$,
and so $\bbY$ is in generic position. However, we see that
$X_2-X_3$ is the minimal separator of $\bbY\setminus\{p_3\}$ in $\bbY$, 
and hence $\deg_\bbY(p_3)=1$, while $\deg_\bbY(p_1)=\deg_\bbY(p_2)=2$.
Moreover, $\bbY$ has a uniquely maximal $p_4$-subscheme 
whose minimal separator is of degree 2, and hence $\deg_\bbY(p_4)=2$. 
This shows that 
$\ell(\widetilde{R}_\bbY/\mathfrak{F}_{\widetilde{R}_\bbY/R_\bbY}) 
= 2+2+1+4=9$ and $\ell(\widetilde{R}_\bbY/R_\bbY)=5 \ne 4 
=\ell(R_\bbY/\mathfrak{F}_{\widetilde{R}_\bbY/R_\bbY})$.
Therefore the scheme $\bbY$ is not arithmetically Gorenstein. 
In this case we also have 
$r_\bbY\deg(\bbY) = 2\sum_{i=0}^{r_\bbY-1}\HF_\bbY(i)$.
\end{example}

\begin{corollary}\label{corSec3.5-1}
Let $\bbX$ be a $0$-dimensional locally Gorenstein scheme in~$\bbP^n_K$.
Then $\bbX$ is arithmetically Gorenstein if and only if 
it is a CB-scheme and 
$\ell(\widetilde{R}/R) = \ell(R/\mathfrak{F}_{\widetilde{R}/R})$.
\end{corollary}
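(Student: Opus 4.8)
The plan is to reduce to Theorem~\ref{thm:Apery-Gorenstein-Samual} by observing that the only extra hypothesis there — that $\bbX$ is in generic position — can be dropped once we add ``$\bbX$ is a CB-scheme'' to both sides. The forward direction ($\Rightarrow$) is already essentially contained in the Remark immediately following the theorem: if $\bbX$ is arithmetically Gorenstein, then $\ell(\widetilde R/R)=\ell(R/\fF_{\widetilde R/R})$ always holds, and moreover $\bbX$ is a locally Gorenstein CB-scheme by the discussion preceding the conductor colength paragraph (citing \cite{KL2017} and \cite[Theorem~6.8]{KLR2019}). So I would dispatch ($\Rightarrow$) in one or two sentences.

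For the converse, suppose $\bbX$ is a locally Gorenstein CB-scheme with $\ell(\widetilde R/R)=\ell(R/\fF_{\widetilde R/R})$. The key step is to revisit the ``conversely'' half of the proof of Theorem~\ref{thm:Apery-Gorenstein-Samual} and isolate exactly where the generic-position hypothesis was used. Looking at that argument: generic position entered only through Lemma~\ref{lemSec3.3}, i.e. through the inequality $r_\bbX\deg(\bbX)\ge 2\sum_{i=0}^{r_\bbX-1}\HF_\bbX(i)$, which was then combined with the identity $\ell(R/\fF_{\widetilde R/R})=\ell(\widetilde R/\fF_{\widetilde R/R})-\ell(\widetilde R/R)$ to force $\deg_\bbX(p_j)=r_\bbX$ for all $j$ — that is, to force $\bbX$ to be a CB-scheme. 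But here we are \emph{assuming} $\bbX$ is a CB-scheme from the outset, so this deduction is unnecessary. Thus I would argue: since $\bbX$ is a CB-scheme, Proposition~\ref{prop:CBP-Conductor} gives $\fF_{\widetilde R/R}=\bigoplus_{i\ge r_\bbX}R_i$, whence $\ell(R/\fF_{\widetilde R/R})=\sum_{i=0}^{r_\bbX-1}\HF_\bbX(i)$; combined with the isomorphisms $\widetilde\imath|_{R_i}\colon R_i\xrightarrow{\sim}\widetilde R_i$ for $i\ge r_\bbX$ this yields $\ell(\widetilde R/R)=\sum_{i=0}^{r_\bbX-1}(\deg(\bbX)-\HF_\bbX(i))$. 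The hypothesis $\ell(\widetilde R/R)=\ell(R/\fF_{\widetilde R/R})$ then reads
$$
\sum_{i=0}^{r_\bbX-1}\bigl(\deg(\bbX)-\HF_\bbX(i)\bigr)=\sum_{i=0}^{r_\bbX-1}\HF_\bbX(i),
$$
which is precisely the symmetry-in-sum condition $\ell(\widetilde R/R)=\ell(R/\fF_{\widetilde R/R})$ used at the end of the proof of Theorem~\ref{thm:Apery-Gorenstein-Samual}.

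From this point I would run verbatim the final portion of that proof, which does \emph{not} use generic position at all: using $\Ann_R((\omega_R)_{-r_\bbX+1})=\ideal 0$ from \cite[Proposition~4.3]{KLLT2020} (valid since $\bbX$ is a CB-scheme), the local Gorenstein hypothesis to get a unique maximal $p_j$-subscheme for each $j$, Lemma~\ref{lem:Sep-MaxSubsch}, and the averaging argument over $K^s$ (using $\#K>\deg(\bbX)\ge s$, as in \cite[Proposition~5.5.21]{KR2005}) to produce $\varphi\in(\omega_R)_{-r_\bbX+1}$ with $\Ann_R(\varphi)=\ideal 0$. This gives an injection $R(r_\bbX-1)\hookrightarrow\omega_R$, hence $\HF_\bbX(i)\le\deg(\bbX)-\HF_\bbX(r_\bbX-1-i)$ for all $i$; combined with the displayed sum equality this forces $\HF_\bbX$ to be symmetric, and \cite[Theorem~6.8]{KLR2019} then gives that $\bbX$ is arithmetically Gorenstein.

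The main obstacle — really a bookkeeping point rather than a genuine difficulty — is checking that the later steps of the proof of Theorem~\ref{thm:Apery-Gorenstein-Samual} are genuinely independent of generic position, in particular that \cite[Proposition~4.3]{KLLT2020} applies to any CB-scheme (not only one in generic position) and that the polynomial $h=\prod_{i}(z_1a_{1i}+\cdots+z_sa_{si})$ is still non-constant here, i.e. that each $a_{ii}\ne 0$, which follows from the choice of $f_i,g_i$ exactly as before. Once these are confirmed, the corollary follows; alternatively one could phrase the whole thing as: the argument of Theorem~\ref{thm:Apery-Gorenstein-Samual} shows that, for a locally Gorenstein scheme, $\ell(\widetilde R/R)=\ell(R/\fF_{\widetilde R/R})$ together with the CB-property implies arithmetically Gorenstein, and conversely — and generic position was used in that theorem solely to \emph{derive} the CB-property, which we now assume outright.
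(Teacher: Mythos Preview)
Your proposal is correct, but it takes a considerably longer route than the paper's own proof. The paper dispatches the corollary in a few lines: assuming $\bbX$ is a CB-scheme, it simply \emph{states} the inequality $\HF_\bbX(i)\le \deg(\bbX)-\HF_\bbX(r_\bbX-1-i)$ for all $i$ as a known property (this is standard for locally Gorenstein CB-schemes, and is in any case precisely what the $\omega_R$-injection argument in Theorem~\ref{thm:Apery-Gorenstein-Samual} establishes), then uses Proposition~\ref{prop:CBP-Conductor} to compute $\ell(R/\fF_{\widetilde R/R})=\sum_{i<r_\bbX}\HF_\bbX(i)$ and $\ell(\widetilde R/R)=\sum_{i<r_\bbX}(\deg(\bbX)-\HF_\bbX(i))$. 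Since the termwise inequality holds, the equality of the two sums is equivalent to termwise equality, i.e.\ to symmetry of $\HF_\bbX$; and for a locally Gorenstein CB-scheme this is equivalent to arithmetically Gorenstein by \cite[Theorem~6.8]{KLR2019}. The forward direction is subsumed in this equivalence, since arithmetically Gorenstein implies CB.

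Your approach reproduces the entire second half of the proof of Theorem~\ref{thm:Apery-Gorenstein-Samual} --- the construction of $\varphi\in(\omega_R)_{-r_\bbX+1}$ with trivial annihilator via \cite[Proposition~4.3]{KLLT2020} and the averaging argument --- solely in order to re-derive the inequality $\HF_\bbX(i)\le \deg(\bbX)-\HF_\bbX(r_\bbX-1-i)$. Your analysis of \emph{where} generic position is used in that proof is accurate (only via Lemma~\ref{lemSec3.3}, to deduce the CB-property), so the argument you outline does go through. But once one recognizes that the endgame only needs (i) the CB-inequality and (ii) the sum identity coming from the length hypothesis, all that machinery becomes redundant: the inequality can be quoted rather than rebuilt.
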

\begin{proof}
If $\bbX$ is a CB-scheme, then 
$\HF_\bbX(i)\le \deg(\bbX)-\HF_\bbX(r_\bbX-1-i)$ for all $i\in\bbZ$
and $\mathfrak{F}_{\widetilde{R}/R}=\oplus_{i\ge r_\bbX}R_i$, and so 
$\ell(R/\mathfrak{F}_{\widetilde{R}/R})=\sum_{i=0}^{r_\bbX-1}\HF_\bbX(i)$
and $\ell(\widetilde{R}/R) = \sum_{i=0}^{r_\bbX-1}(\deg(\bbX)-\HF_\bbX(i))$.
Thus $\ell(R/\mathfrak{F}_{\widetilde{R}/R})=\ell(\widetilde{R}/R)$ is equivalent to the condition that $\HF_\bbX$ is symmetric, 
in turn equivalent to that $\bbX$ is arithmetically Gorenstein 
by \cite[Theorem~6.8]{KLR2019}.
\end{proof}

\medskip\bigbreak
\section{K\"ahler Differents of Complete Intersections}
\label{Sec4}

As in the previous sections, we let $\bbX \subseteq \bbP^n_K$
be a $0$-dimensional scheme over a field $K$ of characteristic zero
or ${\rm char}(K)>\deg(\bbX)$, and let $\Supp(\bbX)=\{p_1,...,p_s\}$. 
The aim of this section is to examine the relation between
the K\"ahler different and two other differents, namely the Noether 
and Dedekind differents, and use them to characterize 
arithmetically Gorenstein schemes and complete intersections.

First we describe the K\"ahler different $\vartheta_\bbX$
when $\bbX$ is a complete intersection.
For $n=1$, $\bbX$ is a complete intersection and $\vartheta_\bbX$  
is a principal ideal whose Hilbert function is of form
$$
\HF_{\vartheta_\bbX}:\ 0 \ \cdots \ 0 \ \underset{\tiny{[d-1]}}{1}
   \ 2 \ 3 \cdots \ t-1 \ \underset{\tiny{[d+t-2]}}{t} \ t \cdots.
$$
and $\ri(\vartheta_\bbX) = d+t-2$, where 
$t= \sum_{j=1}^s\dim_KK(p_j)$ and $d=\deg(\bbX)$.
The straightforward, but tedious proof of this property 
is left to the interested reader.

In the following we consider the case $n\ge 2$. When $\bbX$ is a 
reduced complete intersection, \cite[Proposition~2.6]{KL2017}
provides us a concrete description of the Hilbert function 
of $\vartheta_\bbX$. More generally, we supplement the following lemma.

\begin{lemma}
Let $\bbX$ be a 0-dimensional complete intersection in $\bbP^n_K$,
and let $I_\bbX=\ideal{F_1,...,F_n}$, where $F_j\in P$
is a homogeneous polynomial of degree $d_j$ for $j=1,...,n$.
\begin{enumerate}
\item[(a)] The K\"ahler different of $\bbX$ is given by $\vartheta_\bbX = 
\ideal{\frac{\partial(F_1,...,F_n)}{\partial(x_1,...,x_n)}}$.
\item[(b)] The Hilbert polynomial of $\vartheta_\bbX$ is 
$\HP_{\vartheta_\bbX}= \sum_{j=1}^s\dim_KK(p_j)$ and its regularity 
index satisfies 
$r_\bbX=\sum_{i=1}^nd_i-n \le \ri(\vartheta_\bbX)\le 2r_\bbX$. 
In particular, the lower bound for $\ri(\vartheta_\bbX)$ is attained 
if the support of $\bbX$ contains only one $K$-rational point, 
while the upper bound  is attained when $\bbX$ is reduced.
\end{enumerate}
\end{lemma}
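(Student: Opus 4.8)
The plan is to prove (a) directly from Remark~\ref{remark:3differents}, and then to deduce (b) from Proposition~\ref{prop_HF-Kdiff}, Proposition~\ref{prop:cipoint-ci}, and the isomorphism $\widetilde{\imath}|_{R_i}:R_i\to(S[x_0])_i=Sx_0^i$ which is available for $i\ge r_\bbX$. For part (a): since $\bbX$ is a complete intersection, $\{F_1,\dots,F_n\}$ is a homogeneous system of generators of $I_\bbX$, so by Remark~\ref{remark:3differents} the ideal $\vartheta_\bbX$ is generated by the $n$-minors of the Jacobian matrix $(\partial F_j/\partial x_i)_{1\le i,j\le n}$ with respect to $x_1,\dots,x_n$, and it does not depend on the chosen generators. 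This matrix is square of size $n$, hence has a single $n$-minor, namely $\Theta:=\partial(F_1,\dots,F_n)/\partial(x_1,\dots,x_n)$, and therefore $\vartheta_\bbX=\ideal{\Theta}$.

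For part (b), first note that a complete intersection is a locally complete intersection, so $\bbX_{\rm ci}=\Supp(\bbX)$ and Proposition~\ref{prop_HF-Kdiff} immediately gives $\HP_{\vartheta_\bbX}=\sum_{j=1}^s\dim_KK(p_j)$. The equality $r_\bbX=\sum_{i=1}^n d_i-n$ follows by passing to the Artinian reduction: since $x_0$ is a non-zerodivisor on $R$ one has $\HF_\bbX(j)=\sum_{k\le j}\HF_{\overline{R}}(k)$, and $\overline{R}=R/\ideal{x_0}$ is an Artinian complete intersection in $n$ variables of type $(d_1,\dots,d_n)$ whose Hilbert series $\prod_{i=1}^n(1+t+\cdots+t^{d_i-1})$ is a polynomial of degree $\sum d_i-n$ with nonzero leading coefficient, so $\HF_\bbX$ stabilizes at $\deg(\bbX)$ precisely in degree $\sum d_i-n$. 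Since each $\partial F_j/\partial x_i$ is homogeneous of degree $d_j-1$, the generator $\Theta$ is homogeneous of degree $\sum d_i-n=r_\bbX$, and it is nonzero in $R$ because $\vartheta_\bbX(\overline{R}/K)\ne0$ by Proposition~\ref{prop:cipoint-ci}(b). Consequently $(\vartheta_\bbX)_i=\Theta\,R_{i-r_\bbX}$ vanishes for $i<r_\bbX$ and is one-dimensional for $i=r_\bbX$.

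It remains to bound $\ri(\vartheta_\bbX)$ and to settle the two "in particular" claims. Multiplication by $x_0$ embeds $(\vartheta_\bbX)_i$ into $(\vartheta_\bbX)_{i+1}$, so $\HF_{\vartheta_\bbX}$ is non-decreasing; as it equals $0$ in degree $r_\bbX-1$ while $\HP_{\vartheta_\bbX}=\sum\dim_KK(p_j)\ge1$, this yields $\ri(\vartheta_\bbX)\ge r_\bbX$. For the upper bound, for $i\ge r_\bbX$ the isomorphism $\widetilde{\imath}|_{R_i}:R_i\to Sx_0^i$ gives $\widetilde{\imath}((\vartheta_\bbX)_{r_\bbX+i})=\widetilde{\imath}(\Theta)\cdot Sx_0^i=(\Theta^{\rm deh}S)\,x_0^{r_\bbX+i}$; since $\vartheta_\bbX=\ideal{\Theta}$, Remark~\ref{remark:3differents} forces $\vartheta_\bbX(\calO_{\bbX,p_j}/K)=\ideal{\Theta^{\rm deh}\bmod\fq_j}$, which by Proposition~\ref{prop:cipoint-ci}(a) is the socle of $\calO_{\bbX,p_j}$, of $K$-dimension $\dim_KK(p_j)$, so by injectivity of $\widetilde{\imath}$ we get $\HF_{\vartheta_\bbX}(r_\bbX+i)=\sum_{j=1}^s\dim_KK(p_j)=\HP_{\vartheta_\bbX}$ for all $i\ge r_\bbX$, that is $\ri(\vartheta_\bbX)\le2r_\bbX$. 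If $\Supp(\bbX)=\{p_1\}$ with $p_1$ $K$-rational, then $\HP_{\vartheta_\bbX}=1=\HF_{\vartheta_\bbX}(r_\bbX)$, so monotonicity forces $\ri(\vartheta_\bbX)=r_\bbX$; and if $\bbX$ is reduced, Corollary~\ref{cor:LocallyCI}(c) supplies the reverse inequality $2r_\bbX\le\ri(\vartheta_\bbX)$, whence $\ri(\vartheta_\bbX)=2r_\bbX$. I expect the only step requiring genuine care to be the identity $\widetilde{\imath}((\vartheta_\bbX)_{r_\bbX+i})=(\Theta^{\rm deh}S)x_0^{r_\bbX+i}$ and the identification of $\Theta^{\rm deh}\calO_{\bbX,p_j}$ with the socle of the local ring via Proposition~\ref{prop:cipoint-ci}; note also that $\widetilde{\imath}|_{R_i}$ is an isomorphism only for $i\ge r_\bbX$, which is exactly why the general upper bound comes out as $2r_\bbX$ rather than $r_\bbX$.
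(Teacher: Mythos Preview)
Your proof is correct and follows essentially the same route as the paper's: part~(a) from the definition of $\vartheta_\bbX$, the Hilbert polynomial from Proposition~\ref{prop_HF-Kdiff}, and the upper bound $\ri(\vartheta_\bbX)\le 2r_\bbX$ from the fact that $\widetilde{\imath}$ carries $\Theta\cdot R_{r_\bbX}$ onto $\vartheta_\bbX(S/K)\,x_0^{2r_\bbX}$. The paper's proof is a one-sentence sketch, and you have simply unpacked each step (the Artinian reduction for $r_\bbX=\sum d_i-n$, monotonicity for the lower bound, the socle identification via Proposition~\ref{prop:cipoint-ci}, and Corollary~\ref{cor:LocallyCI}(c) for the reduced case); none of this deviates from the intended argument.
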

\begin{proof}
This follows from the definition of $\vartheta_\bbX$ and
by Proposition~\ref{prop_HF-Kdiff},
especially, the upper bound for $\ri(\vartheta_\bbX)$ follows from
the fact that the injection $\widetilde{\imath}: R\rightarrow S[x_0]$
given by (\ref{Equa:ImathMap}) sends 
$\frac{\partial(F_1,...,F_n)}{\partial(x_1,...,x_n)}\cdot R_{r_\bbX}$ 
onto $\vartheta_\bbX(S/K)x_0^{2r_\bbX}$.
\end{proof}

The following example shows that the case 
$r_\bbX < \ri(\vartheta_\bbX) < 2r_\bbX$ can occur
for a 0-dimensional complete intersection $\bbX\subseteq \bbP^n_K$.

\begin{example}
Let $\bbX$ be the non-reduced 0-dimensional complete intersection 
of two cubic curves in $\bbP^2_K$ defined by $I_\bbX=\ideal{F,G}$,
where $F = X_0 X_1^2  +X_1^3$ and 
$G = X_0^2 X_2 -2 X_0X_2^2  +X_2^3$.
Then $\deg(\bbX)=9$ and the support of $\bbX$ contains 
four $K$-rational points. We have $\HF_\bbX: \ 1\ 3\ 6\ 8\ 9\ 9\cdots$ 
and $r_\bbX=4$. Furthermore, the K\"ahler different of $\bbX$ is 
$\vartheta_\bbX=\ideal{ 2 x_0^3 x_{1} +3 x_{1}^4  +12 x_{1}^3 x_{2} 
-10 x_0 x_{1} x_{2}^2  +9 x_{1}^2 x_{2}^2  +8 x_{1} x_{2}^3}$
and $\HF_{\vartheta_\bbX}:\ 0\ 0\ 0\ 0\ 1\ 3\ 4\ 4\cdots$.
In particular, $r_\bbX=4< \ri(\vartheta_\bbX)=6 <8=2r_\bbX$.
\end{example}

Now we introduce the Noether different of a 0-dimensional scheme 
$\bbX\subseteq \bbP^n_K$. In short, we write $R^e=R\otimes_{K[x_0]}R$.
Let $J$ be the kernel of the canonical multiplication map
$\mu: R^e\rightarrow R, f\otimes g\mapsto fg$.
Then $\Ann_{R^e}(J)$ is a homogeneous ideal of $R^e$.

\begin{definition}
The homogeneous ideal $\vartheta_N := \mu(\Ann_{R^e}(J))$ of $R$
is called the \textit{Noether different} of $\bbX$ (or of $R/K[x_0]$).
\end{definition}

More generally, for any algebra $B/A$ we can define in the same way
the Noether different 
$\vartheta_N(B/A)=\mu(\Ann_{B^e}(\widetilde{J}))$, 
where $\widetilde{J}=\Ker(B^e\stackrel{\mu}{\rightarrow} B)$. 
Similarly to the K\"ahler differents, 
the Noether differents of algebras $S/K$ and $\overline{R}/K$, 
where $S=R/\ideal{x_0-1}$ and $\overline{R}=R/\ideal{x_0}$, satisfy 
$$
\vartheta_N(S/K) = \vartheta_N/\ideal{x_0-1} 
= \prod_{j=1}^s \vartheta_N(\calO_{\bbX,p_j}/K),\quad 
\vartheta_N(\overline{R}/K) = \vartheta_N/\ideal{x_0}.
$$ 
According to \cite[Propositions 10.17-18]{Kun1986}, the relation between 
the K\"ahler and Noether differents is given by 
$\vartheta_N^n \subseteq \vartheta_\bbX\subseteq \vartheta_N$
and two differents agree when $\bbX$ is a complete intersection.

\begin{lemma}\label{lem:LocallyGor-NDiff}
\begin{enumerate}
\item[(a)] The local ring $\calO_{\bbX,p_j}$ is a local
Gorenstein ring if and only if $\vartheta_N(\calO_{\bbX,p_j}/K)\ne 0$.
In this case $\vartheta_N(\calO_{\bbX,p_j}/K)
=\Ann_{\calO_{\bbX,p_j}}(\fm_{\bbX,p_j})$.
\item[(b)] The scheme $\bbX$ is arithmetically Gorenstein if and only if 
$\vartheta_N(\overline{R}/K)\ne 0$.
\end{enumerate}
\end{lemma}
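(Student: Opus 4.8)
The plan is to reduce both parts to the known structure theory of the Noether different for finite algebras over a field, combined with the product decomposition $\vartheta_N(S/K) = \prod_{j=1}^s \vartheta_N(\calO_{\bbX,p_j}/K)$ recorded just before the statement, and with the analogous facts for the Kähler different in Proposition~\ref{prop:cipoint-ci}. First I would treat part~(a). The key fact is that for a finite-dimensional local $K$-algebra $(\calO_{\bbX,p_j},\fm_{\bbX,p_j})$ with residue field a finite separable extension of $K$ (which holds here by the characteristic assumption, since every reduced point is smooth and more generally $\operatorname{char}(K) = 0$ or $> \deg(\bbX)$), the Noether different $\vartheta_N(\calO_{\bbX,p_j}/K)$ is the annihilator of the trace form, equivalently the image under $\mu$ of the annihilator of the diagonal ideal. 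Using the presentation $\calO_{\bbX,p_j} = S/\fq_j = K[X_1,\dots,X_n]/\fQ_j$ one computes $\Ann_{B^e}(\widetilde J)$ explicitly and finds that $\mu$ of it is a principal-type ideal whose nonvanishing detects exactly the Gorenstein property; this is precisely the content of Kunz \cite[Appendix~F / §10]{Kun1986}, so I would cite that for the computation rather than redo it. The point is: $\calO_{\bbX,p_j}$ is Gorenstein $\iff$ its canonical module is free of rank one $\iff$ the socle $\Ann_{\calO_{\bbX,p_j}}(\fm_{\bbX,p_j})$ is one-dimensional over $K(p_j)$ $\iff$ $\vartheta_N(\calO_{\bbX,p_j}/K) \neq 0$, and in the Gorenstein case $\vartheta_N(\calO_{\bbX,p_j}/K)$ equals that socle.

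The cleanest route for the nonvanishing equivalence is to invoke the inclusions $\vartheta_N^n \subseteq \vartheta_\bbX \subseteq \vartheta_N$ localized at $p_j$: dehomogenizing and passing to the factor $\calO_{\bbX,p_j}$ via Remark~\ref{remark:3differents} and the analogous rule for the Noether different, we get $\vartheta_N(\calO_{\bbX,p_j}/K)^n \subseteq \vartheta_\bbX(\calO_{\bbX,p_j}/K) \subseteq \vartheta_N(\calO_{\bbX,p_j}/K)$. Hence $\vartheta_N(\calO_{\bbX,p_j}/K) \neq 0$ if $\vartheta_\bbX(\calO_{\bbX,p_j}/K) \neq 0$, i.e.\ if $\bbX$ is a complete intersection at $p_j$ (Proposition~\ref{prop:cipoint-ci}), which in particular gives the "if" direction once one knows complete intersection points are Gorenstein. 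For the general Gorenstein (not necessarily c.i.) case and the "only if" direction I would use the trace-form description: $\calO_{\bbX,p_j}$ Gorenstein means the $K$-bilinear trace pairing (or rather the pairing into $K(p_j)$) is perfect up to the socle, and Noether's classical computation shows $\vartheta_N(\calO_{\bbX,p_j}/K)$ is generated by the "different element" coming from any $K$-basis, which is a unit multiple of a socle generator precisely when the algebra is Gorenstein. This identification $\vartheta_N(\calO_{\bbX,p_j}/K) = \Ann_{\calO_{\bbX,p_j}}(\fm_{\bbX,p_j})$ in the Gorenstein case then falls out of the same computation.

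For part~(b) I would argue: by Remark~\ref{remark:3differents}-style rules, $\vartheta_N(\overline R/K) = \vartheta_N/\langle x_0\rangle$, and since $x_0$ is a non-zerodivisor of $R$ and $\vartheta_N$ is a homogeneous ideal, $\vartheta_N(\overline R/K) \neq 0$ if and only if $\vartheta_N \neq \langle 0\rangle$ if and only if $\HP_{\vartheta_N} \neq 0$ — here one uses that $\vartheta_N$, being an ideal of the one-dimensional Cohen–Macaulay ring $R$ containing a power of a height-one ideal or not, is either zero or has positive Hilbert polynomial; more directly, $\vartheta_N \neq 0$ forces $(\vartheta_N)_i \neq 0$ for all $i \gg 0$ because multiplication by $x_0$ is injective, and then $x_0$ being a non-zerodivisor on $R/\langle 0\rangle$-part gives $\vartheta_N/\langle x_0\rangle \neq 0$. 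Now $\vartheta_N \neq 0 \iff \vartheta_N(S/K) = \vartheta_N/\langle x_0 - 1\rangle \neq 0 \iff \prod_{j=1}^s \vartheta_N(\calO_{\bbX,p_j}/K) \neq 0 \iff$ every factor is nonzero $\iff$ by part~(a) every $\calO_{\bbX,p_j}$ is Gorenstein, i.e.\ $\bbX$ is locally Gorenstein. But "locally Gorenstein" is weaker than "arithmetically Gorenstein", so this alone does not finish (b); the genuine content is that for a $1$-dimensional graded $K[x_0]$-free Cohen–Macaulay ring $R$, $\vartheta_N(\overline R/K) \neq 0$ is equivalent to $R$ itself being Gorenstein. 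The right statement to invoke is that $\vartheta_N(R/K[x_0])$ is the annihilator of the canonical module $\omega_{R/K[x_0]}$ up to a twist — more precisely $R$ is Gorenstein over $K[x_0]$ iff $\omega_R \cong R$ up to shift iff $\vartheta_N$ contains a non-zerodivisor, and reducing mod $x_0$ preserves this because $x_0$ is a regular element on both $R$ and $\omega_R$. So the argument is: $\vartheta_N(\overline R/K)\neq 0 \iff \vartheta_N$ contains a homogeneous non-zerodivisor (namely some $x_0^k$ times a nonzero element, using that $R$ is $K[x_0]$-free hence torsion-free) $\iff \omega_{R/K[x_0]}$ is a cyclic $R$-module $\iff R$ is Gorenstein.

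\textbf{Main obstacle.} The delicate point is part~(b): getting from "$\vartheta_N(\overline R/K)\neq 0$" to "$R$ Gorenstein" rather than merely "$R$ locally Gorenstein". Naively, $\vartheta_N(\overline R/K) = \vartheta_N/x_0\vartheta_N$ versus $\vartheta_N/x_0 R \cap \vartheta_N$ must be reconciled, and one must be careful that $\overline R = R/\langle x_0\rangle$ is a $0$-dimensional \emph{graded} algebra whose Gorenstein-ness is equivalent to that of $R$ (true because $x_0$ is a homogeneous non-zerodivisor), and that the Noether different commutes with this base change — this is exactly \cite[Rules~10.3 and Prop.~10.17]{Kun1986}, which I would quote. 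The correct chain is: $R$ Gorenstein $\iff \overline R$ Gorenstein $\iff \vartheta_N(\overline R/K) \neq 0$ by the $0$-dimensional local case applied to $\overline R$ (which need not be local, so one decomposes $\overline R$ into local factors and uses part~(a) for \emph{each} factor of $\overline R$, \emph{not} the factors $\calO_{\bbX,p_j}$ of $S$). I expect this last bookkeeping — matching the local decomposition of $\overline R = R/\langle x_0\rangle$ with the statement, and confirming $\vartheta_N(\overline R/K)$ is the product of the Noether differents of those local factors — to be the only place requiring genuine care; everything else is citation of \cite{Kun1986} plus the inclusions $\vartheta_N^n \subseteq \vartheta_\bbX \subseteq \vartheta_N$.
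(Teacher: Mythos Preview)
The paper's own proof is a single sentence: ``This follows from \cite[Theorem~A.5]{EU2019}.'' So your plan to derive everything from Kunz's trace-form machinery in \cite{Kun1986} is a genuinely different (and more self-contained) route. For part~(a) your sketch is sound: the identification of $\vartheta_N(\calO_{\bbX,p_j}/K)$ with the socle in the Gorenstein case is exactly what \cite[F.10--F.12]{Kun1986} gives for a finite free algebra admitting a trace map, and the residue field $K(p_j)$ is separable over $K$ under the standing characteristic hypothesis, so nothing goes wrong there.

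For part~(b) there are two issues worth flagging. First, your intermediate claim ``$\vartheta_N(\overline R/K)\ne 0$ if and only if $\vartheta_N\ne\langle 0\rangle$'' is false: the paper's own Example following Proposition~\ref{CharGorNoetDiff}(a) exhibits a non-arithmetically-Gorenstein scheme with $\vartheta_N$ principal and nonzero, yet $\vartheta_N(\overline R/K)=0$ (the generator lies in $\langle x_0\rangle$). You correctly abandon this line, but the equivalence as stated is simply wrong and should be removed. Second, your ``main obstacle'' is a non-obstacle: $\overline R=R/\langle x_0\rangle$ is a positively graded artinian $K$-algebra with $\overline R_0=K$, hence it is already local with maximal ideal $\bigoplus_{i\ge 1}\overline R_i$ and residue field $K$. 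No product decomposition is needed. With that observation your corrected chain
\[
R\ \text{Gorenstein} \iff \overline R\ \text{Gorenstein} \iff \vartheta_N(\overline R/K)\ne 0
\]
goes through directly, the first equivalence because $x_0$ is a homogeneous non-zerodivisor, the second by the same $0$-dimensional local argument you give for part~(a) applied now to the local ring $\overline R$ with residue field $K$.
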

\begin{proof}
This follows from \cite[Theorem~A.5]{EU2019}.
\end{proof}

In $\bbP^2_K$, a 0-dimensional Gorenstein ring is also 
a 0-dimensional complete intersection. The lemma yields
the following immediate consequence.

\begin{corollary}
\begin{enumerate}
\item[(a)] If $\bbX$ is a 0-dimensional locally complete intersection 
in $\bbP^n_K$, then $\vartheta_\bbX(S/K)=\vartheta_N(S/K)$. 
\item[(b)] If $\bbX$ is a 0-dimensional scheme in $\bbP^2_K$, then 
$\vartheta_\bbX(S/K)=\vartheta_N(S/K)$ and 
$\vartheta_\bbX(\overline{R}/K)=\vartheta_N(\overline{R}/K)$.
\end{enumerate}
\end{corollary}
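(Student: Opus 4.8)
The plan is to deduce both parts from the local criteria in Proposition~\ref{prop:cipoint-ci} and Lemma~\ref{lem:LocallyGor-NDiff}, together with the two product decompositions $\vartheta_\bbX(S/K)=\prod_{j=1}^s\vartheta_\bbX(\calO_{\bbX,p_j}/K)$ (Remark~\ref{remark:3differents}) and $\vartheta_N(S/K)=\prod_{j=1}^s\vartheta_N(\calO_{\bbX,p_j}/K)$, thereby reducing everything to a comparison of the two differents of the algebra $\calO_{\bbX,p_j}/K$ at each point.

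For part~(a) I would argue as follows. If $\bbX$ is a locally complete intersection, then each $\calO_{\bbX,p_j}$ is a complete intersection, hence a local Gorenstein ring. By Proposition~\ref{prop:cipoint-ci}(a) the ideal $\vartheta_\bbX(\calO_{\bbX,p_j}/K)$ is the socle $\Ann_{\calO_{\bbX,p_j}}(\fm_{\bbX,p_j})$, and by Lemma~\ref{lem:LocallyGor-NDiff}(a) so is $\vartheta_N(\calO_{\bbX,p_j}/K)$; hence the two coincide for every $j$, and multiplying over $j$ gives $\vartheta_\bbX(S/K)=\vartheta_N(S/K)$.

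For part~(b), the genuinely new input --- and the one place where the hypothesis $n=2$ is used --- is the classical fact that a $0$-dimensional local (resp.\ graded) quotient of a $2$-variable regular local (resp.\ polynomial) ring is Gorenstein if and only if it is a complete intersection; this is a codimension-$2$ consequence of the Hilbert--Burch theorem and is the statement recorded just before the corollary. Granting it, the first equality follows by the same case analysis as in~(a): for a fixed $j$, if $\calO_{\bbX,p_j}$ is Gorenstein it is a complete intersection and both local differents equal the socle, while if $\calO_{\bbX,p_j}$ is not Gorenstein then $\vartheta_N(\calO_{\bbX,p_j}/K)=0$ by Lemma~\ref{lem:LocallyGor-NDiff}(a), and since a non-Gorenstein local ring is not a complete intersection, $\vartheta_\bbX(\calO_{\bbX,p_j}/K)=0$ by Proposition~\ref{prop:cipoint-ci}(a); in either case the two local differents agree, so the products agree. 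For the second equality I would pass to the global criteria Proposition~\ref{prop:cipoint-ci}(b) and Lemma~\ref{lem:LocallyGor-NDiff}(b) and the identities $\vartheta_\bbX(\overline{R}/K)=\vartheta_\bbX/\ideal{x_0}$, $\vartheta_N(\overline{R}/K)=\vartheta_N/\ideal{x_0}$: if $\bbX$ is arithmetically Gorenstein then, being a $0$-dimensional Gorenstein ring in $\bbP^2_K$, it is a complete intersection, so $\vartheta_\bbX=\vartheta_N$ as ideals of $R$ and the reduced differents coincide; if $\bbX$ is not arithmetically Gorenstein then $\vartheta_N(\overline{R}/K)=0$, and $\bbX$ is not a complete intersection (a complete intersection is arithmetically Gorenstein), so $\vartheta_\bbX(\overline{R}/K)=0$ as well.

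I do not expect a serious obstacle here: once the local/graded version of ``codimension $2$ Gorenstein $\Rightarrow$ complete intersection'' is cited precisely, the rest is a routine assembly of the earlier results, and the main care needed is in keeping the non-Gorenstein cases consistent between the two families of differents.
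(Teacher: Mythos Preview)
Your argument is correct and matches the paper's intended proof: the corollary is stated as an immediate consequence of Lemma~\ref{lem:LocallyGor-NDiff}, together with Proposition~\ref{prop:cipoint-ci}, the product decompositions of $\vartheta_\bbX(S/K)$ and $\vartheta_N(S/K)$, and the codimension-$2$ fact (recorded just before the corollary) that in $\bbP^2_K$ Gorenstein implies complete intersection, both locally and globally. Your case splits for part~(b) and your use of $\vartheta_\bbX=\vartheta_N$ for complete intersections are exactly what the paper has in mind.
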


A point $p_j\in\Supp(\bbX)=\{p_1,...,p_s\}$ is called 
a \textit{Gorenstein point} of $\bbX$ if $\calO_{\bbX,p_j}$ is 
a local Gorenstein ring. Using an argument similar to that given
in the proof of Proposition~\ref{prop_HF-Kdiff}, we obtain
a formula for the Hilbert polynomial of $\vartheta_N$, as follows.

\begin{corollary}\label{cor:HF-Ndiff}
Let $\bbX_{\rm gor}$ be the set of Gorenstein points in the support 
of~$\bbX$. Then the Hilbert polynomial of $\vartheta_N$ is given by
$$
\HP_{\vartheta_N} = 
\sum_{p_j\in \bbX_{\rm gor}} \dim_K K(p_j)
= \sum_{p_j\in \bbX_{\rm gor}} \kappa_j
$$ 
In particular, $\bbX$ is locally Gorenstein if and only if
$\HP_{\vartheta_N} =\sum_{j=1}^s \dim_K K(p_j)$.
\end{corollary}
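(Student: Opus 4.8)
The plan is to follow the proof of Proposition~\ref{prop_HF-Kdiff} almost verbatim, with the Noether different in place of the K\"ahler different. First I would note that $\vartheta_N$ is a finitely generated homogeneous ideal of $R$, so that $\ri(\vartheta_N)$ is finite, and that $x_0$ is a non-zerodivisor of $R$. Exactly as in the cited proof, for every $i \ge \ri(\vartheta_N)$ we have $(\vartheta_N)_i = x_0^{i-\ri(\vartheta_N)}(\vartheta_N)_{\ri(\vartheta_N)}$, hence the injection $\widetilde{\imath}\colon R \to Q^h(R) \cong S[x_0,x_0^{-1}]$ of~(\ref{Equa:ImathMap}) carries $(\vartheta_N)_{\ri(\vartheta_N)}$ onto $\vartheta_N(S/K)\, x_0^{\ri(\vartheta_N)}$, where $\vartheta_N(S/K) = \vartheta_N/\ideal{x_0-1}$. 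Since $\widetilde{\imath}$ is injective, this gives
$$
\HP_{\vartheta_N} = \HF_{\vartheta_N}(\ri(\vartheta_N)) = \dim_K(\vartheta_N)_{\ri(\vartheta_N)} = \dim_K \vartheta_N(S/K).
$$

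Next I would use the product decomposition $\vartheta_N(S/K) = \prod_{j=1}^s \vartheta_N(\calO_{\bbX,p_j}/K)$ recalled before Lemma~\ref{lem:LocallyGor-NDiff} to obtain $\HP_{\vartheta_N} = \sum_{j=1}^s \dim_K \vartheta_N(\calO_{\bbX,p_j}/K)$, and then apply Lemma~\ref{lem:LocallyGor-NDiff}(a): for $p_j \notin \bbX_{\rm gor}$ one has $\vartheta_N(\calO_{\bbX,p_j}/K) = 0$, while for $p_j \in \bbX_{\rm gor}$ the local ring $\calO_{\bbX,p_j}$ is Gorenstein with $\vartheta_N(\calO_{\bbX,p_j}/K) = \Ann_{\calO_{\bbX,p_j}}(\fm_{\bbX,p_j})$, and the latter socle has dimension $\dim_K K(p_j) = \kappa_j$ over $K$ (being one-dimensional over the residue field). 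Summing, $\HP_{\vartheta_N} = \sum_{p_j \in \bbX_{\rm gor}} \kappa_j$, which is the asserted formula. For the final equivalence, since each $\kappa_j \ge 1$ the formula shows $\HP_{\vartheta_N} \le \sum_{j=1}^s \dim_K K(p_j)$ with equality precisely when $\bbX_{\rm gor} = \Supp(\bbX)$, that is, when every $\calO_{\bbX,p_j}$ is local Gorenstein, i.e.\ when $\bbX$ is locally Gorenstein.

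There is essentially no serious obstacle here; the only point I would spell out is the identification of the image of $(\vartheta_N)_{\ri(\vartheta_N)}$ under $\widetilde{\imath}$ with $\vartheta_N(S/K)\,x_0^{\ri(\vartheta_N)}$, which follows from $\vartheta_N(S/K) = \vartheta_N/\ideal{x_0-1}$ together with the relation $(\vartheta_N)_i = x_0^{i-\ri(\vartheta_N)}(\vartheta_N)_{\ri(\vartheta_N)}$ for $i \ge \ri(\vartheta_N)$, exactly as in Proposition~\ref{prop_HF-Kdiff}. In contrast to the regularity-index bounds proved there, no extension of the base field is needed for the statement about $\HP_{\vartheta_N}$.
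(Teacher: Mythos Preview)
Your proposal is correct and follows exactly the approach the paper itself indicates: the paper does not give a separate proof of Corollary~\ref{cor:HF-Ndiff} but simply says ``Using an argument similar to that given in the proof of Proposition~\ref{prop_HF-Kdiff}'', replacing Proposition~\ref{prop:cipoint-ci} by Lemma~\ref{lem:LocallyGor-NDiff}. Your write-up is precisely that argument, including the correct observation that no base-field extension is needed for the Hilbert-polynomial statement.
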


Notice that the homogeneous ring of quotients $Q^h(R)$ of $R$
is a graded-free $L_o$-module of rank $\deg(\bbX)$, where
$L_o \!=\! K[x_0,x_0^{-1}]$. Set $\omega_{Q^h(R)}\!=\!\Hom_{L_o}(Q^h(R),L_o)$. 
Based on \cite[Proposition~3.1]{KLL2019}, the scheme $\bbX$ is locally 
Gorenstein if and only if $\omega_{Q^h(R)} \cong Q^h(R)$.
In this case, a homogeneous element $\sigma \in \omega_{Q^h(R)}$
with $\omega_{Q^h(R)} = Q^h(R)\cdot \sigma$ is a 
\textit{homogeneous trace map} of $Q^h(R)/L_o$.
For further information on (canonical, homogeneous) trace maps
we refer to \cite[Appendix~F]{Kun1986}.

Let $\sigma$ be a fixed homogeneous trace map of degree zero
of $Q^h(R)/L_o$. There is a canonical monomorphism of graded $R$-modules
\begin{equation}\label{mapChaSec2.2.7}
\begin{aligned}
\Phi: \omega_R(1)
&\lhook\joinrel\longrightarrow \omega_{Q^h(R)}
= Q^h(R)\cdot\sigma \xrightarrow{\sim} Q^h(R)\\
\varphi &\longmapsto \varphi\otimes {\rm id}_{L_o}.
\end{aligned}
\end{equation}
The image of $\Phi$ is a homogeneous fractional ideal~$\fC_{\bbX}^\sigma$ 
of~$Q^h(R)$. It is also a finitely generated graded $R$-module and
$\HF_{\fC_{\bbX}^\sigma}(i) = \deg(\bbX)-\HF_\bbX(-i-1)$ for all $i\in\bbZ$.

\begin{definition}
The graded $R$-module $\fC_{\bbX}^\sigma$ is called the 
\textit{Dedekind complementary module} of $\bbX$ (or of $R/K[x_0]$)
with respect to $\sigma$. Its inverse 
$$
\vartheta_D^\sigma = \{\, f \in Q^h(R) \,\mid\, f\cdot\fC_{\bbX}^\sigma
\subseteq R\,\}
$$
is called the \textit{Dedekind different} of $\bbX$ (or of~$R/K[x_0]$).
\end{definition}

The Dedekind different $\vartheta_D^\sigma$ of $\bbX$ is a homogeneous
ideal of $R$ and has the following basic properties (see \cite[Propositions~3.7-8]{KL2017} and \cite[Corollary~3.6]{KLL2019}).

\begin{proposition}\label{prop:HF-DDiff}
Let $\bbX\subseteq \bbP^n_K$ be a 0-dimensional locally Gorenstein
scheme, and let $\sigma$ be a homogeneous trace map of degree zero
of $Q^h(R)/L_o$.
\begin{enumerate}
\item[(a)] We have $\HP_{\vartheta_D^\sigma} = \deg(\bbX)$ and
$r_\bbX\le \ri(\vartheta_D^\sigma)\le 2r_\bbX$.
\item[(b)] If $\bbX$ is reduced, then $(\vartheta_D^\sigma)^n \subseteq 
\vartheta_\bbX \subseteq \vartheta_N = \vartheta_D^\sigma$ 
and $\ri(\vartheta_N)=\ri(\vartheta_D^\sigma)=2r_\bbX$.
\end{enumerate}
\end{proposition}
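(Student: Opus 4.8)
The plan is to derive part~(a) purely from the Hilbert-function identity $\HF_{\fC_\bbX^\sigma}(i)=\deg(\bbX)-\HF_\bbX(-i-1)$ recorded above together with the ambient description $Q^h(R)\cong S[x_0,x_0^{-1}]$, and then to obtain part~(b) by combining the general inclusions $\vartheta_N^n\subseteq\vartheta_\bbX\subseteq\vartheta_N$ coming from \cite[Propositions~10.17-18]{Kun1986} with the coincidence of the Noether and Dedekind differents that holds once $\bbX$ is reduced.

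For~(a), the first step is to observe that $R\subseteq\fC_\bbX^\sigma$. Indeed, $\HF_{\fC_\bbX^\sigma}(i)=\deg(\bbX)=\dim_K(Q^h(R))_i$ for every $i\ge 0$, so the graded $R$-submodule $\fC_\bbX^\sigma$ of $Q^h(R)$ satisfies $(\fC_\bbX^\sigma)_i=(Q^h(R))_i=Sx_0^i$ for $i\ge 0$, while $(\fC_\bbX^\sigma)_i=0$ for $i<-r_\bbX$ since there $\HF_\bbX(-i-1)=\deg(\bbX)$; in particular $R_i\subseteq(\fC_\bbX^\sigma)_i$ for all $i$. Hence any $f\in\vartheta_D^\sigma$ lies in $f\cdot\fC_\bbX^\sigma\subseteq R$, so $\vartheta_D^\sigma$ is a homogeneous ideal of $R$ and $\HF_{\vartheta_D^\sigma}(i)\le\HF_\bbX(i)$ for every $i$. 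The second step is the inclusion $R_{2r_\bbX}\subseteq\vartheta_D^\sigma$: for $f\in R_{2r_\bbX}$ and any $i$, if $i\ge -r_\bbX$ then $f\cdot(\fC_\bbX^\sigma)_i\subseteq(Q^h(R))_{2r_\bbX+i}=R_{2r_\bbX+i}$ because $2r_\bbX+i\ge r_\bbX$, and if $i<-r_\bbX$ then $(\fC_\bbX^\sigma)_i=0$; either way $f\cdot\fC_\bbX^\sigma\subseteq R$. Since $x_0^{\,i-2r_\bbX}R_{2r_\bbX}=R_i$ for every $i\ge 2r_\bbX$ (both $K$-vector spaces have dimension $\deg(\bbX)$), the ideal $\vartheta_D^\sigma$ contains $R_i$ for all $i\ge 2r_\bbX$, whence $\HP_{\vartheta_D^\sigma}=\deg(\bbX)$ and $\ri(\vartheta_D^\sigma)\le 2r_\bbX$. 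The lower bound then follows from $\HF_{\vartheta_D^\sigma}(r_\bbX-1)\le\HF_\bbX(r_\bbX-1)<\deg(\bbX)=\HP_{\vartheta_D^\sigma}$.

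For~(b), if $\bbX$ is reduced then the standing hypothesis on $\operatorname{char}(K)$ makes every point smooth, so each $\calO_{\bbX,p_j}=K(p_j)$ is a finite separable field extension of $K$ and, in particular, a zero-dimensional Gorenstein ring; thus $\bbX$ satisfies the hypotheses of~(a), the algebras $S/K$ and $\widetilde R/K[x_0]$ are étale, and the trace form of $R/K[x_0]$ is nondegenerate. In this situation the Noether and Dedekind differents coincide, $\vartheta_N=\vartheta_D^\sigma$, which is \cite[Corollary~3.6]{KLL2019} (see also \cite[Propositions~3.7-8]{KL2017}); combined with $\vartheta_N^n\subseteq\vartheta_\bbX\subseteq\vartheta_N$ this yields the chain $(\vartheta_D^\sigma)^n\subseteq\vartheta_\bbX\subseteq\vartheta_N=\vartheta_D^\sigma$, and $\ri(\vartheta_N)=\ri(\vartheta_D^\sigma)$ is then automatic. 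To pin the common regularity index to $2r_\bbX$ it remains, by~(a), to show $\HF_{\vartheta_D^\sigma}(2r_\bbX-1)<\deg(\bbX)$; I would establish this after a base extension reducing to $K$-rational support, by producing a nonzero element of $\fC_\bbX^\sigma$ in degree $-r_\bbX$ whose product with a well-chosen form of degree $2r_\bbX-1$ fails to lie in $R$ — this is exactly the local analysis of the Dedekind complementary module carried out in \cite[Corollary~3.6]{KLL2019}.

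The step I expect to be the main obstacle is this last, reduced-case input: both the identity $\vartheta_N=\vartheta_D^\sigma$ and the sharpness $\ri(\vartheta_D^\sigma)=2r_\bbX$ depend on the local structure of $\fC_\bbX^\sigma$ at the individual points $p_j$, which the graded Hilbert-function bookkeeping used in part~(a) does not detect and which is where the cited results of \cite{KL2017,KLL2019} do the real work; granting those, everything else above is a short formal consequence of the material already developed in this section.
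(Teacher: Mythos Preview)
The paper does not actually prove this proposition: it is stated with the parenthetical ``(see \cite[Propositions~3.7--8]{KL2017} and \cite[Corollary~3.6]{KLL2019})'' and no further argument. So there is no in-paper proof to compare against; the intended route is simply to quote those two references.

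Your proposal is correct and, for part~(a), more self-contained than what the paper offers. The Hilbert-function bookkeeping you carry out --- $(\fC_\bbX^\sigma)_i=(Q^h(R))_i$ for $i\ge 0$, $(\fC_\bbX^\sigma)_i=0$ for $i<-r_\bbX$, hence $R_{2r_\bbX}\cdot\fC_\bbX^\sigma\subseteq R$ and $\HF_{\vartheta_D^\sigma}(r_\bbX-1)\le\HF_\bbX(r_\bbX-1)<\deg(\bbX)$ --- is a clean direct derivation of $\HP_{\vartheta_D^\sigma}=\deg(\bbX)$ and $r_\bbX\le\ri(\vartheta_D^\sigma)\le 2r_\bbX$ from the identity $\HF_{\fC_\bbX^\sigma}(i)=\deg(\bbX)-\HF_\bbX(-i-1)$ already recorded in the paper. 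This buys you independence from the cited sources for~(a), whereas the paper simply outsources the whole statement. For part~(b) you correctly identify that the substantive content --- the equality $\vartheta_N=\vartheta_D^\sigma$ in the reduced case and the sharpness $\ri(\vartheta_D^\sigma)=2r_\bbX$ --- lives in \cite{KL2017} and \cite{KLL2019}, which is exactly where the paper points; the chain $(\vartheta_D^\sigma)^n\subseteq\vartheta_\bbX\subseteq\vartheta_N$ then follows from \cite[Propositions~10.17--18]{Kun1986} as you say. Your candid acknowledgement that the local analysis at the points $p_j$ is the real work, and that you are invoking rather than reproving it, matches the paper's own stance.
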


\begin{remark}\label{rem:N-D-DiffDiagram}
Let us discuss more about the relation between the Dedekind and Noether
differents of a 0-dimensional locally Gorenstein scheme 
$\bbX\subseteq\bbP^n_K$. Let 
$\mu_{Q^h(R)}: (Q^h(R))^e= Q^h(R)\otimes_{L_o}Q^h(R) \rightarrow Q^h(R),
f\otimes g \mapsto fg$ and $\mathcal{J}=\Ker(\mu_{Q^h(R)})$.
By~\cite[F.9]{Kun1986}, this is an isomorphism of graded $R$-modules
$$
\Theta: \Ann_{(Q^h(R))^e}(\mathcal{J}) \stackrel{\sim}{\longrightarrow}
\Hom_{Q^h(R)}(\omega_{Q^h(R)}, Q^h(R))
$$
given by $f= \sum_k f_k\otimes g_k \mapsto \Theta(f)$
with $\Theta(f)(\varphi) = \sum_k \varphi(f_k)g_k$
and $\varphi\in \omega_R(1)$. Let ${\rm Tr}:= {\rm Tr}_{Q^h(R)/L_o}$
be the canonical trace map of $Q^h(R)/L_o$ and consider 
the diagram
\[
\xymatrix{
\Ann_{R^e}(J)  \ar[rr]^-{\Theta} \ar@{^{(}->}[d]
&& \Hom_{R}(\omega_{R}(1),R)
\ar@{^{(}->}[d] \\
\Ann_{(Q^h(R))^e}(\mathcal{J})  \ar[rr]^-{\Theta} \ar[d]_{\mu_{Q^h(R)}}
&& \Hom_{Q^h(R)}(\omega_{Q^h(R)}, Q^h(R)) \ar[d]^-{\Psi_{\sigma}}
\ar[lld]_-{\Psi_{\rm Tr}}\\
Q^h(R) \ar@{=}[rr] && Q^h(R)
}
\]
where $\Psi_{\sigma}$ is an isomorphism given by
$\Psi_{\sigma}(\Theta(f))=\Theta(f)(\sigma)$ and where $\Psi_{\rm Tr}$ 
is a homomorphism with $\Psi_{\rm Tr}(\Theta(f))= \Theta(f)({\rm Tr})$
for $f\in\Ann_{L^e}(\mathcal{J})$. Then we have 
$\mu_{Q^h(R)} = \Psi_{\rm Tr}\circ\Theta$. Hence
$\vartheta_N = \Psi_{\rm Tr}\circ\Theta(\Ann_{R^e}(J))$
and $\vartheta_D^\sigma =\Psi_\sigma\circ\Theta(\Ann_{R^e}(J))$.
In particular, if ${\rm Tr}$ is a trace map of $Q^h(R)/L_o$, then
$\vartheta_N=\vartheta_D^{\rm Tr}$.
\end{remark}

Using the Dedekind and Noether differents, we can characterize
0-dimensional arithmetically Gorenstein schemes as follows.

\begin{proposition}\label{CharGorNoetDiff}
Let $\bbX$ be a 0-dimensional scheme in $\bbP^n_K$.
The following assertions are equivalent.
\begin{itemize}
  \item[(a)] $\bbX$ is arithmetically Gorenstein.
  \item[(b)] $\bbX$ is a locally Gorenstein CB-scheme 
  and $\HF_{\vartheta_D^\sigma}(r_\bbX )\ne 0$,
  where $\sigma$ is a homogeneous trace map of $Q^h(R)/L_o$
  of degree zero.
  \item[(c)] $\bbX$ is a locally Gorenstein CB-scheme 
  and $\HF_{\vartheta_N}(r_\bbX)\ne 0$.
  \item[(d)] $\bbX$ is a CB-scheme and
  $\HP_{\vartheta_N} = \sum_{j=1}^s\dim_K K(p_j)$ 
  and $\HF_{\vartheta_N}(r_\bbX)\ne 0$.
\end{itemize}
If these conditions are satisfied, then $\vartheta_N$ is a principal ideal
generated by a non-zero homogeneous element of degree $r_\bbX$ and
$\ri(\vartheta_N)\le 2r_\bbX$.
\end{proposition}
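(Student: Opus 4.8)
The plan is to prove the chain of implications (a)$\Rightarrow$(b)$\Rightarrow$(c)$\Rightarrow$(d)$\Rightarrow$(a), using Remark~\ref{rem:N-D-DiffDiagram} to pass freely between $\vartheta_N$ and $\vartheta_D^\sigma$ whenever a trace map exists. First I would record that if $\bbX$ is locally Gorenstein, then $Q^h(R)$ is self-dual over $L_o$, so there is a homogeneous trace map $\sigma$ of degree zero and, by Remark~\ref{rem:N-D-DiffDiagram}, $\vartheta_N = \vartheta_D^{\mathrm{Tr}}$ provided $\mathrm{Tr}$ is itself a trace map; in general $\vartheta_N = \Psi_{\mathrm{Tr}}\circ\Theta(\Ann_{R^e}(J))$ while $\vartheta_D^\sigma = \Psi_\sigma\circ\Theta(\Ann_{R^e}(J))$, so the two differ only by multiplication by the unit relating $\sigma$ and $\mathrm{Tr}$ in $Q^h(R)$, and therefore have the same Hilbert function. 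This reduces (b)$\Leftrightarrow$(c) to a formal observation. For (a)$\Rightarrow$(b): an arithmetically Gorenstein $\bbX$ is a locally Gorenstein CB-scheme (cited in the text), and Gorensteinness means $\omega_R \cong R(a)$ for some shift $a$; chasing degrees through $\HF_{\fC_\bbX^\sigma}(i) = \deg(\bbX) - \HF_\bbX(-i-1)$ and the symmetry of $\HF_\bbX$ pins down $a = -r_\bbX - 1$ say, so that $\vartheta_D^\sigma$, being the inverse fractional ideal of $\fC_\bbX^\sigma$, is itself a shift of $R$ and is nonzero exactly starting in degree $r_\bbX$; in particular $\HF_{\vartheta_D^\sigma}(r_\bbX)\ne 0$. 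The final sentence of the proposition (principal, generator in degree $r_\bbX$, $\ri\le 2r_\bbX$) drops out of this same computation.

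For (c)$\Rightarrow$(d) I would use Corollary~\ref{cor:HF-Ndiff}: a locally Gorenstein scheme has $\HP_{\vartheta_N} = \sum_{j=1}^s \dim_K K(p_j)$, which is exactly the extra hypothesis appearing in (d), so (c) implies (d) immediately once we note that (c) already contains ``CB-scheme'' and ``$\HF_{\vartheta_N}(r_\bbX)\ne 0$''. The substance is (d)$\Rightarrow$(a). Here the hypothesis $\HP_{\vartheta_N} = \sum_j \dim_K K(p_j)$ forces, via Corollary~\ref{cor:HF-Ndiff}, that $\bbX$ is locally Gorenstein, so a trace map exists and $\vartheta_N$ and $\vartheta_D^\sigma$ share a Hilbert function; in particular $\HF_{\vartheta_D^\sigma}(r_\bbX)\ne 0$. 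Since $\bbX$ is a CB-scheme, Proposition~\ref{prop:CBP-Conductor} gives $\fF_{\widetilde R/R} = \bigoplus_{i\ge r_\bbX}R_i$, and I would combine this with the conductor-colength identity (from the Ap\'ery--Gorenstein--Samuel circle of ideas, Corollary~\ref{corSec3.5-1}): it suffices to show $\HF_\bbX$ is symmetric, i.e. $\HF_\bbX(i) = \deg(\bbX) - \HF_\bbX(r_\bbX - 1 - i)$ for all $i$. The inequality ``$\le$'' is automatic for a CB-scheme (stated in the proof of Corollary~\ref{corSec3.5-1}). For the reverse inequality I would exploit $\HF_{\vartheta_D^\sigma}(r_\bbX)\ne 0$: a nonzero element of $(\vartheta_D^\sigma)_{r_\bbX}$ multiplies $\fC_\bbX^\sigma$ into $R$, and because $\fC_\bbX^\sigma \cong \omega_R(1)$ with $\HF_{\fC_\bbX^\sigma}(i) = \deg(\bbX) - \HF_\bbX(-i-1)$, such an element of minimal possible degree $r_\bbX$ gives an injection $\omega_R(1) \hookrightarrow R(r_\bbX)$, i.e. $\omega_R \hookrightarrow R(r_\bbX - 1)$ after a shift; comparing Hilbert functions of source and target in each degree yields $\deg(\bbX) - \HF_\bbX(r_\bbX - 1 - i) \le \HF_\bbX(i)$, the missing inequality. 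Then symmetry holds, and \cite[Theorem~6.8]{KLR2019} gives arithmetic Gorensteinness.

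The main obstacle I expect is the bookkeeping in (d)$\Rightarrow$(a): one must be careful that $\HF_{\vartheta_N}(r_\bbX)\ne 0$ really transfers to $\HF_{\vartheta_D^\sigma}(r_\bbX)\ne 0$ (this needs the local Gorenstein reduction so that $\sigma$ exists and Remark~\ref{rem:N-D-DiffDiagram} applies), and that the degree shift in the injection $\omega_R \hookrightarrow R(r_\bbX-1)$ is exactly right so the Hilbert-function comparison lands on the symmetry relation rather than a weaker shifted inequality. A secondary subtlety is verifying that the element realizing $\HF_{\vartheta_D^\sigma}(r_\bbX)\ne 0$ is a non-zerodivisor of $R$ (so that multiplication by it is injective on the torsion-free module $\fC_\bbX^\sigma$): this follows because $\vartheta_D^\sigma$ is an ideal of the one-dimensional Cohen--Macaulay ring $R$ containing, in high degree, powers of $x_0$, so any nonzero homogeneous element of it is a non-zerodivisor. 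Once these points are pinned down, the rest is assembling the cited propositions, and the closing assertions about $\vartheta_N$ being principal of degree $r_\bbX$ with $\ri(\vartheta_N)\le 2r_\bbX$ follow from Proposition~\ref{prop:HF-DDiff}(a) together with the identification $\vartheta_N = \vartheta_D^{\mathrm{Tr}}$ in the Gorenstein case and the fact that $\omega_R$ is then free of rank one.
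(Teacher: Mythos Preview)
Your cycle breaks at (b)$\Rightarrow$(c). The claim that $\vartheta_N$ and $\vartheta_D^\sigma$ ``differ only by multiplication by the unit relating $\sigma$ and $\mathrm{Tr}$'' is not correct in general: writing $\mathrm{Tr}=u\sigma$ with $u\in (Q^h(R))_0\cong S$, the element $u$ is a unit in $Q^h(R)$ precisely when $\mathrm{Tr}$ is itself a trace map, and this fails as soon as some local ring $\calO_{\bbX,p_j}$ is non-reduced (the trace form on a non-reduced Artinian local ring is degenerate). Hence you cannot conclude that the two differents have the same Hilbert function, and in particular you cannot pass from $\HF_{\vartheta_D^\sigma}(r_\bbX)\ne 0$ to $\HF_{\vartheta_N}(r_\bbX)\ne 0$ this way. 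The paper sidesteps this asymmetry: it cites \cite[Proposition~5.8]{KL2017} for (a)$\Leftrightarrow$(b) and proves (a)$\Rightarrow$(c) directly, using the standing hypothesis $\mathrm{char}(K)\nmid\deg(\bbX)$ to get $\mathrm{Tr}_{R/K[x_0]}(1)=\deg(\bbX)\ne 0$, which forces the explicit generator $\mu(\Delta_{\tilde\sigma})$ of $\vartheta_N$ to be nonzero of degree $r_\bbX$. Only the easy direction (c)$\Rightarrow$(b) is done through Remark~\ref{rem:N-D-DiffDiagram}, via the isomorphism $\Psi_\sigma\circ\Theta$.

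There is a second gap in your (d)$\Rightarrow$(a). The assertion ``$\vartheta_D^\sigma$ contains powers of $x_0$ in high degree, so any nonzero homogeneous element of it is a non-zerodivisor'' is a non sequitur: an ideal in a one-dimensional Cohen--Macaulay ring that eventually agrees with $R$ can still contain zerodivisors in low degrees. Without a non-zerodivisor in $(\vartheta_D^\sigma)_{r_\bbX}$ your multiplication map $\fC_\bbX^\sigma\to R$ need not be injective, and the Hilbert-function comparison that is supposed to yield symmetry of $\HF_\bbX$ collapses. (Your transfer $\HF_{\vartheta_N}(r_\bbX)\ne 0\Rightarrow\HF_{\vartheta_D^\sigma}(r_\bbX)\ne 0$ \emph{is} fine --- lift to $\Ann_{R^e}(J)$ and apply the isomorphism $\Psi_\sigma\circ\Theta$ --- and this is exactly the paper's (c)$\Rightarrow$(b).) Finally, your closing appeal to ``$\vartheta_N=\vartheta_D^{\mathrm{Tr}}$ in the Gorenstein case'' repeats the first error; the paper instead reads off principality and the degree of the generator from the explicit element $\mu(\Delta_{\tilde\sigma})$ constructed in the proof of (a)$\Rightarrow$(c).
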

\begin{proof}
The equivalence of (a) and (b) follows from \cite[Proposition~5.8]{KL2017}.
Now we prove the implication ``(a)$\Rightarrow$(c)''. 
Suppose that $\bbX$ is arithmetically Gorenstein.  
It follows at once that $\bbX$ is a locally Gorenstein CB-scheme.
By \cite[E.16]{Kun1986}, there exists a homogeneous trace map 
$\tilde{\sigma}$ of degree $-r_\bbX$ of~$R/K[x_0]$ such that 
$\omega_{R}(1)=R\cdot\tilde{\sigma}$.
Since $R$ is a graded-free $K[x_0]$-algebra of rank $d:=\deg(\bbX)$,
we let $B=\{b_1,...,b_d\}$ be a homogeneous $K[x_0]$-basis of $R$, 
let $\{b'_1,...,b'_d\}$ be the dual basis of $B$ with respect to 
$\widetilde{\sigma}$, i.e., $\widetilde{\sigma}(b'_ib_j)=\delta_{ij}$ 
for all $i,j \in \{1,...,d\}$, and let $\Delta_{\widetilde{\sigma}}
= \sum_{i=1}^d b'_i\otimes b_i$.
By \cite[F.10]{Kun1986}, the graded $R$-module $\Ann_{R^e}(J)$ 
is free of rank~$1$ and it is generated by~$\Delta_{\tilde{\sigma}}$.
So, $\vartheta_N$ is a principal ideal generated 
by~$\mu(\Delta_{\tilde{\sigma}})$.
Since ${\rm char}(K) \nmid d$,  the canonical trace map of $R/K[x_0]$ 
satisfies ${\rm Tr}_{R/K[x_0]}(1) = d \ne 0$.  
It follows from \cite[F.12]{Kun1986} that 
${\rm Tr}_{R/K[x_0]}=\mu(\Delta_{\tilde{\sigma}})\cdot\tilde{\sigma}$,
and hence $\mu(\Delta_{\tilde{\sigma}}) \ne 0$.
Moreover, by \cite[F.4]{Kun1986}, ${\rm Tr}_{R/K[x_0]}=\sum_{i=1}^d
b_i\cdot b_i^*$, where $\{b_1^*,...,b_d^*\}\subseteq \omega_R$ 
is the dual basis of $B$. 
Since $\deg(b_i\cdot b_i^*)=0$ for $i=1,...,d$, 
${\rm Tr}_{R/K[x_0]}$ is homogeneous of degree zero. Therefore
we get $\deg(\mu(\Delta_{\tilde{\sigma}})) = r_\bbX$.

Next, we prove the implication ``(c)$\Rightarrow$(b)''.
For this, it suffices to show that $\HF_{\vartheta_D^\sigma}(r_\bbX) \ne 0$.
Since $\HF_{\vartheta_N}(r_\bbX)\ne 0$, we find a non-zero homogeneous 
element $f\in (\Ann_{R^e}(J))_{r_\bbX}$. 
By Remark~\ref{rem:N-D-DiffDiagram}, the map $\Psi_{\sigma}\circ\Theta: \Ann_{R^e}(J)\rightarrow \vartheta_D^\sigma$ is an isomorphism of graded
$R$-modules. Hence $\Psi_{\sigma}\circ\Theta(f) \in
\vartheta_D^\sigma\setminus\{0\}$.
Furthermore, the equivalence of (c) and (d) follows by 
Corollary~\ref{cor:HF-Ndiff}.

Finally, the additional claim for the regularity index
of $\vartheta_N$ follows from the fact that the $R$-linear map 
$\widetilde{\imath}: R\rightarrow S[x_0]$
sends $(\vartheta_N)_{r_\bbX}\cdot R_{r_\bbX}$ 
onto $\vartheta_N(S/K)x_0^{2r_\bbX}$ and $\HP_{\vartheta_N}=\dim_K(\vartheta_N(S/K))$.
\end{proof}

According to Proposition~\ref{prop:HF-DDiff} and 
\cite[Proposition~5.8]{KL2017}, a reduced 0-dimensional scheme $\bbX$ 
is arithmetically Gorenstein if and only if its Noether different 
is a principal ideal. In this case, we have $\ri(\vartheta_N)=2r_\bbX$.
However, when $\bbX$ is not reduced, the 
Noether different of $\bbX$ can be a principal ideal
without $\bbX$ being an arithmetically Gorenstein scheme.

\begin{example}
\begin{enumerate}
\item[(a)] Let $\bbX$ be the 0-dimensional scheme in $\bbP^2_{\bbQ}$
defined by the ideal $I_\bbX=\ideal{F_1,F_2,F_3}$, where 
$F_1= X_{2}^2$, $F_2=X_0X_{2} -X_{1}X_{2}$, and
$F_3=X_0^4  -2X_0^3 X_{1} +2X_0^2 X_{1}^2  -2X_0X_{1}^3  +X_{1}^4$.
Then $\deg(\bbX)=5$, $\Supp(\bbX)=\{p_1,p_2\}$ and
$I_\bbX = I_{p_1}\cap I_{p_2}$ with $I_{p_1} = (\ideal{X_1-X_0,X_2})^2$
and $I_{p_2} = \ideal{X_1^2+X_0^2,X_2}$. Also, the Hilbert function
of $\bbX$ is $\HF_\bbX:\ 1\ 3\ 4\ 5\ 5\cdots$ and $r_\bbX=3$.
Since $\HF_\bbX$ is not symmetric, the scheme $\bbX$ is 
not arithmetically Gorenstein. However, the Noether different of $\bbX$
is a principal ideal given by $\vartheta_N=
\ideal{ x_0^3 x_1 -3 x_0^2 x_1^2  +3 x_0 x_1^3  -x_1^4}$.
Its Hilbert function is $\HF_{\vartheta_N}:\ 0\ 0\ 0\ 0\ 1\ 2\ 2\cdots$,
and so $\HF_{\vartheta_N}(r_\bbX)=0$ and $\ri(\vartheta_N)=5<2r_\bbX$.

\item[(b)] Consider the 0-dimensional scheme $\bbX\subseteq\bbP^3_{\bbQ}$
defined by the ideal $I_\bbX=\ideal{F_1,...,F_5}$, where
$F_1= X_0X_3 -2X_1X_3 -X_{3}^2$, $F_2=X_{2}^2 -X_{1}X_{3} -2X_{2}X_{3}$,
$F_3=X_{1}X_{2} +X_{2}X_{3}$, $F_4=X_{0}X_{2} +X_{2}X_{3}$, and
$F_5=X_{1}^2 +X_{1}X_{3}$. We have $\deg(\bbX)=5$, 
$\Supp(\bbX)=\{p_1,p_2,p_3\}$ and $I_\bbX=I_{p_1}\cap I_{p_2}\cap I_{p_3}$
with $I_{p_1}=\ideal{X_1,X_2,X_3-X_0}$, 
$I_{p_2}=\ideal{X_1-X_0,(X_2-X_0)^2,X_3-X_0}$,
and $I_{p_3}=\ideal{X_1^2,X_2,X_3}$. We also have
$\HF_\bbX:\ 1\ 4\ 5\ 5\cdots$, $r_\bbX=2$, and 
$\HF_{\vartheta_N}:\ 0\ 0\ 1\ 3\ 3\cdots$. Since $\HP_{\vartheta_N}=3$ 
and $\deg_\bbX(p_i)=2$ for $i=1,2,3$, the scheme $\bbX$ is a locally 
Gorenstein CB-scheme. Thus the condition $\HF_{\vartheta_N}(r_\bbX)
=1\ne 0$ yields that $\bbX$ is an arithmetically Gorenstein scheme.
In addition, the Noether different is a principal ideal generated by
$2 x_{0} x_{1} +3 x_{1} x_{3} +2 x_{2} x_{3} -x_{3}^2$ and 
$\ri(\vartheta_N)=3<2r_\bbX$.
\end{enumerate}
\end{example}

\begin{corollary}
If $\bbX\subseteq\bbP^n_K$ is a 0-dimensional locally Gorenstein CB-scheme,
then we have $\HF_{\vartheta_\bbX}(i)=\HF_{\vartheta_N}(i)=0$ for all 
$i < r_\bbX$.
\end{corollary}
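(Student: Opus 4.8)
The plan is to show that for a $0$-dimensional locally Gorenstein CB-scheme $\bbX$, both the Noether different $\vartheta_N$ and the K\"ahler different $\vartheta_\bbX$ vanish in all degrees strictly below $r_\bbX$. The natural order is to handle $\vartheta_N$ first and then deduce the statement for $\vartheta_\bbX$ via the inclusion $\vartheta_N^n \subseteq \vartheta_\bbX \subseteq \vartheta_N$ recalled before Lemma~\ref{lem:LocallyGor-NDiff}; the inclusion $\vartheta_\bbX \subseteq \vartheta_N$ gives $\HF_{\vartheta_\bbX}(i) \le \HF_{\vartheta_N}(i)$, so it suffices to prove $\HF_{\vartheta_N}(i)=0$ for $i<r_\bbX$.

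For $\vartheta_N$, I would argue by contradiction. Suppose $\HF_{\vartheta_N}(i)\ne 0$ for some $i<r_\bbX$, and pick a non-zero homogeneous $f\in(\vartheta_N)_i$. Using the injection $\widetilde{\imath}:R\to S[x_0]$ of (\ref{Equa:ImathMap}) together with the product decomposition $\vartheta_N(S/K)=\prod_{j=1}^s\vartheta_N(\calO_{\bbX,p_j}/K)$ from the discussion after the definition of $\vartheta_N$, the image $\widetilde{\imath}(f)$ has a non-zero component in some factor $\vartheta_N(\calO_{\bbX,p_j}/K)$. Since $\bbX$ is locally Gorenstein, Lemma~\ref{lem:LocallyGor-NDiff}(a) says $\vartheta_N(\calO_{\bbX,p_j}/K)=\Ann_{\calO_{\bbX,p_j}}(\fm_{\bbX,p_j})$, the socle of $\calO_{\bbX,p_j}$. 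Multiplying $f$ by a suitable power of $x_0$ and by an appropriate homogeneous element of $R$, and using that a socle element of $\calO_{\bbX,p_j}$ generates the vanishing ideal of a maximal $p_j$-subscheme (as in the setup before Lemma~\ref{lem:Sep-MaxSubsch}), one produces from $f$ a minimal separator of some maximal $p_j$-subscheme $\bbY$ of $\bbX$ of degree $\deg(f^*_{jk})\le i<r_\bbX$. This forces $\mu_{\bbY/\bbX}<r_\bbX$, hence $\deg_\bbX(p_j)\le\mu_{\bbY/\bbX}<r_\bbX$, contradicting the defining property $\deg_\bbX(p_j)=r_\bbX$ of a CB-scheme.

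The step I expect to be the main obstacle is making precise the passage from ``$f\in(\vartheta_N)_i$ has a non-zero socle component at $p_j$'' to ``$f$ (up to multiplication by units/powers of $x_0$ and by elements of $R$) \emph{is} a minimal separator of a maximal $p_j$-subscheme of degree $<r_\bbX$.'' One needs to be careful that $\widetilde{\imath}(f)=(a_1x_0^{i},\dots,a_sx_0^{i})$ may be supported at several points simultaneously; the argument is to multiply by a homogeneous $g\in R$ that kills all components except the $j$-th (such $g$ exists in degree $\le r_\bbX$ because $\widetilde{\imath}|_{R_k}$ is an isomorphism onto $(S[x_0])_k$ for $k\ge r_\bbX$, cf.\ the discussion after (\ref{Equa:ImathMap})), thereby landing in a single factor, and then invoke \cite[Proposition~3.2]{KL2017} to identify the resulting ideal with $\langle s_j\rangle$ for a socle element $s_j$, giving the separator interpretation. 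Once that identification is in place, the degree bound $\mu_{\bbY/\bbX}\le i<r_\bbX$ is immediate and the contradiction with the CB-property closes the proof; the statement for $\vartheta_\bbX$ then follows formally from $\vartheta_\bbX\subseteq\vartheta_N$.
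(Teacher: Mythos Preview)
Your overall strategy --- prove the vanishing for $\vartheta_N$ first and then deduce it for $\vartheta_\bbX$ via $\vartheta_\bbX\subseteq\vartheta_N$ --- is the right shape, and your use of Lemma~\ref{lem:LocallyGor-NDiff}(a) to see that a nonzero $f\in(\vartheta_N)_i$ has each local component in the socle is correct. The gap is precisely at the step you flagged as the main obstacle, and your proposed fix does not close it. When you multiply $f$ by a homogeneous $g\in R$ that kills all components except the $j$-th, the product $fg$ lands in $(I_{\bbY_j/\bbX})_{i+\deg(g)}$ with $\deg(g)\ge r_\bbX$ in general; writing $\widetilde{\imath}(fg)=(0,\dots,a_j,\dots,0)x_0^{\,i+\deg(g)}$ only yields $\mu(a_j)\le i+\deg(g)$, which is vacuous. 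Nothing in this construction forces $\mu(a_j)\le i$, because the presence of the other (possibly nonzero) components $a_k$ in $\widetilde{\imath}(f)$ prevents you from concluding that $(0,\dots,a_j,\dots,0)x_0^i\in\widetilde{\imath}(R)$. So the asserted bound $\mu_{\bbY/\bbX}\le i$ is unjustified, and no contradiction with the CB-property is obtained along these lines.

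The paper's argument avoids this difficulty by routing through the Dedekind different rather than arguing directly with separators. Since $\bbX$ is locally Gorenstein, $\vartheta_D^\sigma$ is defined for a homogeneous trace map $\sigma$ of degree zero; for CB-schemes one has $\HF_{\vartheta_D^\sigma}(r_\bbX-1)=0$ by \cite[Proposition~4.8]{KLL2019}. The key input is Remark~\ref{rem:N-D-DiffDiagram}: the map $\Psi_\sigma\circ\Theta:\Ann_{R^e}(J)\to\vartheta_D^\sigma$ is a graded isomorphism, so $(\Ann_{R^e}(J))_{r_\bbX-1}=0$, and hence $(\vartheta_N)_{r_\bbX-1}=\mu\big((\Ann_{R^e}(J))_{r_\bbX-1}\big)=0$. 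Vanishing in all degrees $<r_\bbX$ then follows because $x_0$ is a non-zerodivisor. The passage through $\Ann_{R^e}(J)$ is what lets one transfer a \emph{degree-wise} vanishing statement from $\vartheta_D^\sigma$ to $\vartheta_N$ without ever needing to isolate a single component of an element of $\vartheta_N$.
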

\begin{proof}
By the assumption, the Dedekind different $\vartheta_D^\sigma$ 
is well-defined with a homogeneous trace map $\sigma$ of $Q^h(R)/L_o$
of degree zero. By \cite[Proposition~4.8]{KLL2019}, we have 
$\HF_{\vartheta_D^\sigma}(r_\bbX-1)=0$. It follows from 
Remark~\ref{rem:N-D-DiffDiagram} that $\HF_{\Ann_{R^e(J)}}(r_\bbX-1)=0$,
and subsequently $\HF_{\vartheta_N}(r_\bbX-1)=0$. Thus the claim follows 
from $\vartheta_\bbX\subseteq \vartheta_N$.
\end{proof}

Now we are ready to state and prove a characterization of 0-dimensional 
complete intersections in $\bbP^n_K$.

\begin{thm}\label{thm:CharacterizationCI-KDiff}
Let $\bbX$ be a 0-dimensional scheme in $\bbP^n_K$. Then
the following conditions are equivalent:
\begin{enumerate}
\item[(a)] $\bbX$ is a complete intersection.
\item[(b)] $\bbX$ is a locally Gorenstein CB-scheme and 
$\HF_{\vartheta_\bbX}(r_\bbX) \ne 0$.
\item[(c)] $\bbX$ is a CB-scheme and 
$\HP_{\vartheta_\bbX} = \sum_{j=1}^s\dim_K K(p_j)$ and
$\HF_{\vartheta_\bbX}(r_\bbX) \ne 0$.
\end{enumerate}
\end{thm}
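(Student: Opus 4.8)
My plan is to prove the three implications (a)$\Rightarrow$(b), (b)$\Rightarrow$(a) and (b)$\Leftrightarrow$(c), the only substantial one being (b)$\Rightarrow$(a); everything else should follow by stitching together results established earlier in the paper. For (a)$\Rightarrow$(b) I would argue that a complete intersection is automatically a locally complete intersection, hence locally Gorenstein, and is arithmetically Gorenstein, hence a CB-scheme; and writing $I_\bbX=\ideal{F_1,\dots,F_n}$ with $\deg F_j=d_j$, Remark~\ref{remark:3differents} shows that $\vartheta_\bbX$ is generated by the single Jacobian determinant $\tfrac{\partial(F_1,\dots,F_n)}{\partial(x_1,\dots,x_n)}$, which is a non-zero element of $R$ --- otherwise $\vartheta_\bbX=\ideal{0}$, contradicting Proposition~\ref{prop:cipoint-ci}(b) --- of degree $\sum_i(d_i-1)=r_\bbX$, so that $\HF_{\vartheta_\bbX}(r_\bbX)\ne 0$.

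For (b)$\Rightarrow$(a), the idea is to promote the inclusion $\vartheta_\bbX\subseteq\vartheta_N$ (established above) to an equality. Since $\vartheta_\bbX\subseteq\vartheta_N$, the hypothesis gives $\HF_{\vartheta_N}(r_\bbX)\ge\HF_{\vartheta_\bbX}(r_\bbX)\ne 0$, so the locally Gorenstein CB-scheme $\bbX$ satisfies condition~(c) of Proposition~\ref{CharGorNoetDiff}; hence $\bbX$ is arithmetically Gorenstein and $\vartheta_N=\ideal{g}$ for a non-zero homogeneous element $g$ of degree $r_\bbX$. Because $g$ is homogeneous of degree $r_\bbX$, every homogeneous element of $Rg$ of degree $r_\bbX$ lies in $R_0\,g=Kg$, so $\HF_{\vartheta_N}(r_\bbX)=1$; then $0\ne(\vartheta_\bbX)_{r_\bbX}\subseteq(\vartheta_N)_{r_\bbX}=Kg$ forces $(\vartheta_\bbX)_{r_\bbX}=Kg$, i.e. $g\in\vartheta_\bbX$, and therefore $\vartheta_\bbX=\ideal{g}=\vartheta_N$. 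Reducing modulo $\ideal{x_0}$ and invoking Remark~\ref{remark:3differents} yields $\vartheta_\bbX(\overline{R}/K)=\vartheta_N(\overline{R}/K)$, which is non-zero by Lemma~\ref{lem:LocallyGor-NDiff}(b) since $\bbX$ is arithmetically Gorenstein; Proposition~\ref{prop:cipoint-ci}(b) then delivers that $\bbX$ is a complete intersection.

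Finally, (b)$\Leftrightarrow$(c) should be routine: by Proposition~\ref{prop_HF-Kdiff}, $\HP_{\vartheta_\bbX}=\sum_{p_j\in\bbX_{\rm ci}}\dim_K K(p_j)$, so the equality $\HP_{\vartheta_\bbX}=\sum_{j=1}^s\dim_K K(p_j)$ is equivalent to $\bbX$ being a locally complete intersection, which in particular forces $\bbX$ to be locally Gorenstein --- this gives (c)$\Rightarrow$(b); and (b)$\Rightarrow$(c) follows because (b)$\Rightarrow$(a) makes $\bbX$ a complete intersection, hence a locally complete intersection, while the two remaining conditions in (c) are part of (b). I expect the only real obstacle to be the collapse $\vartheta_\bbX=\vartheta_N$ inside the step (b)$\Rightarrow$(a): it relies crucially on knowing from Proposition~\ref{CharGorNoetDiff} not merely that $\vartheta_N$ is principal but that its generator sits in the extremal degree $r_\bbX$, so that the one-dimensional slot $(\vartheta_N)_{r_\bbX}$ is already occupied by an element of $\vartheta_\bbX$; once that is in hand, the complete-intersection test on $\vartheta_\bbX(\overline{R}/K)$ is just the arithmetically Gorenstein test on $\vartheta_N(\overline{R}/K)$.
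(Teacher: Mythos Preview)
Your proof is correct and structurally matches the paper's, but your argument for (b)$\Rightarrow$(a) is actually cleaner than the paper's own. The paper proceeds, as you do, by deducing from Proposition~\ref{CharGorNoetDiff} that $\bbX$ is arithmetically Gorenstein with $\vartheta_N=\ideal{h}$ for some nonzero $h\in R_{r_\bbX}$; it then spends most of the proof performing an explicit trace-map computation (writing $h=\mu(\Delta_{\tilde\sigma})=\sum_i\mathrm{Tr}_{R/K[x_0]}(b_i)\,b'_i$ with respect to a homogeneous $K[x_0]$-basis and its $\tilde\sigma$-dual) to verify that $h\notin\ideal{x_0}$, and only then concludes $\vartheta_\bbX(\overline R/K)\ne 0$. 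You bypass this calculation entirely: having established $\vartheta_\bbX=\vartheta_N$ by the one-dimensional pigeonhole in degree $r_\bbX$, you simply quote Lemma~\ref{lem:LocallyGor-NDiff}(b) to get $\vartheta_N(\overline R/K)\ne 0$, which is exactly the content of the paper's trace computation. In effect the paper reproves, in situ, the forward implication of Lemma~\ref{lem:LocallyGor-NDiff}(b) rather than invoking it; your route is shorter and loses nothing. Note also that your dimension-count $(\vartheta_\bbX)_{r_\bbX}=(\vartheta_N)_{r_\bbX}=Kg$ is a step the paper leaves implicit when passing from ``$h\notin\ideal{x_0}$'' to ``$\vartheta_\bbX(\overline R/K)\ne 0$''.
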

\begin{proof}
We first prove the equivalence of (a) and (b).
If $\bbX$ is a complete intersection, then it is also
an arithmetically Gorenstein scheme. By Proposition~\ref{CharGorNoetDiff}, 
$\bbX$ is a locally Gorenstein CB-scheme and the Noether different 
$\vartheta_N$ satisfies $\HF_{\vartheta_N}(r_\bbX)\ne 0$. 
In this case, we have $\vartheta_\bbX = \vartheta_N$ by 
\cite[G.3 and 10.2]{Kun1986}. Hence $\HF_{\vartheta_\bbX}(r_\bbX) \ne 0$.

Conversely, the condition $\HF_{\vartheta_\bbX}(r_\bbX) \ne 0$ implies 
$\HF_{\vartheta_N}(r_\bbX)\ne 0$, as $\vartheta_\bbX \subseteq\vartheta_N$.  
So, it follows from Proposition~\ref{CharGorNoetDiff} that $\bbX$ is
an arithmetically Gorenstein scheme.
In particular, $\vartheta_N$ is a principal ideal generated 
by a non-zero homogeneous element $h$ of degree $r_\bbX$. More precisely,
as in the proof of Proposition~\ref{CharGorNoetDiff} we have 
$h= \mu(\Delta_{\tilde{\sigma}})$, where $\tilde{\sigma}$ is a homogeneous
trace map of degree $-r_\bbX$ of the algebra~$R/K[x_0]$, 
and where $\Delta_{\tilde{\sigma}} = \sum_{i=1}^d b'_i\otimes b_i$ 
with a homogeneous $K[x_0]$-basis $\{b_1,\dots,b_d\}$ of $R$ and its dual
basis $\{b'_1,...,b'_d\}$ with respect to ${\tilde{\sigma}}$, $d=\deg(\bbX)$.
Here we may assume that $b_1 = 1$. Now we need to verify that 
$\mu(\Delta_{\tilde{\sigma}}) \notin \ideal{x_0} \subseteq R$.
Let ${\rm Tr}_{R/K[x_0]}$ be the canonical trace map of $R/K[x_0]$.
Then ${\rm Tr}_{R/K[x_0]}=\mu(\Delta_{\tilde{\sigma}})\cdot\tilde{\sigma}$.
Also, it is straightforward to check that 
$(\sum_{i=1}^d {\rm Tr}_{R/K[x_0]}(b_i)b'_i)\cdot\tilde{\sigma}
={\rm Tr}_{R/K[x_0]}$. 
It follows that
$$
\mu(\Delta_{\tilde{\sigma}}) =
\sum_{i=1}^d {\rm Tr}_{R/K[x_0]}(b_i)b'_i
= d\cdot b'_1 + \sum_{i=2}^{d} {\rm Tr}_{R/K[x_0]}(b_i)b'_i
$$
Clearly, we have $\deg(b'_1)=\deg(\mu(\Delta_{\tilde{\sigma}}))=r_\bbX$
and $d\ne 0$, as ${\rm char}(K)\nmid d$.
Note that the residue classes of $b'_1,...,b'_d$ also form a homogeneous
$K$-basis of $\overline{R} = R/\ideal{x_0}$.  In $\overline{R}$ 
we have
$$
\overline{\mu(\Delta_{\tilde{\sigma}})}
= d\cdot \overline{b'_1} +
\sum_{i=2}^{d} \overline{{\rm Tr}_{R/K[x_0]}(b_i)}\overline{b'_i}
\ne 0
$$
Hence we get $\vartheta_\bbX(\overline{R}/K)= 
\vartheta_\bbX/\ideal{x_0} \ne 0$, and consequently an application of
Proposition~\ref{prop:cipoint-ci}.b yields that the scheme $\bbX$
is a complete intersection, as desired.

Finally, the equivalence of (a) and (c) follows by Corollary~\ref{cor:LocallyCI} and from the above argument and
the fact that every locally complete intersection is locally Gorenstein.
\end{proof}

\begin{remark}
When the scheme $\bbX$ is a smooth 0-dimensional scheme, our result recovers 
a result of~\cite{KL2017}.
\end{remark}

We end this section with an application of the theorem to
a concrete example.

\begin{example}\label{twocubics}
Let $\bbX$ be the 0-dimensional scheme in $\bbP^2_\bbQ$ defined by
the ideal $I_\bbX =I_{p_1}\cap I_{p_2}\cap I_{p_3}\cap I_{p_4}$,
where $I_{p_1} = \ideal{X_1, X_2}$, $I_{p_2}=\ideal{X_1-4X_2, X_2-2X_0}$,
$I_{p_3}= \ideal{X_1 -3X_2-2X_0, (X_2-X_2)^2}$, and 
$$
I_{p_4} =\ideal{X_0^2X_1-X_2^3, X_1^2-X_0X_2-X_2^2, 
X_0^3+X_0^2X_2-X_1X_2^2}
$$
Here $p_1,p_2$ are $\bbQ$-rational points, $p_4$ is a reduced point, 
while $p_3$ is a non-reduced point.
We have $\HF_\bbX:\ 1\ 3\ 6\ 8\ 9\ 9\cdots$, $r_\bbX=4$,
$\HF_{\vartheta_N}=\HF_{\vartheta_\bbX}:\ 0\ 0\ 0\ 0\ 1\ 3\ 6\ 8\ 8\cdots$
and $\ri(\vartheta_N) = \ri(\vartheta_\bbX)=7$.
Since $\HP_{\vartheta_N}=8=\sum_{j=1}^4\dim_\bbQ \bbQ(p_j)$, the scheme 
$\bbX$ is locally Gorenstein. Moreover, we have 
$\deg_\bbX(p_j)=4$ for $j=1,...,4$, and so $\bbX$ is a CB-scheme.
Therefore, by Theorem~\ref{thm:CharacterizationCI-KDiff},
$\HF_{\vartheta_\bbX}(r_\bbX)=1\ne 0$ implies that $\bbX$ is a
complete intersection. 
\end{example}

\subsection*{Acknowledgments.}
The author thanks Martin Kreuzer and Lorenzo Robbiano for their encouragement
to elaborate some results presented here.

\bigbreak


\begin{thebibliography}{10}

\bibitem{Bas1963}
H. Bass, On the ubiquity of Gorenstein rings,
Math. Z. \textbf{82} (1963), 8--28.

\bibitem{DGO1985}
E. Davis, A.V. Geramita, F. Orecchia,
Gorenstein algebras and the Cayley-Bacharach theorem,
Proc. Am. Math. Soc. \textbf{93} (1985), 593--597.

\bibitem{DK1999}
G.\ de Dominicis, M.\ Kreuzer, K\"ahler differentials for
points in~$\bbP^n$, J.\ Pure Appl.\ Alg.\ \textbf{141} (1999), 153--173.

\bibitem{DM1984} E. Davis, P. Maroscia,
Complete intersections in $\mathbb{P}^2$, 
In: Complete Intersections - Acireale 1983, 
In: Lecture Notes in Mathematics, vol. 1092 (1984), 253--269.

\bibitem{EU2019} D. Eisenbud, B. Ulrich, 
Duality and socle generators for residual intersections,
J. Reine Angew. Math. \textbf{756} (2019), 183--226.

\bibitem{GH1978} P. Griffiths, J. Harris,
Residues and zero-cycles on algebraic varieties,
Ann. Math. \textbf{108} (1978), 461--505.

\bibitem{GKR1993} A.V. Geramita, M. Kreuzer, L. Robbiano,
Cayley-Bacharach schemes and their canonical modules,
Trans.\ Amer.\ Math.\ Soc.\ \textbf{339} (1993), 163--189.

\bibitem {GM1984}
A.V. Geramita, P. Maroscia,
The ideal of forms vanishing at a finite set of points in $\mathbb{P}^n$,
J. Algebra \textbf{90} (1984), 528--555.

\bibitem{GMT2010}
E. Guardo, L. Marino, A. Van Tuyl, Separators of fat
points in~$\bbP^n$, J.\ Pure Appl.\ Alg.\ \textbf{324} (2010), 1492--1512.

\bibitem{GO1981}
A.V.~Geramita, F. Orecchia,
On the Cohen-Macaulay type of $s$-lines in $\mathbb{A}^{n+1}$,
J. Algebra \textbf{70} (1981), 116--140.

\bibitem{HT2004}
H.T. Ha, A. Van Tuyl,
The regularity of points in multi-projective spaces,
J. Pure Appl. Algebra \textbf{187} (2004), 153--167.

\bibitem{IT2021}
S.B. Iyengar, R. Takahashi,
The Jacobian ideal of a commutative ring and
annihilators of cohomology,
J. Algebra \textbf{571} (2021), 280--296.

\bibitem {Kre1992}
M.\ Kreuzer, On $0$-dimensional complete intersections,
Math. Ann. \textbf{292} (1992), 43--58.


\bibitem{KK1987}
M. Kreuzer, E. Kunz,
Traces in strict Frobenius algebras and strict
complete intersections,
J. Reine Angew. Math. \textbf{381} (1987), 181--204.

\bibitem{KL2017} M.\ Kreuzer, L.N.~Long,
Characterizations of zero-dimensional complete intersections,
Beitr\"{a}ge Algebra Geom.\ \textbf{58} (2017), 93--129.

\bibitem {KLL2015}
M.\ Kreuzer, T.N.K.\ Linh, L.N.\ Long,
K{\"a}hler differentials and K{\"a}hler differents
for fat point schemes,
J. Pure Appl. Algebra \textbf{219} (2015), 4479--4509.

\bibitem{KLL2019} M.\ Kreuzer, T.N.K. Linh, L.N.\ Long,
On the Dedekind different of a Cayley-Bacharach scheme,
J. Algebra Appl. (2019), 523--553.

\bibitem{KLLT2020} M.\ Kreuzer, T.N.K. Linh, L.N.\ Long, T.C. Nguyen,
An application of Liaison theory to zero-dimensional schemes,
Taiwanese J. Math. \textbf{24} (2020), 553-573.

\bibitem {KLR2019}
M.\ Kreuzer, L.N.\ Long, L.\ Robbiano,
On the Cayley-Bacharach property,
Comm. Algebra, \textbf{47} (2019), 328--354.

\bibitem {KR2005}
M.~Kreuzer, L.~Robbiano,
Computational Commutative Algebra 2,
Springer-Verlag, Heidelberg 2005.

\bibitem {KS2016}
Y.R. Kim, Y.S. Shin,
Star-configurations in $\bbP^n$ and the Weak-Lefschetz property,
Comm. Algebra, \textbf{44} (2016), 3853--3873.

\bibitem {Kun1960}
E. Kunz, Die Primidealteiler der Differenten in allgemainen Ringen,
J. Reine Angew. Math. \textbf{204} (1960), 165–-182.

\bibitem {Kun1986}
E. Kunz, K\"{a}hler Differentials,
Adv. Lectures Math., Vieweg Verlag, Braunschweig, 1986.

\bibitem {Noe1950}
E. Noether, Idealdifferentiation und Differente,
J. Reine Angew. Math. \textbf{188} (1950), 1--21.

\bibitem{SS1975}
G. Scheja, U. Storch, \"Uber Spurfunktion bei vollst\"andigen 
Durschnitten,  J. Reine Angew. Math.
\textbf{278}(279) (1975), 174–-190. 

\bibitem {ApC}
The ApCoCoA Team,
ApCoCoA: Applied Computations in Commutative Algebra,
available at \url{http://apcocoa.uni-passau.de}.
\end{thebibliography}
\end{document}